\newcounter{cnt}
\def\mydggeometry{\makeatletter\dg@YGRID=1\dg@XGRID=20\unitlength=0.003pt\makeatother}
\makeatother \theoremstyle{remark}
\numberwithin{equation}{section}
\theoremstyle{definition} 
\newtheorem{definition}{Definition}\theoremstyle{definition}
\newtheorem{proposition}{Proposition}
\newtheorem{theorem}{Theorem}
\newtheorem{lemma}{Lemma}
\newtheorem*{lemma*}{Lemma}
\newtheorem{corollary}{Corollary}
\newtheorem{remark}{Remark}
\newtheorem{example}{Example}
\newtheorem{conjecture}{Conjecture}
\newtheorem*{conjecture*}{Conjecture}
\newtheorem*{thma}{Theorem A}
\newtheorem*{thmb}{Theorem B}
\newcommand{\g}{\mathfrak{g}}
\newcommand{\n}{\mathfrak{n}}
\newcommand{\ra}{\rightarrow}
\newcommand{\vp}{\varphi}
\newcommand{\ts}{\otimes}
\newcommand{\ff}{\mathcal{F}}
\newcommand{\mc}{\mathbb{C}}
\newcommand{\md}{\mathcal{D}}
\newcommand{\ms}{\mathcal{S}}
\newcommand{\gr}{\operatorname*{gr}}
\newcommand{\lie}{\mathfrak}
\newcommand{\ddeg}{\operatorname*{deg}}
\newcommand{\sspan}{\operatorname*{span}}
\newcommand{\diag}{\operatorname*{diag}}
\newcommand{\Mat}{\operatorname*{Mat}}
\newcommand{\lm}{\operatorname*{lm}}
\newcommand{\gm}{\operatorname*{gm}}
\newcommand{\bs}{\mathbf{s}}
\newcommand{\bd}{\mathbf{d}}
\newcommand{\bt}{\mathbf{t}}
\newcommand{\Dwq}{\md_{\underline{w}_0}^q}
\newcommand{\N}{\mathbb{N}}
\begin{document}


\title{Degree cones and monomial bases of Lie algebras and quantum groups}

\author{Teodor Backhaus, Xin Fang and Ghislain Fourier}
\address{\newline T.B., X.F., Mathematisches Institut, Universit\"at zu K\"oln, Germany} 
\email{tbackha@math.uni-koeln.de}
\email{xinfang.math@gmail.com}
\address{\newline G.F., School of Mathematics and Statistics, University of Glasgow, UK}
\email{ghislain.fourier@glasgow.ac.uk}

\begin{abstract}
We provide $\mathbb{N}$-filtrations on the negative part $U_q(\mathfrak{n}^-)$ of the quantum group associated to a finite-dimensional simple Lie algebra $\mathfrak{g}$, such that the associated graded algebra is a skew-polynomial algebra on $\mathfrak{n}^-$. The filtration is obtained by assigning degrees to Lusztig's quantum PBW root vectors. The possible degrees can be described as lattice points in certain polyhedral cones. In the classical limit, such a degree induces an $\mathbb{N}$-filtration on any finite dimensional simple $\mathfrak{g}$-module. We prove for type $\tt{A}_n$, $\tt{C}_n$, $\tt{B}_3$, $\tt{D}_4$ and $\tt{G}_2$ that a degree can be chosen such that the associated graded modules are defined by monomial ideals, and conjecture that this is true for any $\mathfrak{g}$.
\end{abstract}

\maketitle
\section{Introduction}

\subsection{Motivation}

Let $\g$ be a finite dimensional simple Lie algebra with a triangular decomposition $\g=\mathfrak{n}^+\oplus\mathfrak{h}\oplus\mathfrak{n}^-$. Let $U(\lie n^-)$ be the corresponding universal enveloping algebra. Lusztig \cite{Lus1} introduced the canonical basis for the quantized enveloping algebra $U_q(\lie n^-)$. Subsequently, Kashiwara \cite{Kas} gave a different construction of this basis under the name global crystal basis. When the quantum parameter is specialized to $1$, the canonical basis is specialized to a linear basis $\mathcal{B}$ of $U(\lie n^-)$. Let $V(\lambda)$ be the finite dimensional irreducible representation of $U(\g)$ of highest weight $\lambda$ and $v_\lambda$ be a fixed highest weight vector. The canonical basis $\mathcal{B}$ of $U(\lie n^-)$ induces a canonical basis of $V(\lambda)$ by:
$$\mathcal{B}_\lambda=\{b\cdot v_\lambda\mid b\in\mathcal{B},\ b\cdot v_\lambda\neq 0\}.$$
This important property distinguishes the canonical basis from other bases of $U(\lie n^-)$ and $V(\lambda)$.
\par
In this paper, motivated by \cite{FFR15, FFL1, FFL2}, we are interested in the existence of monomial bases $\mathcal{E}$ of $U(\lie n^-)$ satisfying the following properties:
\begin{enumerate}[(P1)]
\item there exists an $\mathbb{N}$-filtration $\ff$ on $U(\lie n^-)$ such that the associated graded algebra is the polynomial algebra $S(\lie n^-)$; the set $\mathcal{E}$ is a linear basis of the associated graded algebra;
\item let $V^\ff(\lambda)$ be the associated graded $S(\lie n^-)$-module with cyclic vector $v_\lambda^\ff$, 
$$\mathcal{E}_\lambda=\{b\cdot v_\lambda^\ff\mid b\in\mathcal{E},\ b\cdot v_\lambda^\ff\neq 0\}$$
is a linear basis of $V^\ff(\lambda)$, hence a linear basis of $V(\lambda)$.
\end{enumerate}
An equivalent formulation of (P2) in terms of monomial ideals is:
\begin{enumerate}[(P2')]
\item find an $\mathbb{N}$-filtration $\ff$ on $U(\lie n^-)$ such that for any dominant integral weight $\lambda$, the defining ideal of $\operatorname{gr}^{\mathcal{F}}V(\lambda)$ is monomial.
\end{enumerate}

By turning back to quantum groups, we may ask similar questions:
\begin{enumerate}[(Q1)]
\item Is there an $\mathbb{N}$-filtration of $U_q(\lie n^-)$ such that the associated graded algebra is $S_q(\lie n^-)$ (a skew-polynomial algebra on the vector space $\lie n^-$).
\end{enumerate}

\subsection{Answering (Q1)}
The answer to (P1) is rather trivial. We fix a weighted basis of $\lie n^-$, indexed by positive roots $\Delta^+$, and let $\mathbf{d}: \mathfrak{n}^- \longrightarrow \mathbb{N}$ be a degree function on $\lie{n}^-$ such that for any basis elements $x,y \in \lie n^-$:  \[ 
\mathbf{d}(x) + \mathbf{d}(y) > \mathbf{d}([x,y]).\] This induces an $\mathbb{N}$-filtration on $U(\lie n^-)$ and the induced associated graded algebra is isomorphic to $S(\lie n^-)$. We denote $\mathcal{D}$, called the \emph{classical degree cone}, the real cone generated by all degree functions on $\mathfrak{n}^-$ (respectively $\Delta^+$) satisfying these inequalities.
\par
To construct an $\mathbb{N}$-filtration on $U_q(\lie n^-)$, it is not enough to consider its Chevalley generators $F_1, \ldots, F_n$, since $U_q(\lie n^-)$ is already a graded algebra for any grading on these generators, and the defining ideals of simple modules are seldom monomial.
\par
There is another basis of $U_q(\lie n^-)$ given by Lusztig \cite{Lus2}, called quantum PBW basis. Let $\underline{w}_0=s_{i_1} \ldots s_{i_N}$ be a reduced decomposition of the longest Weyl group element. We associate a sequence of elements $F_{\beta_1}, \ldots, F_{\beta_N} \in U_q(\lie n^-)$, where $\{ \beta_1, \ldots, \beta_N\}$ is the set of positive roots and $F_{\beta_i}$ is a quantum PBW root vector of weight $-\beta_i$. 
Then Lusztig has shown that ordered monomials in $F_{\beta_1}, \ldots, F_{\beta_N}$ form a linear basis of $U_q(\lie n^-)$.
\par
The naive approach of setting the degree of $F_{\beta_i}$ to $1$ for all $\beta_i$ does not provide  $\operatorname{gr } U_q(\lie n^-) \cong S_q(\lie n^-)$ for the induced filtration:

\begin{example}
 Let $\g=\mathfrak{sl}_4$ be of type $\tt A_3$. Fix the reduced decomposition $\underline{w}_0=s_1s_2s_1s_3s_2s_1$ of the longest element $w_0$ in the Weyl group of $\g$. We denote by $F_{i \ i+1\cdots k}$ the quantum PBW root vector corresponding to the root $-(\alpha_i+\alpha_{i+1}+\cdots+\alpha_k)$. The following relation holds in $U_q(\mathfrak{n}^-)$:
$$F_{23}F_{12}=F_{12}F_{23}-(q-q^{-1})F_2F_{123}.$$
\end{example}

In general, the commutation relations in $U_q(\lie n^-)$ are given by the following Levendorskii-Soibelman (L-S for short) formula: for any $i<j$: 
\begin{equation*}
F_{\beta_j}F_{\beta_i}-q^{-(\beta_i,\beta_j)}F_{\beta_i}F_{\beta_j}=\sum_{n_{i+1},\ldots,n_{j-1}\geq 0}c(n_{i+1},\ldots,n_{j-1})F_{\beta_{i+1}}^{n_{i+1}}\ldots F_{\beta_{j-1}}^{n_{j-1}}.
\end{equation*}
These commutation relations depend heavily on the choice of reduced decomposition $\underline{w}_0$. For a given reduced decomposition $\underline{w}_0$, we seek for degree functions on the set of positive roots
\[
\mathbf{d}: \Delta_+ \longrightarrow \mathbb{N}
\]
such that letting $\deg(F_\beta)=\bold{d}(\beta)$ for $\beta\in\Delta_+$ defines a filtered algebra structure on $U_q(\lie n^-)$ and the associated graded algebra satisfies: $\operatorname{gr}^\mathbf{d} U_q(\lie n^-) \cong S_q(\lie n^-)$. Inspired by the L-S formula, we define for any reduced decomposition $\underline{w}_0$ the \emph{quantum degree cone} $\md^q_{\underline{w}_0}$ by:
\[
\md^q_{\underline{w}_0}:=\{(d_\beta)\in\mathbb{R}_+^{|\Delta_+|}\ |\ \text{for any } i<j, d_{\beta_i}+d_{\beta_j}>\sum_{k=i+1}^{j-1}n_kd_{\beta_k} \text{ if } c(n_{i+1},\ldots,n_{j-1})\neq 0\}.
\]
The first main theorem of this paper is:
\begin{thma}
Let $\underline{w}_0$ be a reduced decomposition. Then
\begin{enumerate}
\item the set $\md_{\underline{w}_0}^q$ is a non-empty, open polyhedral cone;
\item a degree function $\mathbf{d}: \Delta^+ \longrightarrow \mathbb{N}$ defines a filtered algebra structure on $U_q(\lie n^-)$ such that $\operatorname{gr}^\mathbf{d} U_q(\lie n^-) \cong S_q(\lie n^-)$ if and only if $\mathbf{d} \in \md^q_{\underline{w}_0} \cap \mathbb{Z}^N.$
\end{enumerate}
\end{thma}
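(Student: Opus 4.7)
The plan for part (1) is straightforward: $\md_{\underline{w}_0}^q$ is defined by finitely many strict linear inequalities in the positive orthant $\mathbb{R}_+^N$, so it is tautologically an open polyhedral cone, and the only content is non-emptiness. To exhibit an explicit integer point I would set $d_{\beta_k}:= M^{N-k}$ for an integer $M$ to be chosen. For each L--S constraint
\[
d_{\beta_i} + d_{\beta_j} > \sum_{k=i+1}^{j-1} n_k d_{\beta_k},
\]
the right-hand side is bounded by $\bigl(\sum_k n_k\bigr)\cdot M^{N-i-1}$ while the left-hand side is at least $M^{N-i}$. Since the weight equality $\sum_k n_k \beta_k = \beta_i + \beta_j$ forces $\sum_k n_k \leq \mathrm{ht}(\beta_i) + \mathrm{ht}(\beta_j)$, a bound uniform in $\g$, any sufficiently large $M$ produces a point in $\md_{\underline{w}_0}^q \cap \mathbb{N}^N$.

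For the ``if'' direction of (2), given $\bd \in \md_{\underline{w}_0}^q \cap \mathbb{N}^N$ I define $U_q(\lie n^-)^{(\leq m)}$ to be the span of PBW monomials $F_{\beta_1}^{a_1}\cdots F_{\beta_N}^{a_N}$ with $\sum_l a_l d_{\beta_l} \leq m$. To verify that the resulting family is an algebra filtration I would argue by induction on the number of out-of-order adjacent pairs that any product of PBW monomials lies in the appropriate filtration piece; the base case is a single L--S commutation $F_{\beta_j}F_{\beta_i}$ (with $i<j$), whose leading term $q^{-(\beta_i,\beta_j)}F_{\beta_i}F_{\beta_j}$ has degree exactly $d_{\beta_i}+d_{\beta_j}$ while all correction terms have strictly smaller degree by the cone condition. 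Passing to $\gr^\bd U_q(\lie n^-)$ the correction terms vanish, leaving the skew relations $F_{\beta_j}F_{\beta_i} = q^{-(\beta_i,\beta_j)}F_{\beta_i}F_{\beta_j}$, which are exactly the defining relations of $S_q(\lie n^-)$; this yields a surjection $S_q(\lie n^-)\twoheadrightarrow \gr^\bd U_q(\lie n^-)$, which is an isomorphism after matching Hilbert series via the PBW basis.

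For the ``only if'' direction, suppose $\bd\in\mathbb{N}^N$ yields a filtered algebra structure on $U_q(\lie n^-)$ and that under the natural identification of PBW generators $\gr^\bd U_q(\lie n^-) \cong S_q(\lie n^-)$. Compatibility of the filtration with multiplication forces $\sum_k n_k d_{\beta_k} \leq d_{\beta_i}+d_{\beta_j}$ for every tuple with $c(n_{i+1},\ldots,n_{j-1})\neq 0$. If equality occurred for some tuple, the correction term $c\cdot F_{\beta_{i+1}}^{n_{i+1}}\cdots F_{\beta_{j-1}}^{n_{j-1}}$ would survive in the associated graded and produce a non-vanishing element $F_{\beta_j}F_{\beta_i}-q^{-(\beta_i,\beta_j)}F_{\beta_i}F_{\beta_j}\neq 0$ in $\gr^\bd U_q(\lie n^-)$, contradicting the skew-polynomial relations of $S_q(\lie n^-)$. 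Hence strict inequality must hold and $\bd \in \md_{\underline{w}_0}^q$.

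The principal obstacle is the non-emptiness in (1): naive choices of degree function (constants, heights, or linear combinations thereof) fail to dominate the L--S correction terms, and one must exhibit a geometrically growing function like $M^{N-k}$ together with the uniform bound on $\sum_k n_k$ coming from the weight constraint. Once that is in place, the remaining arguments are essentially bookkeeping on the PBW basis via the L--S formula combined with a standard Hilbert-series / dimension comparison.
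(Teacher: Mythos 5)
Your proposal is correct, and for part (2) it follows essentially the same route as the paper (which simply declares the filtered-algebra structure and the isomorphism $\gr^\bd U_q(\lie n^-)\cong S_q(\lie n^-)$ ``clear from the L--S formula''; your straightening induction plus the PBW-basis dimension count is the intended argument, and your treatment of the unstated ``only if'' direction --- equality in some constraint would leave a surviving correction term violating the skew relations --- is also the natural one). The only genuine divergence is in the non-emptiness of $\md^q_{\underline{w}_0}$. The paper constructs a point greedily along the convex order: it sets $d_{\beta_1}=1$ and then $d_{\beta_{k+1}}=1+\max\deg_{\bd}(F^{\bt})$ over all correction monomials $F^{\bt}\in M^q_{k+1}=\bigcup_{s\le k}M^q_{s,k+1}$, which is well defined because those monomials involve only $F_{\beta_{s+1}},\dots,F_{\beta_k}$; the later root then dominates each inequality. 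You instead take the explicit geometric assignment $d_{\beta_k}=M^{N-k}$, so the \emph{earlier} root dominates, and you control the correction terms uniformly via $\sum_k n_k\le \mathrm{ht}(\beta_i)+\mathrm{ht}(\beta_j)\le 2\,\mathrm{ht}(\theta)$ coming from the weight constraint defining $M_{i,j}$. Both constructions exploit the same structural fact --- that the L--S correction terms are supported on roots strictly between $\beta_i$ and $\beta_j$ in the convex order --- but yours has the advantage of being closed-form and independent of knowing which coefficients $c(n_{i+1},\dots,n_{j-1})$ actually vanish, while the paper's greedy point is typically much smaller. One small piece of bookkeeping you should make explicit in the ``if'' direction: when straightening, the leading term of a swap has the same degree but one fewer inversion, whereas the correction terms have strictly smaller degree but a possibly different inversion count, so the induction should run on the pair (degree, number of inversions) ordered lexicographically rather than on inversions alone.
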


It is natural to ask whether there is a uniform degree function $\mathbf{d}$ which is compatible with every reduced decomposition. We will show that for a simple Lie algebra $\lie g$, such a function exists if and only if the rank of $\lie g$ is less or equal than $2$, i.e., for any $\lie g$ of rank larger than $2$, we have
$$ \bigcap_{\underline{w}_0\in R(w_0)} \mathcal{D}_{\underline{w}_0}^q = \emptyset,$$
where $R(w_0)$ is the set of all reduced decompositions of $w_0$.
\par
Suppose $\lie g$ is of simply-laced type and the reduced decomposition is adapted to an orientation of the associated Dynkin quiver. Using the Hall algebra realization of $U_q(\lie n^-)$, the coordinates of the lattice points in the quantum degree cone have an interpretation as dimensions of certain homomorphism spaces for the particular Dynkin quiver \cite{FFR15}. 

\subsection{Answering (P2')}
We turn from the quantum situation to the classical one and analyze the implication of the induced filtration for finite-dimensional simple modules.
\par
Let $V(\lambda)$ be the simple module of highest weight $\lambda$. Since $V(\lambda) = U(\lie n^-)\cdot v_\lambda$, any filtration on $U(\lie n^-)$ induces a filtration on $V(\lambda)$.
\par
Let $\mathbf{d}:\Delta_+\ra\mathbb{N}$ be a degree function for $U(\lie n^-)$ such that $\operatorname{gr}^\mathbf{d} U(\lie n^-) \cong S(\lie n^-)$. The associated graded module $\operatorname{gr}^\mathbf{d} V(\lambda)$ of the induced filtration is a  cyclic $ S(\lie n^-)$-module. Hence there exists an ideal $I^\bold{d}(\lambda) \subset S(\lie n^-)$ such that $\operatorname{gr}^\mathbf{d} V(\lambda) \cong S(\lie n^-)/I^\bold{d}(\lambda)$.
\par
Our second aim of the paper is to find monomial bases of $\operatorname{gr}^\mathbf{d} V(\lambda)$. If the ideal $I^\bold{d}(\lambda)$ is monomial, there exists a unique monomial basis for $\operatorname{gr}^\mathbf{d} V(\lambda)$. We will focus on this case in the paper. 
\par
The \textit{global monomial set} $\mathcal{S}_{\gm}$ consists of all degree functions $\bold{d}:\Delta_+ \ra \mathbb{N}$ such that for any dominant integral weight $\lambda$, $I^\bold{d}(\lambda)$ is a monomial ideal. As the other main result of this paper, all monomial bases appearing in the context of PBW filtration in the literature can be actually obtained through a degree in the global monomial set. 
\begin{thmb}
Let $\g$ be a simple Lie algebra of type $\tt A_n$, $\tt C_n$, $\tt B_3$, $\tt D_4$ or $\tt G_2$. Then $\ms_{\gm}\neq\emptyset$.
\end{thmb}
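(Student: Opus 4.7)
The plan is to proceed type by type; for each of $\tt A_n$, $\tt C_n$, $\tt B_3$, $\tt D_4$, $\tt G_2$ I would exhibit an explicit integer degree function $\mathbf{d}\colon\Delta^+\to\N$, check that it lies in the classical degree cone $\md$ (so that $\gr^\mathbf{d}U(\lie n^-)\cong S(\lie n^-)$), and then prove that $I^\mathbf{d}(\lambda)\subset S(\lie n^-)$ is a monomial ideal for every dominant integral $\lambda$. The containment $\mathbf{d}\in\md$ is always a finite check against bracket relations in $\lie n^-$, so the real content sits in the monomial-ideal statement.

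For types $\tt A_n$ and $\tt C_n$ the natural candidate is the degree function extracted from the PBW-type bases of Feigin--Fourier--Littelmann. Concretely, I would read off $\mathbf{d}(\beta)$ from the maximum multiplicity with which a PBW root vector $F_\beta$ can appear in an FFL monomial indexed by lattice points of the fundamental-weight polytope, possibly after a small perturbation so that $\mathbf{d}$ sits in the interior of $\md$. The FFL monomials then appear exactly as the standard-monomial basis of $\gr^\mathbf{d}V(\lambda)$, and the Minkowski-sum decomposition of the FFL polytope of $V(\lambda)$ into fundamental-weight pieces (a theorem of [FFL1, FFL2]) reduces the monomial-ideal property to the case $\lambda=\omega_i$, where it follows from the explicit PBW presentation proved there.

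For the small exceptional types $\tt B_3$, $\tt D_4$, $\tt G_2$ the approach becomes explicit computation: fix a convenient reduced decomposition of $w_0$, make an ansatz for $\mathbf{d}$ suggested by the structure of $\n^-$, and for each of the finitely many fundamental weights (three, four and two respectively) produce a monomial basis of $V(\omega_i)$ indexed by lattice points of a suitable polytope. Extension from fundamental weights to general $\lambda$ then requires either a Minkowski-sum property for these polytopes, or --- failing that --- a direct argument using tensor product decompositions and Joseph-type surjections $V(\omega_i)\otimes V(\mu)\twoheadrightarrow V(\omega_i+\mu)$ to upgrade the fundamental-weight basis to arbitrary highest weight.

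The main obstacle throughout is showing that $I^\mathbf{d}(\lambda)$ is genuinely monomial, not merely that $\gr^\mathbf{d}V(\lambda)$ admits a monomial spanning set. The standard device is a dimension sandwich: from PBW straightening applied to $v_\lambda^\ff$ one produces explicit monomial elements of $I^\mathbf{d}(\lambda)$; letting $J^\mathbf{d}(\lambda)$ be the monomial ideal they generate, one obtains a surjection $S(\n^-)/J^\mathbf{d}(\lambda)\twoheadrightarrow\gr^\mathbf{d}V(\lambda)$, and the proof is complete once $\dim S(\n^-)/J^\mathbf{d}(\lambda)\le\dim V(\lambda)$. Calibrating $\mathbf{d}$ so that the resulting monomial quotient has exactly the right dimension --- particularly in $\tt B_3$, $\tt D_4$, $\tt G_2$, where no uniform family of polytopes is available and the argument must be tuned case by case --- is where the bulk of the work lies.
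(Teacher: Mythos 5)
Your plan coincides with the paper's proof: an explicit degree function per type, verification of local monomiality at the fundamental weights against known polytopes (FFLV for $\tt A_n$, $\tt C_n$; Gornitskii for $\tt D_4$, $\tt G_2$; Backhaus--Kus for $\tt B_3$), and then the Minkowski-sum decomposition of these polytopes combined with a general ``Minkowski sum of fundamental pieces has the right cardinality $\Rightarrow \mathbf{d}\in\ms_{\gm}$'' criterion (the paper's Theorem~\ref{Thm:Minkow}, proved by the Cartan-component/dimension-count argument you sketch at the end). The only work left unspecified in your write-up --- the precise choice of $\mathbf{d}$ in each type and the case-by-case check that the fundamental-weight bases match the polytope lattice points --- is exactly the computational content the paper supplies.
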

We provide a degree function in the global monomial set in each case (for the $\tt A_n$-case this has been done already in \cite{FFR15}). Based on the evidence of several further examples, we conjecture:
\begin{conjecture*}
\begin{enumerate}
\item   $\ms_{\gm}\neq\emptyset$ for any simple finite-dimensional Lie algebra $\lie g$.
\item For any simply-laced simple Lie algebra, there exists $\underline{w}_0$ such that $\mathcal{S}_{\gm}\cap\mathcal{D}_{\underline{w}_0}^q$ is non-empty.
\end{enumerate}
\end{conjecture*}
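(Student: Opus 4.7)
The plan is to attack (1) and (2) by building on Theorem~B and on the polyhedral description of $\mathcal{D}_{\underline{w}_0}^q$ from Theorem~A; since the statement is conjectural and the cases already settled in Theorem~B required substantial type-by-type analysis, the proposal below is a research plan, and I will indicate which steps would require genuinely new ideas.

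For part (1), the first step is to set up a general reduction in which membership of $\bold{d}$ in $\ms_{\gm}$ can be checked on fundamental representations together with a compatibility condition under tensor products. Concretely, one would attempt to prove that if $I^{\bold{d}}(\omega_i)$ is monomial for each fundamental weight $\omega_i$, and if the natural surjection $\gr^{\bold{d}} V(\lambda+\mu)\twoheadrightarrow \gr^{\bold{d}} V(\lambda)\otimes \gr^{\bold{d}} V(\mu)$ behaves well with respect to the monomial structure (as it does in the Feigin--Fourier--Littelmann setting for $\tt A_n$ and $\tt C_n$), then $\bold{d}\in\ms_{\gm}$; this reduces (1) to a finite check per type. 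The second step is to construct candidate degree functions for the remaining types $\tt B_n$ ($n\geq 4$), $\tt D_n$ ($n\geq 5$), $\tt F_4$, $\tt E_6$, $\tt E_7$, $\tt E_8$. Two complementary sources of candidates are available: first, induction on rank via Levi subalgebras, exploiting that if $\bold{d}\in\ms_{\gm}(\g)$ then its restriction lies in $\ms_{\gm}(\lie{l})$ for any Levi $\lie{l}\subset\g$, and trying to extend the degree functions supplied by Theorem~B from $\tt B_3, \tt D_4$ to their ambient $\tt B_n, \tt D_n, \tt E_n, \tt F_4$; second, degree functions defined by the position in a reduced expression $\underline{w}_0$, chosen so as to be simultaneously compatible with $\Dwq$ and so help with part~(2).

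For part (2), restrict to the simply-laced types $\tt A_n, \tt D_n, \tt E_6, \tt E_7, \tt E_8$ and use the Hall-algebra interpretation of lattice points of $\Dwq$ mentioned in the introduction \cite{FFR15}. For each such type one fixes a Dynkin quiver orientation and takes $\underline{w}_0$ adapted to it; the defining inequalities of $\Dwq$ then translate into inequalities on dimensions of $\Hom$-spaces between indecomposable representations. The strategy is to take the candidate $\bold{d}\in\ms_{\gm}$ produced in~(1) (or a perturbation inside the rational cone generated by it) and verify those inequalities orientation by orientation, choosing the orientation---bipartite or linear---that simplifies the relevant $\Hom$-dimensions maximally. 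For $\tt A_n$ this is essentially \cite{FFR15}; for $\tt D_4$ the small rank makes a direct computation feasible given Theorem~B; the remaining $\tt D_n$ and $\tt E_n$ cases constitute the new content.

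The principal obstacle, as in the existing literature, is the monomiality verification in~(1) for the exceptional types $\tt E_7$ and $\tt E_8$. All known positive results (Dyck paths for $\tt A_n$, symplectic Dyck paths for $\tt C_n$, ad hoc models for $\tt B_3, \tt D_4, \tt G_2$) rely on a combinatorial model that controls the straightening of the Garland--Joseph relations $F_\beta^{\langle\lambda,\beta^\vee\rangle+1}\cdot v_\lambda = 0$ in $S(\n^-)$, and no analogue of such a model is presently available in the exceptional ranks. Any argument that does not fully reduce to low-rank Levi factors will have to cope with the large number of positive roots and the absence of classical-type descriptions; this is where genuinely new combinatorial input, likely computer-assisted for $\tt E_7$ and $\tt E_8$, would be necessary.
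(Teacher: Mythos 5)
The statement you were asked to prove is stated in the paper only as a conjecture; the authors do not prove it, and your submission, as you yourself say, is a research plan rather than a proof. So the honest verdict is that there is no proof here to check against the paper's (nonexistent) one: both parts remain open beyond the cases $\tt A_n$, $\tt C_n$, $\tt B_3$, $\tt D_4$, $\tt G_2$ covered by Theorem~B. Your plan does track the paper's actual methodology closely: the reduction you sketch in part~(1) is exactly Theorem~\ref{Thm:Minkow}, and the type-by-type search for a polytope model ($S_{\tt A_n}$, $S_{\tt C_n}$, $S_{\tt B_3}$, $S_{\tt D_4}$, $S_{\tt G_2}$) with the Minkowski-sum property is precisely how the authors handle each known case.

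Two concrete inaccuracies in the plan are worth fixing. First, the tensor-product compatibility is stated backwards and too loosely: the relevant map is a surjection of $S(\n^-)$-modules from the \emph{Cartan component} of $V^{\bd}(\lambda)\otimes V^{\bd}(\mu)$ onto $V^{\bd}(\lambda+\mu)$, not a surjection of $\gr^{\bd}V(\lambda+\mu)$ onto the full tensor product; and the actual hypothesis of Theorem~\ref{Thm:Minkow} is the sharp cardinality condition $\#\bigl(S(\varpi_1)^{+m_1}+\cdots+S(\varpi_n)^{+m_n}\bigr)=\dim V(\lambda)$. This condition is the genuine bottleneck: the paper's own example $d_{\alpha_{i,j}}=2^{(n-1)-(j-i)}$ in type $\tt A_4$ lies in $\ms_{\lm}$ yet the Minkowski sum has $1023$ points against $\dim V(\varpi_1+\varpi_2+\varpi_3+\varpi_4)=1024$, so "monomial on fundamentals plus good behaviour under tensor products" is not a finite check per type but the whole difficulty. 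Second, your claimed Levi restriction property ($\bd\in\ms_{\gm}(\g)$ implies the restriction lies in $\ms_{\gm}(\lie l)$) is nowhere established and is not obvious, since the defining ideal of $\gr^{\bd}V(\lambda)$ as an $S(\lie n^-_{\lie l})$-module is not simply the restriction of $I^{\bd}(\lambda)$; if you want to use it even heuristically you should at least verify it in the known cases. With these caveats, the plan is a reasonable statement of the state of the art and of where new ideas are needed ($\tt E_7$, $\tt E_8$ in particular), but it does not constitute a proof of either part of the conjecture.
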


\subsection{Remarks on the boundary of the classical degree cone}
Let $\g$ be of type $\tt A_n$. The boundary of $\mathcal{D}$, denoted by $\partial\mathcal{D}$, is defined as the difference of the closure of $\mathcal{D}$ and its relative interior. Let $S(\partial\md):=\partial\md\cap\mathbb{Z}^{\Delta_+}$ be the lattice points in $\partial\md$.
\par
Let $\bold{d}\in S(\partial\md)$. Then $\bd$ defines a filtration $\ff^\bd$ on $U(\lie n^-)$. In general, the associated graded algebra is no longer the commutative algebra $S(\lie n^-)$, but some algebra which is a degenerated version of $U(\lie n^-)$ and admits a quotient $S(\lie n^-)$. 
This associated graded algebra is the universal enveloping algebra of the Lie algebra $\lie n^{-,\bd}$, which is a contraction the Lie bracket of $\lie n^-$ on the prescribed roots by $\bd$ (see \cite{CIFFFR} for details).
\par
For $\lambda\in\mathcal{P}_+$, we can similarly define the associated graded module $V^{\bd}(\lambda)$: it is a cyclic $U(\lie n^{-,\bd})$-module with cyclic vector $v_\lambda^\bd$. It is proved in \cite{CIFFFR} that the highest weight orbit 
$$\ff^\bd(\lambda):=\overline{\exp(\lie n^{-,\bd})\cdot [v_\lambda^\bd]}\subset \mathbb{P}(V^\bd(\lambda))$$
is a flat degeneration of the partial flag variety $\ff(\lambda)$. 
\par
Moreover, for some $\bold{d}\in S(\partial\md)$, it is conjectured in \cite{FaFo} that a monomial basis of the representation $V^\bd(\lambda)$ can be parametrized by the lattice points in a chain-order polytope associated to a marked poset.

\subsection{Organization of paper}
In Section~\ref{sec2}, we fix notations, introduce the classical degree cone, the local and global monomial sets. Quantum degree cones are defined in Section~\ref{sec3}, where Theorem A is proved. We provide examples and properties of the quantum degree cone in Section~\ref{sec4}. In Section~\ref{sec5}, examples for local and global monomials sets are given and Theorem B is proved. We conclude with some examples on quantum degree cones in Section~\ref{sec6}.

\subsection{Acknowledgements}
X.F. is supported by the Alexander von Humboldt Foundation. T.B. is partially supported by the DFG priority program SPP 1388 "Representation Theory".

\section{Lie algebras and the classical degree cones}\label{sec2}
\subsection{Notations and basic properties}\label{Sec:Notation}
Let $\g$ be a simple Lie algebra of rank $n$ over the field of complex numbers $\mc$. We fix a triangular decomposition $\g=\mathfrak{n}^+\oplus\mathfrak{h}\oplus\mathfrak{n}^-$ and a set of simple roots $\Pi=\{\alpha_1,\ldots,\alpha_n\}$ of $\g$. The set of positive roots of $\g$ will be denoted by $\Delta_+$ with cardinality $N$. Let $\mathcal{Q}_+=\sum_{i=1}^n \mathbb{N}\varpi_i$ be the root monoid. Let $\rho=\frac{1}{2}\sum_{\alpha\in\Delta_+}\alpha$ be the half sum of positive roots. For $\alpha\in\Delta_+$, we pick a root vector $f_\alpha$ of weight $-\alpha$. Let $\varpi_i$, $i=1,\ldots,n$ be the fundamental weights, $\mathcal{P}$ be the weight lattice and $\mathcal{P}_+=\sum_{i=1}^n\mathbb{N}\varpi_i$ be the set of dominant integral weights. For a dominant integral weight $\lambda\in\mathcal{P}_+$, let $V(\lambda)$ be the finite dimensional irreducible representation of $\g$ of highest weight $\lambda$ and $v_\lambda$ a highest weight vector. Let $U(\lie n^-)$ be the enveloping algebra of $\lie n^-$ and $S(\lie n^-)$ be the symmetric algebra of $\lie n^-$. For $\lambda=\sum_{i=1}^n\lambda_i\varpi_i\in\mathcal{P}_+$, denote the height of $\lambda$ by: $|\lambda|:=\sum_{i=1}^n m_i$.
\par
We define $\mathbb{R}^{\Delta_+}:=\{f:\Delta_+\ra\mathbb{R}\text{ is a function}\}$. It is an $\mathbb{R}$-vector space of dimension $N$. Let $\mathbb{R}^{\Delta_+}_{\geq 0}\subset \mathbb{R}^{\Delta_+}$ be the set of functions taking positive values, we define similarly $\mathbb{N}^{\Delta_+}$ and $\mathbb{Z}^{\Delta_+}$. A function $\bold{d}\in\mathbb{R}^{\Delta_+}$ is determined by its values $(d_{\beta}:=\bold{d}(\beta))_{\beta\in\Delta_+}$. Once a sequence of positive roots $(\beta_1,\beta_2,\ldots,\beta_N)$ is fixed, $\mathbb{R}^{\Delta_+}$ is identified with $\mathbb{R}^N$ via: $\bold{d}\mapsto (d_{\beta_1},d_{\beta_2},\ldots,d_{\beta_N})$.
\par
Let $W$ be the Weyl group of $\g$ with generators $s_1,\ldots,s_n$ and $w_0\in W$ be the longest element. We denote $R(w_0)$ the set of all reduced decompositions of $w_0$. 
\par
For any reduced decomposition $\underline{w}_0=s_{i_1}\ldots s_{i_N}\in R(w_0)$, we associate a convex total order on $\Delta_+$: for $1\leq t\leq N$, we denote $\beta_t=s_{i_1}\ldots s_{i_{t-1}}(\alpha_{i_t})$, then $\Delta_+=\{\beta_1,\ldots,\beta_N\}$ and $\beta_1<\beta_2<\ldots<\beta_N$ is the desired convex total order. It is proved by Papi \cite{P94} that the above association induces a bijection between $R(w_0)$ and the set of all convex total orders on $\Delta_+$.
\par
For the simple Lie algebra $\lie {sl}_{n+1}$ of type $\tt A_n$ and $1\leq i\leq j\leq n$, we denote $\alpha_{i,j}:=\alpha_i+\ldots+\alpha_j$, then $\Delta_+=\{\alpha_{i,j}\mid 1\leq i\leq j\leq n\}$. For the simple Lie algebra of type $\tt B_n$ and $1\leq i\leq j\leq n$, we denote $\alpha_{i,j}:=\alpha_i+\ldots+\alpha_j$ and for $\alpha_{i,\overline{j}}:=\alpha_i+\ldots+\alpha_n+\alpha_n+\ldots+\alpha_j$, then $\Delta_+=\{\alpha_{i,j},\alpha_{k,\overline{l}}\mid 1\leq i\leq j\leq n,\ 1\leq k< l\leq n\}$. For the simple Lie algebra $\lie {sp}_{2n}$ of type $\tt C_n$ and $1\leq i\leq j\leq n$, we denote $\alpha_{i,j}:=\alpha_i+\ldots+\alpha_j$ and $\alpha_{i,\overline{j}}:=\alpha_i+\ldots+\alpha_n+\ldots+\alpha_j$, notice that $\alpha_{i,n}=\alpha_{i,\overline{n}}$, then $\Delta_+=\{\alpha_{i,j},\alpha_{i,\overline{j}}\mid 1\leq i\leq j\leq n\}$.
\par
For $\bold{s}=(s_\alpha)_{\alpha\in\Delta_+}\in\mathbb{N}^{\Delta_+}$, we denote $f^{\bold{s}}:=\prod_{\alpha\in\Delta_+}f_\alpha^{s_\alpha}\in S(\lie n^-)$. For any $\bold{d}\in\mathbb{N}^{\Delta_+}$, we denote $\textrm{deg}_{\bold{d}}(f^\bold{s}):=\sum_{\alpha\in\Delta_+} s_\alpha d_\alpha$.

\subsection{The classical degree cone}

We start with the classical degree cone.

\begin{definition}
The classical degree cone $\md$ is defined by:
\[
\mathcal{D}:=\{\bold{d}\in\mathbb{R}_{\geq 0}^{\Delta_+}\mid \text{for any $\alpha,\beta,\gamma\in\Delta_+$ such that $\alpha+\beta=\gamma$, }d_\alpha+d_\beta>d_\gamma\}.
\]
\end{definition}
\begin{example} The element $\mathbf{e}$ defined by $e_\alpha = 1$ for all $\alpha$ is in $\mathcal{D}$ for any simple Lie algebra.
\end{example}
By definition, $\md$ is an open polyhedral cone. We let $S(\md):=\md\cap\mathbb{Z}^{\Delta_+}$ denote the set of lattice points in $\md$. For any $\bold{d}=(d_{\beta})_{\beta\in\Delta_+}\in S(\md)$, we define a filtration $\ff^{\,\bold{d}}$ on $U(\lie n^-)$ by:
$$\ff^{\,\bold{d}}_{s}U(\lie n^-):=\sspan\{f_{\gamma_1}f_{\gamma_2}\ldots f_{\gamma_k}\ |\ \gamma_1,\ldots,\gamma_k\in\Delta_+\ \text{such that}\  d_{\gamma_1}+d_{\gamma_2}+\ldots+d_{\gamma_k}\leq s\}.$$
By the cyclicity, every irreducible representation $V(\lambda)$ admits a filtration arising from $\ff^{\,\bold{d}}$:
$$\ff^{\,\bold{d}}_{s}V(\lambda):=\ff^{\,\bold{d}}_{s}U(\lie n^-)\cdot v_\lambda.$$
Note that for $\mathbf{d} = \mathbf{e}$, this is the PBW filtration, which has been subject to a lot of researches in the past ten years.

\begin{lemma}\it
For any $\bold{d}\in S(\md)$, we have:
\begin{enumerate}
\item $\ff^{\,\bold{d}}:=(\ff^{\,\bold{d}}_{0}\subset \ff^{\,\bold{d}}_{1}\subset\ldots\subset \ff^{\,\bold{d}}_{n}\subset\ldots)$ defines a filtration on $U(\lie n^-)$ whose associated graded algebra is isomorphic to $S(\lie n^-)$.
\item Let $V^\bold{d}(\lambda)$ be the graded module associated to the induced filtration. Then $V^\bold{d}(\lambda)$ is a cyclic $S(\lie n^-)$-module.
\end{enumerate}
\end{lemma}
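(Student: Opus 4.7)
The plan is to treat (1) as a concrete instance of the standard PBW-filtration / associated-graded correspondence, where the defining inequalities of $\md$ play exactly the role needed to make brackets vanish in the associated graded, and then deduce (2) as a formal consequence.

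For (1), I would first verify that $\ff^{\,\bold{d}}$ is an algebra filtration. Since the spanning sets defining $\ff^{\,\bold{d}}_s U(\lie n^-)$ are the unordered monomials $f_{\gamma_1}\cdots f_{\gamma_k}$ with $\sum d_{\gamma_i}\le s$, the product of such a monomial with another of $\bold{d}$-degree $\le t$ is itself a spanning element of $\ff^{\,\bold{d}}_{s+t} U(\lie n^-)$, so $\ff^{\,\bold{d}}_s\cdot \ff^{\,\bold{d}}_t\subseteq \ff^{\,\bold{d}}_{s+t}$. The next step is to exhibit a surjective algebra map $S(\lie n^-)\twoheadrightarrow \gr^{\bold{d}} U(\lie n^-)$. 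Send the root vector $f_\alpha$ in $S(\lie n^-)$ to the class of $f_\alpha$ in $\ff^{\,\bold{d}}_{d_\alpha}/\ff^{\,\bold{d}}_{d_\alpha-1}$. To see this is well-defined (i.e., that images of root vectors commute), take $\alpha,\beta\in\Delta_+$ and write $[f_\alpha,f_\beta]$ as a scalar multiple of $f_{\alpha+\beta}$ when $\alpha+\beta\in\Delta_+$ (and zero otherwise). The defining inequality $d_\alpha+d_\beta>d_{\alpha+\beta}$ of $\md$ then forces $[f_\alpha,f_\beta]\in \ff^{\,\bold{d}}_{d_\alpha+d_\beta-1}U(\lie n^-)$, so $f_\alpha f_\beta-f_\beta f_\alpha$ vanishes in the corresponding graded piece and the map lands in a commutative algebra.

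Surjectivity is immediate from the definition of $\ff^{\,\bold{d}}$. For injectivity, the key point—and arguably the only non-formal step—is to use a PBW-type argument. Fix any total order on $\Delta_+$; by the classical PBW theorem, the ordered monomials $f^{\bold{s}}$ form a basis of $U(\lie n^-)$. Grouping them by their $\bold{d}$-degree, the span of those with $\bold{d}$-degree $\le s$ is easily shown by induction on degree (using the bracket estimate above to straighten arbitrary monomials into ordered ones modulo lower filtration) to equal $\ff^{\,\bold{d}}_s U(\lie n^-)$. Consequently the classes of ordered monomials of $\bold{d}$-degree exactly $s$ give a basis of $\ff^{\,\bold{d}}_s/\ff^{\,\bold{d}}_{s-1}$; these are precisely the images of the standard basis of $S(\lie n^-)$, so the surjection is a bijection. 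The main subtlety I expect is bookkeeping in this induction, since the filtration degree is not the usual length and one needs the strict inequalities of $\md$ in order to terminate the straightening.

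For (2), no new idea is needed: the filtration $\ff^{\,\bold{d}}_s V(\lambda)=\ff^{\,\bold{d}}_s U(\lie n^-)\cdot v_\lambda$ is, by construction, a compatible filtration of $V(\lambda)$ over the filtered algebra $U(\lie n^-)$, so $V^\bold{d}(\lambda)=\gr^{\bold{d}} V(\lambda)$ becomes a graded module over $\gr^{\bold{d}} U(\lie n^-)\cong S(\lie n^-)$. Since $v_\lambda\in \ff^{\,\bold{d}}_0 V(\lambda)$, its class $v_\lambda^\bold{d}$ lies in degree zero, and the equality $\ff^{\,\bold{d}}_s V(\lambda)=\ff^{\,\bold{d}}_s U(\lie n^-)\cdot v_\lambda$ descends to $V^\bold{d}(\lambda)=S(\lie n^-)\cdot v_\lambda^\bold{d}$, exhibiting $V^\bold{d}(\lambda)$ as cyclic.
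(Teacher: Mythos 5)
Your proposal is correct and follows essentially the same route as the paper: the paper's (much terser) proof also rests on the single observation that the strict inequalities $d_\alpha+d_\beta>d_{\alpha+\beta}$ defining $\md$ push the bracket $[f_\alpha,f_\beta]$ into strictly lower filtration degree, so the relations of $U(\lie n^-)$ degenerate to those of $S(\lie n^-)$ in the associated graded, with (2) being formal. The extra detail you supply (multiplicativity of the filtration and the PBW straightening argument for injectivity) is exactly the standard bookkeeping the paper leaves implicit.
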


\begin{proof}
The universal enveloping algebra $U(\lie n^-)$ is a quotient of the tensor algebra $T(\lie n^-)$ by the ideal generated by $x\ts y-y\ts x-[x,y]$ for all $x,y\in \lie n^-$. In $\lie n^-$, for $\alpha,\beta,\gamma\in\Delta_+$ with $\alpha+\beta=\gamma$, $[f_\alpha,f_\beta]$ is a multiple of $f_\gamma$; if $\bold{d}\in\mathcal{D}$, we have $d_\alpha+d_\beta>d_\gamma$, which proves the first part of the lemma. The second part is clear.
\end{proof}

Let $v_\lambda^\bold{d}$ be a cyclic vector in $V^\bold{d}(\lambda)$. By (2) of the lemma, the $S(\lie n^-)$-module map 
$$\vp:S(\lie n^-)\ra V^\bold{d}(\lambda),\ \  x\mapsto x\cdot v_\lambda^\bold{d}$$
is surjective. We denote $I^\bold{d}(\lambda):=\ker\vp$ and call it the defining ideal of $V^\bold{d}(\lambda)$.

\subsection{The local and global monomial set}
We are interested in some particular degrees such that the associated graded module admits "good" bases.

\begin{definition}
The \emph{local monomial set} $\mathcal{S}_{\lm}$ is defined by:
\[
\mathcal{S}_{\lm}:=\{\bold{d}=(d_\beta)_{\beta\in\Delta_+}\in S(\md)\ |\ \text{for any}\ i=1,2,\ldots,n,\ I^\bold{d}(\varpi_i)\ \text{is a monomial ideal}\}.
\]
\end{definition}
\begin{remark} 
For any simple Lie algebra $\g$, the local monomial set $\mathcal{S}_{\lm}$ is non-empty. For example, one possibility is to linearly order a monomial basis of any fixed regular representation. The induced order will be in the local monomial set.
\end{remark}
\begin{definition}
The \emph{global monomial set} $\mathcal{S}_{\gm}$ is defined by:
\[
\mathcal{S}_{\gm}:=\{\bold{d}=(d_\beta)_{\beta\in\Delta_+}\in S(\md)\ |\ \text{for any}\ \lambda\in\mathcal{P}_+,\ I^\bold{d}(\lambda)\ \text{is a monomial ideal}\}.
\]
\end{definition}

It is clear that $\mathcal{S}_{\gm}\subset \mathcal{S}_{\lm}$. \par
The main goal of this paper is to study the following questions:
\begin{enumerate}
\item Whether the global monomial set $\mathcal{S}_{\gm}$ is non-empty? That is to say, does there exist a filtration on $U(\lie n^-)$ arising from $\bold{d}\in\md$ such that for any finite dimensional irreducible representation, the defining ideal of the associated graded module is monomial?
\item If the answer to the above question is affirmative, for any $\lambda\in\mathcal{P}_+$, we obtain a unique monomial basis for $V^{\bold{d}}(\lambda)$ parametrized by $S(\lambda):=\{\bold{s}\in\mathbb{N}^{\Delta_+}\mid f^{\bold{s}}\cdot v_\lambda^{\bold{d}}\neq 0\}$. Whether there exists a lattice polytope $P(\lambda)$ such that $S(\lambda)$ is exactly the lattice points in $P(\lambda)$?
\end{enumerate}

\subsection{Criteria for the local monomial set}
We first give a criterion to decide whether $\bold{d}\in\mathcal{D}$ is contained in $\mathcal{S}_{\lm}$, which is useful in the rest of the paper.
\par
We fix $\lambda\in\mathcal{P}_+$. For $\mu\in\mathcal{P}$ such that $r_\mu:=\dim V(\lambda)_\mu\neq 0$, we denote 
$$S_\mu:=\{\bold{s}\in\mathbb{N}^{\Delta_+}\mid f^{\bold{s}}\cdot v_\lambda\neq 0\in V(\lambda)\ \ \text{and}\ \ \sum_{\alpha\in\Delta_+}s_\alpha\alpha=\lambda-\mu\}:$$
this is a finite set. Suppose that $S_\mu=\{\bold{s}_1,\bold{s}_2,\ldots,\bold{s}_{m_\mu}\}$ with
$$\deg_{\bd}(f^{\bold{s}_1})\leq \deg_{\bd}(f^{\bold{s}_2})\leq\ldots\leq \deg_{\bd}(f^{\bold{s}_{m_\mu}}).$$

Let $T_\mu=\{\bold{s}_k\mid f^{\bold{s}_k}\cdot v_\lambda\notin\text{span}\{f^{\bold{s}_1}\cdot v_\lambda,\ldots,f^{\bold{s}_{k-1}}\cdot v_\lambda\}\}$.
Then by construction the set $\{ f^{\mathbf{s}}.v_\lambda \mid \mathbf{s} \in T_{\mu} \}$ is a basis.
\begin{lemma}\label{Lem:useful}
Let  $\mu\in\mathcal{P}$. Suppose
\[
\mathbf{s}_{k} \notin T_{\mu} \Rightarrow \deg(f^{\mathbf{s}_k}) >  \deg(f^{\mathbf{s}_l}) \text{ for all }  \mathbf{s}_l \in T_{\mu} \text{ with } l < k,\]
then the defining ideal $I^\bold{d}(\lambda)$ is monomial.
\end{lemma}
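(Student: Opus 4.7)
The plan is to show that the kernel $I^{\mathbf{d}}(\lambda)$ of the surjection $\varphi: S(\lie n^-) \twoheadrightarrow V^{\mathbf{d}}(\lambda)$ is spanned by PBW monomials, by working piece by piece in the natural bigrading. Because $\varphi$ respects both the weight grading and the $\mathbf{d}$-degree, $I^{\mathbf{d}}(\lambda)$ decomposes into multigraded components indexed by $(\lambda - \mu, s)$, and it suffices to show that in each such component the images of the PBW monomials $f^{\mathbf{s}}$ are either zero or linearly independent. Under that condition the component of the kernel is spanned by the monomials mapping to zero, so $I^{\mathbf{d}}(\lambda)$ as a whole is monomial.

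The central step is the following vanishing: if $\mathbf{s}_k \in S_\mu \setminus T_\mu$, then $f^{\mathbf{s}_k} \cdot v_\lambda^{\mathbf{d}} = 0$ in $V^{\mathbf{d}}(\lambda)$. By the greedy construction of $T_\mu$, the vector $f^{\mathbf{s}_k} \cdot v_\lambda$ lies in $\mathrm{span}\{f^{\mathbf{s}_l} \cdot v_\lambda : l < k\}$, and by induction this space equals $\mathrm{span}\{f^{\mathbf{s}_l} \cdot v_\lambda : l < k,\ \mathbf{s}_l \in T_\mu\}$. The hypothesis then says $\deg(f^{\mathbf{s}_l}) < \deg(f^{\mathbf{s}_k})$ for every contributing vector on the right, so the resulting equation places $f^{\mathbf{s}_k} \cdot v_\lambda$ inside $\mathcal{F}^{\mathbf{d}}_{\deg(f^{\mathbf{s}_k}) - 1} V(\lambda)$; its class at level $\deg(f^{\mathbf{s}_k})$ in the associated graded therefore vanishes.

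With this vanishing in hand, $\{f^{\mathbf{s}} \cdot v_\lambda^{\mathbf{d}} : \mathbf{s} \in T_\mu\}$ spans $V^{\mathbf{d}}(\lambda)_\mu$, and since $\dim V^{\mathbf{d}}(\lambda)_\mu = \dim V(\lambda)_\mu = |T_\mu|$ it is a basis; restricting by $\mathbf{d}$-degree gives a basis of each homogeneous component $V^{\mathbf{d}}(\lambda)_{\mu, s}$. The monomials $f^{\mathbf{s}}$ lying in the multigraded piece $(\lambda - \mu, s)$ of $S(\lie n^-)$ then split into three classes: those outside $S_\mu$ (already zero on $v_\lambda$ in $V(\lambda)$), those in $S_\mu \setminus T_\mu$ (zero by the central step), and those in $T_\mu$ (part of the above basis). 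Hence the kernel in each piece is the span of the zero-mapping monomials, and $I^{\mathbf{d}}(\lambda)$ is monomial. The only subtle step is the central vanishing: one must know that when $f^{\mathbf{s}_k} \cdot v_\lambda$ is rewritten through the $T_\mu$-indexed vectors, every contributing term has strictly smaller $\mathbf{d}$-degree than $f^{\mathbf{s}_k}$, and this is exactly what the hypothesis guarantees; everything else is bookkeeping with the bigraded structures.
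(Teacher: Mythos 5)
Your argument is correct and follows the same route as the paper's proof: the key point in both is that any $f^{\mathbf{s}_k}\cdot v_\lambda$ with $\mathbf{s}_k\notin T_\mu$ is a combination of $T_\mu$-vectors of strictly smaller $\mathbf{d}$-degree, hence vanishes in the associated graded module, so $I^{\mathbf{d}}(\lambda)$ is spanned by the monomials killing $v_\lambda^{\mathbf{d}}$. Your version just spells out the bigraded bookkeeping and the dimension count that the paper leaves implicit.
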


\begin{proof}
It suffices to show that if $\bold{s}_k\notin T_\mu$, then $f^{\bold{s}_k}\in I^\bold{d}(\lambda)_\mu$. Indeed, by definition, $\bold{s}_k\notin T_\mu$ implies that $f^{\bold{s}_k}\cdot v_\lambda$ is a linear combination of $f^{\bold{s}_{i_1}}\cdot v_\lambda$, $\ldots$, $f^{\bold{s}_{i_p}}\cdot v_\lambda$ with $\mathbf{s}_{i_q} \in T_{\mu}$. By assumption, $\deg(f^{\mathbf{s}_k}) >  \deg(f^{\mathbf{s}_{i_q}})$, hence in the graded module we have $f^{\bold{s}_k}\in I^{\bold{d}}(\lambda)_\mu$.
\end{proof}

The following corollary is a special case of the lemma; it will be used repeatedly when dealing with the examples.

\begin{corollary}\label{Cor:useful}
The defining ideal $I^\bold{d}(\lambda)$ is monomial, if for any $\mu\in\mathcal{P}$ with $r_\mu\neq 0$:
\begin{enumerate}
\item if $r_\mu=1$, $\deg_{\bd}(f^{\bs_1})<\deg_{\bd}(f^{\bs_2})$;
\item if $r_\mu>1$, $\#\{\deg_{\bd}(f^{\bs_k})\mid 1\leq k\leq m_\mu\}=m_\mu$.
\end{enumerate}
\end{corollary}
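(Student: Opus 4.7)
The plan is to deduce the corollary directly from Lemma~\ref{Lem:useful} by verifying, in each of the two cases, that the condition ``$\mathbf{s}_k\notin T_\mu \Rightarrow \deg_{\bd}(f^{\mathbf{s}_k}) > \deg_{\bd}(f^{\mathbf{s}_l})$ for all $\mathbf{s}_l\in T_\mu$ with $l<k$'' is met for every weight $\mu\in\mathcal P$ with $r_\mu\neq 0$. Recall that $T_\mu\subseteq S_\mu$ is constructed greedily from the total ordering on $S_\mu$ induced by $\deg_{\bd}$, and by construction $|T_\mu|=r_\mu=\dim V(\lambda)_\mu$.

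First I would dispose of case~(1). If $r_\mu=1$, then $\bs_1\in T_\mu$ (since $f^{\bs_1}\cdot v_\lambda\neq 0$ generates $V(\lambda)_\mu$), and therefore $T_\mu=\{\bs_1\}$. Any $\bs_k\notin T_\mu$ has $k\geq 2$, and the hypothesis $\deg_{\bd}(f^{\bs_1}) < \deg_{\bd}(f^{\bs_2})$ combined with the prescribed ordering $\deg_{\bd}(f^{\bs_2})\leq \deg_{\bd}(f^{\bs_k})$ yields $\deg_{\bd}(f^{\bs_k}) > \deg_{\bd}(f^{\bs_1})$, as required.

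For case~(2), the assumption $\#\{\deg_{\bd}(f^{\bs_k})\mid 1\leq k\leq m_\mu\}=m_\mu$ combined with the prescribed non-decreasing order forces the sequence of degrees to be strictly increasing. Hence for any $l<k$ one has $\deg_{\bd}(f^{\bs_l}) < \deg_{\bd}(f^{\bs_k})$, which in particular holds for every $\bs_l\in T_\mu$ with $l<k$. Invoking Lemma~\ref{Lem:useful} in both cases then concludes the argument. There is no real obstacle here: the corollary is a pure bookkeeping consequence of the lemma, the only subtlety being the observation that in case~(1) the smallest-degree representative $\bs_1$ automatically belongs to $T_\mu$, so only the jump between $\bs_1$ and $\bs_2$ needs to be controlled.
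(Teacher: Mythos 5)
Your proposal is correct and follows exactly the route the paper intends: the corollary is stated there without proof as ``a special case of the lemma,'' and your verification that the hypotheses of Lemma~\ref{Lem:useful} hold in each case (using $T_\mu=\{\bs_1\}$ when $r_\mu=1$, and strict monotonicity of the degrees when they are pairwise distinct) is the natural filling-in of that claim.
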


\subsection{How local and global monomial sets are related}

We give a sufficient condition for an element in $\mathcal{S}_{\lm}$ being contained in $\mathcal{S}_{\gm}$.  Let $\bold{d}\in\ms_{\lm}$ and for $1\leq i\leq n$, $S^{\bold{d}}(\varpi_i)=\{\bold{a}\in\mathbb{N}^{\Delta_+}\mid f^\bold{a}\cdot v_{\varpi_i}^\bold{d}\neq 0\}$. For an integer $m\geq 1$, let $S^{\bold{d}}(\varpi_i)^{+m}$ denote the $m$-fold Minkowski sum of $S^{\bold{d}}(\varpi_i)$. We will write $S(\varpi_i)$ to instead $S^{\bold{d}}(\varpi_i)$ when the context is clear.

\begin{theorem}\label{Thm:Minkow}
\it For any $\lambda=m_1\varpi_1+m_2\varpi_2+\ldots+m_n\varpi_n\in\mathcal{P}_+$, if $\#(S(\varpi_1)^{+m_1}+S(\varpi_2)^{+m_2}+\ldots+S(\varpi_n)^{+m_n})=\dim V(\lambda)$, then $\bold{d}\in\ms_{\gm}$.
\end{theorem}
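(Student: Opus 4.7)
The plan is to embed $V(\lambda)$ into the tensor product
\[
W := V(\varpi_1)^{\otimes m_1}\otimes V(\varpi_2)^{\otimes m_2}\otimes\cdots\otimes V(\varpi_n)^{\otimes m_n}
\]
as the Cartan component, sending $v_\lambda$ to $w := v_{\varpi_1}^{\otimes m_1}\otimes\cdots\otimes v_{\varpi_n}^{\otimes m_n}$, and then to read off the monomial structure of $I^\bd(\lambda)$ by a dimension count on the associated graded. Write $T(\lambda) := S(\varpi_1)^{+m_1}+\cdots+S(\varpi_n)^{+m_n}$ for the Minkowski sum appearing in the hypothesis.

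First I would equip each $V(\varpi_i)$ with its filtration $\ff^\bd_\bullet$ and $W$ with the induced tensor product filtration; the associated graded $W^\bd := V^\bd(\varpi_1)^{\otimes m_1}\otimes\cdots\otimes V^\bd(\varpi_n)^{\otimes m_n}$ is an $S(\lie n^-)$-module via the Leibniz coproduct $\Delta(f_\alpha)=f_\alpha\otimes 1+1\otimes f_\alpha$. Iterating Leibniz yields $\ff^\bd_s U(\lie n^-)\cdot w \subseteq \ff^\bd_s W$, and hence $\ff^\bd_s V(\lambda)\subseteq \ff^\bd_s W\cap V(\lambda)$ for every $s$. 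The filtered inclusion $V(\lambda)\hookrightarrow W$ therefore passes to a well-defined $S(\lie n^-)$-linear map $\phi: V^\bd(\lambda)\to W^\bd$ sending $v_\lambda^\bd\mapsto w^\bd$; its image is the cyclic submodule $S(\lie n^-)\cdot w^\bd$, and $\phi$ is surjective onto this image.

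Next I would expand, via Leibniz iterated over the $m_1+\cdots+m_n$ tensor positions,
\[
f^{\mathbf{a}}\cdot w^\bd \;=\; \sum_{\sum_{i,j}\mathbf{a}_{i,j}=\mathbf{a}}\binom{\mathbf{a}}{\mathbf{a}_{1,1},\ldots,\mathbf{a}_{n,m_n}}\bigotimes_{i,j}\bigl(f^{\mathbf{a}_{i,j}}v_{\varpi_i}^\bd\bigr),
\]
with $(i,j)$ ranging over $1\le i\le n$, $1\le j\le m_i$. Since $\bd\in\ms_{\lm}$, a tensor factor $f^{\mathbf{a}_{i,j}}v_{\varpi_i}^\bd$ is nonzero exactly when $\mathbf{a}_{i,j}\in S(\varpi_i)$; therefore $f^{\mathbf{a}}\cdot w^\bd\neq 0$ iff $\mathbf{a}\in T(\lambda)$. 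The crucial combinatorial observation is that a surviving tuple $(\mathbf{a}_{i,j})$ recovers $\mathbf{a}=\sum_{i,j}\mathbf{a}_{i,j}$ uniquely, so distinct $\mathbf{a}\neq\mathbf{a}'$ in $T(\lambda)$ produce expansions whose supports in the tensor monomial basis of $W^\bd$ are disjoint; since the multinomial coefficients are positive integers, no cancellation is possible. This gives linear independence of $\{f^{\mathbf{a}}w^\bd:\mathbf{a}\in T(\lambda)\}$ and the equality $\dim(S(\lie n^-)\cdot w^\bd)=\#T(\lambda)$.

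The dimension count then closes the argument: $\dim V^\bd(\lambda)=\dim V(\lambda)=\#T(\lambda)$ by hypothesis, matching the dimension of the image, so $\phi$ is an isomorphism. Pulling the basis $\{f^{\mathbf{a}}w^\bd:\mathbf{a}\in T(\lambda)\}$ back through $\phi$ identifies $\{f^{\mathbf{a}}v_\lambda^\bd:\mathbf{a}\in T(\lambda)\}$ as a basis of $V^\bd(\lambda)$ and forces $f^{\mathbf{a}}v_\lambda^\bd=0$ for all $\mathbf{a}\notin T(\lambda)$; hence $I^\bd(\lambda)=\sspan\{f^{\mathbf{a}}:\mathbf{a}\notin T(\lambda)\}$ is a monomial ideal, and running this over all $\lambda\in\mathcal{P}_+$ yields $\bd\in\ms_{\gm}$. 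The main obstacle is the linear-independence step — verifying the disjoint-support property of the Leibniz expansions and the non-vanishing of the multinomial coefficients — since everything else reduces to routine filtered/graded bookkeeping and standard properties of tensor product filtrations.
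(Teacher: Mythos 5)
Your proof is correct, and it is organized genuinely differently from the paper's. The paper argues by induction on the height of the weight: it splits $\lambda=\lambda'+\mu$, works in $V^{\bd}(\lambda')\otimes V^{\bd}(\mu)$, invokes \cite{FFL3} for the linear independence of $\{f^{\bs}\cdot(v^{\bd}_{\lambda'}\otimes v^{\bd}_{\mu})\}$, and establishes the vanishing $f^{\bs}\cdot v^{\bd}_{\lambda}=0$ for $\bs\notin S(\lambda)$ by an explicit degree-comparison (Proposition \ref{Prop:eqzero}). You instead unroll the induction completely and work once and for all in $W^{\bd}=\bigotimes_i V^{\bd}(\varpi_i)^{\otimes m_i}$, where every tensor factor carries an honest monomial basis because $\bd\in\ms_{\lm}$; the linear independence then reduces to the disjoint-support observation (a surviving tuple determines its sum $\mathbf{a}$, and distinct tuples give distinct tensor basis vectors), and the vanishing statement comes for free from the injectivity of $\phi$, which you extract from the dimension count rather than from degree bookkeeping. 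Your route is more self-contained (no induction, no appeal to \cite{FFL3}); the paper's route additionally records the intermediate fact that the Cartan component of $V^{\bd}(\lambda')\otimes V^{\bd}(\mu)$ is defined by a monomial ideal and is isomorphic to $V^{\bd}(\lambda'+\mu)$ for every splitting. Two points you should make explicit when writing this up: (i) the disjoint-support step uses that $\{f^{\mathbf{b}}\cdot v^{\bd}_{\varpi_i}\mid\mathbf{b}\in S(\varpi_i)\}$ is linearly independent in $V^{\bd}(\varpi_i)$, which is precisely where the monomiality of $I^{\bd}(\varpi_i)$, i.e. $\bd\in\ms_{\lm}$, enters; and (ii) identifying the $S(\lie n^-)$-action on $\operatorname{gr}W$ with the Leibniz action on $\bigotimes_i V^{\bd}(\varpi_i)^{\otimes m_i}$ requires noting that in the exact coproduct expansion of the PBW monomial $f^{\mathbf{a}}$ in $U(\lie n^-)$, any term in which some factor satisfies $\mathbf{a}_{i,j}\notin S(\varpi_i)$ drops strictly in filtration degree and hence dies in the associated graded.
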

The rest of this paragraph is devoted to the proof of this statement. It is in adaptation of the proof in \cite[Theorem 3]{FFR15}. For the convenience of the reader, we provide some key points of this proof, details can be found in [\textit{loc.cit}].
For any $\tau= \sum_{i=1}^n r_i\varpi_i \in \mathcal{P}_+$, we define 
$$S(\tau) := S(\varpi_1)^{+r_1} + S(\varpi_2)^{+r_2} + \ldots + S(\varpi_n)^{+r_n}.$$
We want to show simultaneously that: for $\lambda,\mu \in \mathcal{P}_+$,
\begin{itemize}
\item[(1)] $\{ f^{\bs}\cdot v^{\bd}_{\lambda + \mu} \mid \bs \in S(\lambda + \mu) \}$ is a basis of $V^{\bd}(\lambda + \mu)$;
\item[(2)] the defining ideal $I^{\bd}(\lambda + \mu)$ is monomial.
\end{itemize}
\par
The statements will be proved by induction on the height of $\lambda+\mu$. The height $1$ case is the assumption $\bd \in \ms_{\lm}$. The induction step will be divided into several parts.
\par
Let $<$ be a total order on $\{f_\beta \mid \beta \in \Delta_+\}$ refining the partial order defined by $\bd = (d_{\beta})_{\beta \in \Delta_+}$ and consider the induced lexicographical order on the monomials in $U(\n^-)$. The following proposition is proved essentially in \cite[Proposition 2.11]{FFL3}; in \cite[Proposition 4]{FFR15}, it is proved in detail for a particular degree function for type $\tt A_n$, but the proof used there is valid for a general $\bd$.

\begin{proposition}
For any $\lambda, \mu \in \mathcal{P}^+$ the set $\{ f^{\bs}\cdot (v^{\bd}_\lambda \otimes v^{\bd}_\mu) \mid \bs \in S(\lambda + \mu)\}$ is linear independent in $V^{\bd}(\lambda) \otimes V^{\bd}(\mu)$.
\end{proposition}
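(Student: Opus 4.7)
The plan is to follow the strategy of \cite[Proposition 2.11]{FFL3} and its elaboration in \cite[Proposition 4]{FFR15}, adapted to a general $\bd \in \ms_{\lm}$. Throughout, I invoke the inductive hypothesis on the total height $|\lambda| + |\mu|$: for weights $\lambda', \mu'$ with strictly smaller total height, the set $\{f^{\bt} v^{\bd}_{\lambda'} : \bt \in S(\lambda')\}$ is a basis of $V^\bd(\lambda')$ and $I^\bd(\lambda')$ is a monomial ideal, so $f^{\bt} v^{\bd}_{\lambda'} = 0$ in $V^\bd(\lambda')$ precisely when $\bt \notin S(\lambda')$. The base case $\lambda = 0$ or $\mu = 0$ is immediate; otherwise $|\lambda|, |\mu| \geq 1 < |\lambda + \mu|$, and the inductive hypothesis applies to both tensor factors.

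Since the associated graded algebra $S(\lie n^-)$ is a commutative and cocommutative Hopf algebra with coproduct $\Delta(f_\beta) = f_\beta \otimes 1 + 1 \otimes f_\beta$, the $S(\lie n^-)$-action on the tensor product admits the Leibniz expansion
\[
f^\bs \cdot (v^\bd_\lambda \otimes v^\bd_\mu) \;=\; \sum_{\bt + \mathbf{u} = \bs} \binom{\bs}{\bt} \, (f^\bt v^\bd_\lambda) \otimes (f^{\mathbf{u}} v^\bd_\mu), \qquad \binom{\bs}{\bt} := \prod_{\alpha \in \Delta_+} \binom{s_\alpha}{t_\alpha}.
\]
By the inductive hypothesis, only pairs $(\bt, \mathbf{u}) \in S(\lambda) \times S(\mu)$ contribute nonzero terms against the canonical basis $\{(f^{\bt} v^{\bd}_\lambda) \otimes (f^{\mathbf{u}} v^{\bd}_\mu) : (\bt, \mathbf{u}) \in S(\lambda) \times S(\mu)\}$ of $V^\bd(\lambda) \otimes V^\bd(\mu)$. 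For each $\bs \in S(\lambda + \mu)$, writing $\lambda = \sum_i r_i \varpi_i$ and $\mu = \sum_i m_i \varpi_i$, I use the Minkowski-sum description to decompose $\bs = \sum_i \sum_{k=1}^{r_i + m_i} \bs_i^{(k)}$ with $\bs_i^{(k)} \in S(\varpi_i)$, and then split off $\bt_0 := \sum_i \sum_{k=1}^{r_i} \bs_i^{(k)} \in S(\lambda)$ and $\mathbf{u}_0 := \bs - \bt_0 \in S(\mu)$. This produces a pair in $S(\lambda) \times S(\mu)$ with $\binom{\bs}{\bt_0} > 0$, showing that the expansion above is nonzero for every $\bs \in S(\lambda+\mu)$.

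Linear independence then follows from a clean support argument: every contributing pair $(\bt, \mathbf{u})$ in the Leibniz expansion of $f^\bs (v^\bd_\lambda \otimes v^\bd_\mu)$ satisfies $\bt + \mathbf{u} = \bs$, so distinct $\bs \in S(\lambda + \mu)$ produce expansions supported on disjoint subsets of the tensor-product basis. A vanishing relation $\sum_\bs c_\bs \, f^\bs (v^\bd_\lambda \otimes v^\bd_\mu) = 0$ therefore forces the coefficient $c_\bs \binom{\bs}{\bt_0}$ at the basis vector $(f^{\bt_0} v^\bd_\lambda) \otimes (f^{\mathbf{u}_0} v^\bd_\mu)$ to vanish, giving $c_\bs = 0$ for every $\bs$. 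The one genuinely technical point is producing the Minkowski-compatible splitting with both pieces in $S(\lambda)$ respectively $S(\mu)$; this is immediate from the very definition $S(\lambda + \mu) = S(\varpi_1)^{+r_1 + m_1} + \cdots + S(\varpi_n)^{+r_n + m_n}$, but it requires carefully tracking the grouping of the fundamental-weight summands. Notably, no lexicographic ordering on monomials is needed for linear independence; the order $<$ from the preceding paragraph of the excerpt will be used in the companion spanning statement, not here.
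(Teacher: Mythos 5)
Your proof is correct, but it takes a genuinely different route from the one the paper invokes. The paper introduces the total order refining $\bd$ and the induced lexicographic order on monomials precisely because it imports the argument of \cite[Proposition 2.11]{FFL3} and \cite[Proposition 4]{FFR15}: there one shows that $f^{\bt+\mathbf{u}}\cdot(v_\lambda\otimes v_\mu)$ equals $f^{\bt}\cdot v_\lambda\otimes f^{\mathbf{u}}\cdot v_\mu$ plus terms that are strictly lower in the order, and concludes by triangularity. You instead work directly in $V^{\bd}(\lambda)\otimes V^{\bd}(\mu)$ and observe that once the inductive hypothesis (basis statement \emph{plus} monomiality of $I^{\bd}(\lambda)$ and $I^{\bd}(\mu)$) kills every Leibniz term with $\bt\notin S(\lambda)$ or $\mathbf{u}\notin S(\mu)$, the surviving terms for a fixed $\bs$ lie in $\{(\bt,\mathbf{u})\in S(\lambda)\times S(\mu)\mid \bt+\mathbf{u}=\bs\}$, and these index sets are pairwise disjoint as $\bs$ varies; nonvanishing of each vector then follows from $S(\lambda+\mu)=S(\lambda)+S(\mu)$ together with positivity of the binomial coefficient. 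This is cleaner and entirely order-free; the price is that it consumes the full inductive package including monomiality, whereas the order-based argument needs only that the essential monomials form a basis (which is why \cite{FFL3} can state it for favourable modules in general). Two minor points to tidy: the "base case $\lambda=0$ or $\mu=0$" is not immediate — it is the induction statement itself for the nonzero factor — but it is also never needed, since the induction always splits $\lambda+\mu$ into two nonzero summands and is anchored at height one by $\bd\in\ms_{\lm}$; and you should make explicit that for a monomial ideal the images of the monomials outside the ideal are linearly independent, which is the step that upgrades "basis indexed by $S(\lambda)$" to "$f^{\bt}\cdot v_\lambda^{\bd}=0$ exactly for $\bt\notin S(\lambda)$".
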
 

This set lies in the Cartan component of $V^{\bd}(\lambda) \otimes V^{\bd}(\mu)$ and since $|S(\lambda+\mu)| = \dim V(\lambda + \mu) $ this set is a basis of the Cartan component of $V(\lambda) \otimes V(\mu)$ and of $V(\lambda + \mu)$ respectively.
\begin{proposition}\label{Prop:eqzero}
If $\bs \notin S(\lambda + \mu)$, then $f^{\bs}\cdot v^{\bd}_{\lambda+\mu} = 0$ in $V^{\bd}(\lambda + \mu)$.
\end{proposition}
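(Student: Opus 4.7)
The strategy is induction on the height of $\nu := \lambda + \mu$, proving the assertion in the unified form: for every $\nu \in \mathcal{P}_+$ and every $\bs \in \N^{\Delta_+}$, if $\bs \notin S(\nu)$ then $f^{\bs}\cdot v^{\bd}_\nu = 0$. The base case $|\nu|=1$ is tautological: $\nu$ is a fundamental weight $\varpi_i$, and $S(\nu) = S^{\bd}(\varpi_i) = \{\ba : f^{\ba}\cdot v^{\bd}_{\varpi_i}\neq 0\}$ by definition. For the inductive step $|\nu|\geq 2$, I would fix any decomposition $\nu = \lambda' + \mu'$ with $\lambda', \mu' \in \mathcal{P}_+$ both nonzero; since $|\lambda'|,|\mu'| < |\nu|$, the inductive hypothesis applies to $\lambda'$ and $\mu'$ separately.

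The central tool is the classical Cartan embedding $V(\nu) \hookrightarrow V(\lambda')\otimes V(\mu')$, $v_\nu \mapsto v_{\lambda'}\otimes v_{\mu'}$. Because the coproduct on $U(\n^-)$ is a (filtered) algebra homomorphism, this embedding is filtered when the tensor product is equipped with the convolution filtration; passing to associated graded therefore yields an $S(\n^-)$-linear map
\[
\pi: V^{\bd}(\nu) \longrightarrow V^{\bd}(\lambda') \otimes V^{\bd}(\mu'), \qquad v^{\bd}_\nu \longmapsto v^{\bd}_{\lambda'} \otimes v^{\bd}_{\mu'},
\]
with image $W := S(\n^-)\cdot(v^{\bd}_{\lambda'} \otimes v^{\bd}_{\mu'})$. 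The preceding proposition forces $\dim W \geq |S(\nu)|$, while the hypothesis of Theorem \ref{Thm:Minkow} yields $|S(\nu)| = \dim V(\nu) = \dim V^{\bd}(\nu)$. Combined with the tautological surjection $V^{\bd}(\nu) \twoheadrightarrow W$, this pinches $\pi$ into an isomorphism, so $f^{\bs}\cdot v^{\bd}_\nu = 0$ if and only if $f^{\bs}\cdot(v^{\bd}_{\lambda'} \otimes v^{\bd}_{\mu'}) = 0$.

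The remaining computation uses the cocommutative coproduct of $S(\n^-)$ to expand
\[
f^{\bs}\cdot(v^{\bd}_{\lambda'} \otimes v^{\bd}_{\mu'}) = \sum_{\bt+\mathbf{u}=\bs} \binom{\bs}{\bt}\,(f^{\bt}\cdot v^{\bd}_{\lambda'}) \otimes (f^{\mathbf{u}}\cdot v^{\bd}_{\mu'}),
\]
where distinct decompositions $\bs = \bt + \mathbf{u}$ contribute to pairwise distinct weight components of the first tensor factor. Hence the sum vanishes iff every summand does, and by the inductive hypothesis each summand vanishes iff $\bt \notin S(\lambda')$ or $\mathbf{u} \notin S(\mu')$. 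This is precisely the condition $\bs \notin S(\lambda') + S(\mu')$, and the Minkowski identity $S(\lambda') + S(\mu') = S(\nu)$ (read off from the definition $S(\kappa) = \sum_i S(\varpi_i)^{+k_i}$) finishes the argument.

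I expect the main obstacle to be verifying the filtration-compatibility of the Cartan embedding at the level of $\bd$, so that $\pi$ is genuinely well-defined on the graded modules; the subsequent dimension squeeze and the weight-based separation of the coproduct summands are then routine.
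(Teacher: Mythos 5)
Your argument is correct for the implication actually claimed, and it reaches the conclusion by a route that reorganizes the paper's logic rather than reproducing it. The paper works in the ungraded module: it expands $f^{\bs}\cdot v_{\lambda+\mu}$ in the basis $\{f^{\bt}\cdot v_{\lambda+\mu}\mid\bt\in S(\lambda+\mu)\}$, pushes both sides into $V(\lambda)\otimes V(\mu)$, and uses the inductive hypothesis on each tensor factor to show that every $\bt$ occurring with nonzero coefficient satisfies $\deg_{\bd}(f^{\bt})<\deg_{\bd}(f^{\bs})$, whence the class of $f^{\bs}\cdot v_{\lambda+\mu}$ dies in the associated graded module. You instead pass to the graded side first: you build $\pi$ from the Cartan embedding, pinch it into an isomorphism onto $W$ via $\dim W\ge|S(\nu)|=\dim V(\nu)=\dim V^{\bd}(\nu)$, and then kill $f^{\bs}\cdot(v^{\bd}_{\lambda'}\otimes v^{\bd}_{\mu'})$ summand by summand. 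This is in substance the content of the paper's \emph{last} proposition of that section (monomiality of the Cartan component and its isomorphism with $V^{\bd}(\lambda+\mu)$), which the paper deduces \emph{after} Proposition \ref{Prop:eqzero}; your inversion of the order is legitimate because the isomorphism only needs the linear-independence proposition and the Minkowski hypothesis, not Proposition \ref{Prop:eqzero} itself. Your version is cleaner and more self-contained than the paper's degree bookkeeping, at the price of verifying that the Cartan embedding is filtered for the convolution filtration and that $\gr$ of the tensor product is the tensor product of the $\gr$'s --- which you correctly flag, and which does go through since $\Delta(f)=f\otimes1+1\otimes f$ preserves $\deg_{\bd}$.

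One genuine error, though an inessential one here: distinct decompositions $\bs=\bt+\mathbf{u}$ do \emph{not} land in pairwise distinct weight components of the first factor --- e.g.\ $\bt=e_{\alpha_1}+e_{\alpha_2}$ and $\bt'=e_{\alpha_1+\alpha_2}$ contribute to the same weight space of $V^{\bd}(\lambda')$ --- so ``the sum vanishes iff every summand does'' is unjustified as stated. Fortunately the Proposition only asserts the forward implication: for $\bs\notin S(\lambda')+S(\mu')$ \emph{every} decomposition has $\bt\notin S(\lambda')$ or $\mathbf{u}\notin S(\mu')$, so every summand vanishes by induction and the sum is $0$; no separation of summands is needed. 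If you wanted the converse (nonvanishing of $f^{\bs}\cdot v^{\bd}_\nu$ for $\bs\in S(\nu)$), you would have to argue as the paper does, via the linear-independence proposition, rather than by weight separation.
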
 
\begin{proof}

We fix $\bs \notin S(\lambda + \mu)$ and write
\begin{equation}\label{expansion}
f^{\bs}\cdot v_{\lambda + \mu} = \sum_{\bt \in S(\lambda+\mu)} c_{\bt} f^{\bt}\cdot v_{\lambda + \mu} \ \mbox{ in } V(\lambda + \mu).
\end{equation}

Since $V(\lambda + \mu) \subset V(\lambda) \otimes V(\mu)$ we have an expansion of the Equation \eqref{expansion}:
$$
f^{\bs}\cdot (v_\lambda \otimes v_{\mu}) = \!\!\!\!\!\! \sum_{\bt \in S(\lambda + \mu), \ \mathbf{a} + \mathbf{b} = \bt}\!\!\!\!\!\! c_\bt c_{\mathbf{a},\mathbf{b}} f^{\mathbf{a}}\cdot v_\lambda \otimes f^{\mathbf{b}}\cdot v_\mu \ \mbox{ in } V(\lambda) \otimes V(\mu).
$$
By replacing those $\mathbf{a} \notin S(\lambda)$ (resp. $\mathbf{b} \notin S(\mu)$) by a sum supported on $S(\lambda)$ (resp. $S(\mu)$) we obtain a unique expression. By induction, the corresponding monomials have strictly lower degrees then $\deg(f^{\mathbf{a}})$ (resp. $\deg(f^{\mathbf{b}})$). 
This implies that we have for all $\bt$ appearing in this unique expression $\deg(f^{\bt}) < \deg(f^{\bs})$.
\end{proof}
\begin{proposition}
The set $\mathcal{B} = \{f^{\bs}\cdot v_{\lambda+\mu}^{\bd} \mid \bs \in S(\lambda + \mu)\}$ is a basis of $V^{\bd}(\lambda + \mu)$.
\end{proposition}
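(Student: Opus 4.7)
The plan is to combine the preceding proposition (vanishing outside $S(\lambda+\mu)$) with a direct dimension count, sidestepping any delicate compatibility with the tensor product. The key observation is that the spanning and independence can be separated: Proposition~\ref{Prop:eqzero} already tells us enough about vanishing, and then the hypothesis of Theorem~\ref{Thm:Minkow} pins down the dimension.

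First I would show that $\mathcal{B}$ spans $V^{\bd}(\lambda+\mu)$. By construction, $V^{\bd}(\lambda+\mu)$ is a cyclic $S(\n^-)$-module with cyclic vector $v^{\bd}_{\lambda+\mu}$, so it is linearly spanned by the monomials $\{f^{\bs}\cdot v^{\bd}_{\lambda+\mu}\mid \bs\in \N^{\Delta_+}\}$. Proposition~\ref{Prop:eqzero} then kills every term with $\bs\notin S(\lambda+\mu)$, so we may restrict the spanning set to $\mathcal{B}$.

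Next I would compare dimensions. Since $V^{\bd}(\lambda+\mu)$ is by definition the graded module associated to a filtration on $V(\lambda+\mu)$, we have $\dim V^{\bd}(\lambda+\mu)=\dim V(\lambda+\mu)$. The standing assumption of Theorem~\ref{Thm:Minkow} (applied to the weight $\lambda+\mu$) gives $|S(\lambda+\mu)|=\dim V(\lambda+\mu)$. Consequently $|\mathcal{B}|\leq |S(\lambda+\mu)|=\dim V^{\bd}(\lambda+\mu)$, and a spanning family of cardinality at most the dimension of the ambient space must be a basis. This simultaneously forces the vectors in $\mathcal{B}$ to be nonzero and pairwise distinct.

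A cosmetic alternative would be to invoke the preceding proposition, which gives linear independence of $\{f^{\bs}\cdot(v^{\bd}_\lambda\ts v^{\bd}_\mu)\mid \bs\in S(\lambda+\mu)\}$ in $V^{\bd}(\lambda)\ts V^{\bd}(\mu)$, and pull it back along the cyclic map $V^{\bd}(\lambda+\mu)\to S(\n^-)\cdot(v^{\bd}_\lambda\ts v^{\bd}_\mu)$. Justifying that this is a well-defined surjection of $\bd$-graded $S(\n^-)$-modules is exactly the point the direct dimension count avoids, so I would prefer the argument above. I do not expect any real obstacle, since every technical ingredient has been established in the previous two propositions; the statement is essentially a packaging lemma.
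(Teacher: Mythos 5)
Your argument is correct and is exactly the intended reading of the paper's one-line proof: cyclicity plus Proposition~\ref{Prop:eqzero} give that $\mathcal{B}$ spans each graded component, and the standing hypothesis $|S(\lambda+\mu)|=\dim V(\lambda+\mu)=\dim V^{\bd}(\lambda+\mu)$ turns the spanning set into a basis by cardinality. No issues.
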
 
\begin{proof}
By considering each filtration component, this is a direct consequence of Proposition~\ref{Prop:eqzero}.
\end{proof}

We are left with proving (2), the monomiality of the annihilating ideal. This follows immediately from the following lemma.
\begin{proposition}
The defining ideal of the Cartan component of $V^{\bd}(\lambda) \otimes V^{\bd}(\mu)$ is monomial and there exists an $S(\n^-)$-module isomorphism from the Cartan component of $V^{\bd}(\lambda) \otimes V^{\bd}(\mu)$ to $V^{\bd}(\lambda + \mu)$.
\end{proposition}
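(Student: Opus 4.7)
The strategy is to exhibit a single monomial ideal $J \subset S(\n^-)$ that is simultaneously the defining ideal of both $V^{\bd}(\lambda+\mu)$ and the Cartan component $C$ of $V^{\bd}(\lambda) \otimes V^{\bd}(\mu)$; the $S(\n^-)$-module isomorphism sending $v_{\lambda+\mu}^{\bd} \mapsto v_\lambda^{\bd} \otimes v_\mu^{\bd}$ will then follow automatically. Define $J := \langle f^{\bs} \mid \bs \notin S(\lambda+\mu)\rangle$ and let $I_1$, $I_2$ denote the annihilators of the cyclic vectors of $V^{\bd}(\lambda+\mu)$ and $C$ respectively. The goal is to prove $J=I_1=I_2$.

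First I would verify that $S(\lambda+\mu)$ is downward closed under the componentwise order on $\mathbb{N}^{\Delta_+}$. Since $\bd \in \ms_{\lm}$, every $I^{\bd}(\varpi_i)$ is monomial, so each set of standard monomials $S(\varpi_i)$ is downward closed. A straightforward coordinate-wise redistribution shows that Minkowski sums preserve this property, so $S(\lambda+\mu) = S(\varpi_1)^{+m_1} + \cdots + S(\varpi_n)^{+m_n}$ is downward closed. Consequently the standard monomials of $J$ are exactly $\{f^{\mathbf{t}} \mid \mathbf{t} \in S(\lambda+\mu)\}$, whence $\dim(S(\n^-)/J) = |S(\lambda+\mu)| = \dim V(\lambda+\mu)$ by the hypothesis of Theorem \ref{Thm:Minkow}.

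Next I would establish the two inclusions $J \subset I_1$ and $J \subset I_2$. The first is exactly Proposition \ref{Prop:eqzero}. For the second, I note that the computation in the proof of Proposition \ref{Prop:eqzero} is really carried out inside the classical tensor product $V(\lambda) \otimes V(\mu)$ and yields an expansion of $f^{\bs}\cdot(v_\lambda \otimes v_\mu)$ into monomials of strictly lower degree in the filtration induced on $V(\lambda) \otimes V(\mu)$ from $\ff^{\bd}$. Passing to the associated graded $V^{\bd}(\lambda) \otimes V^{\bd}(\mu)$ therefore forces $f^{\bs}\cdot(v_\lambda^{\bd} \otimes v_\mu^{\bd}) = 0$ for every $\bs \notin S(\lambda+\mu)$, so $f^{\bs} \in I_2$.

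Finally, a dimension comparison closes the argument. From $J \subset I_1$ together with $\dim(S(\n^-)/I_1) = \dim V(\lambda+\mu) = \dim(S(\n^-)/J)$ one obtains $J = I_1$. Similarly, the first proposition of this subsection gives $\dim C \geq |S(\lambda+\mu)|$, while $J \subset I_2$ forces $\dim C \leq \dim(S(\n^-)/J) = |S(\lambda+\mu)|$, hence $J = I_2$. Both modules are therefore canonically isomorphic to $S(\n^-)/J$, the defining ideal of $C$ is monomial, and sending generator to generator yields the desired $S(\n^-)$-module isomorphism. The subtlest step is the second one: one must check carefully that the rewriting used in Proposition \ref{Prop:eqzero} genuinely lives in the filtered tensor product, so that it descends to the vanishing $f^{\bs}\cdot(v_\lambda^{\bd} \otimes v_\mu^{\bd}) = 0$ in the graded tensor product, not merely in $V^{\bd}(\lambda+\mu)$.
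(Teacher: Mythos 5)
Your proposal is correct and follows essentially the same route as the paper: show that every $f^{\bs}$ with $\bs\notin S(\lambda+\mu)$ kills $v_\lambda^{\bd}\otimes v_\mu^{\bd}$ in the graded tensor product (the paper does this by writing $f^{\bs}\cdot(v_\lambda\otimes v_\mu)=\sum_{\bt_1+\bt_2=\bs}f^{\bt_1}\cdot v_\lambda\otimes f^{\bt_2}\cdot v_\mu$ and using $S(\lambda+\mu)=S(\lambda)+S(\mu)$ together with the inductive vanishing of Proposition~\ref{Prop:eqzero}, which is a cleaner way to justify your step than appealing to the internals of that proposition's proof), then combine the linear-independence proposition with the dimension hypothesis to identify both defining ideals with the monomial span. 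Your preliminary verification that $S(\lambda+\mu)$ is an order ideal is a harmless extra step: once the annihilator is shown to equal the linear span of $\{f^{\bs}\mid\bs\notin S(\lambda+\mu)\}$ by the codimension count, that span is automatically an ideal, so downward closedness comes for free.
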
 
\begin{proof}
We have for $\bs \notin S(\lambda + \mu) = S(\lambda) + S(\mu)$:
\begin{equation*}\label{expansion2}
f^{\bs}\cdot (v_\lambda \otimes v_{\mu}) = \sum_{\bt_1 + \bt_2 = \bs} f^{\bt_1}\cdot v_\lambda \otimes f^{\bt_2}\cdot v_{\mu} \mbox{ in } V(\lambda) \otimes V(\mu)
\end{equation*}
and $\bt_1 \notin S(\lambda)$ or $\bt_2 \notin S(\mu)$. Hence by Proposition \ref{Prop:eqzero} we can conclude that either $f^{\bt_1}\cdot v_\lambda^{\bd} = 0$ in $V^{\bd}(\lambda)$ or $f^{\bt_2}\cdot v_\mu^{\bd} = 0$ in $V^{\bd}(\mu)$. We obtain
\begin{equation}\label{Eq:monomiality}
f^{\bs}\cdot (v_\lambda^{\bd} \otimes v_\mu^{\bd}) = 0 \ \mbox{ in } V^{\bd}(\lambda) \otimes V^{\bd}(\mu).
\end{equation}
Therefore we have monomiality. 
\par
By Proposition \ref{Prop:eqzero}, there is a surjective map of $S(\n^-)$-modules from the Cartan component of $V^{\bd}(\lambda) \otimes V^{\bd}(\mu)$ to $V^{\bd}(\lambda + \mu)$, which is an isomorphism for dimension reasons.\end{proof} 
This proves the monomiality statement (2) and hence Theorem \ref{Thm:Minkow}, i.e. $\bd \in \mathcal{S}_{\gm}$.

\section{Quantum groups and quantum degree cones}\label{sec3}
\subsection{Quantum groups}
Let $C=(c_{ij})_{n\times n}\in\Mat_n(\mathbb{Z})$ be the Cartan matrix of $\g$ and $D=\diag(d_1,\ldots,d_n)\in\Mat_n(\mathbb{Z})$ be a diagonal matrix symmetrizing $C$. Thus $A=DC=(a_{ij})_{n\times n}\in\Mat_n(\mathbb{Z})$ is symmetric. Let $U_q(\g)$ be the corresponding quantum group over $\mc(q)$: as an algebra, it is generated by $E_i$, $F_i$ and $K_i^{\pm 1}$ for $i=1,\ldots,n$, subject to the following relations: for $i,j=1,\ldots,n$,
$$K_iK_i^{-1}=K_i^{-1}K_i=1,\ \ K_iE_jK_i^{-1}=q_i^{c_{ij}}E_j,\ \ K_iF_jK_i^{-1}=q_i^{-c_{ij}}F_j,$$
$$E_iF_j-F_jE_i=\delta_{ij}\frac{K_i-K_i^{-1}}{q_i-q_i^{-1}},$$
and for $i\neq j$,
$$\sum_{r=0}^{1-c_{ij}}(-1)^r E_i^{(1-c_{ij}-r)}E_jE_i^{(r)}=0,\ \ \sum_{r=0}^{1-c_{ij}}(-1)^rF_i^{(1-c_{ij}-r)}F_jF_i^{(r)}=0,$$
where 
\[
q_i=q^{d_i},\, \, [n]_q!=\prod_{i=1}^n \frac{q^n-q^{-n}}{q-q^{-1}},\  \ E_i^{(n)}=\frac{E_i^n}{[n]_{q_i}!}\ \ \text{and}\ \ F_i^{(n)}=\frac{F_i^n}{[n]_{q_i}!}.
\]

Let $U_q(\mathfrak{n}^-)$ be the sub-algebra of $U_q(\g)$ generated by $F_i$ for $i=1,\ldots,n$. For $\lambda\in\mathcal{P}_+$, we denote by $V_q(\lambda)$ the finite dimensional irreducible representation of $U_q(\g)$ of highest weight $\lambda$ and type $1$ with highest weight vector $\mathbf{v}_\lambda$.
\par
When $q$ is specialized to $1$, the quantum group $U_q(\g)$ admits $U(\g)$ as its classical limit. In this limit, the representation $V_q(\lambda)$ is specialized to $V(\lambda)$.

\subsection{PBW root vectors and commutation relations}
Let $T_i=T_{i,1}''$, $i=1,\ldots,n$ be Lusztig's automorphisms:
$$T_i(E_i)=-F_iK_i,\ \ T_i(F_i)=-K_i^{-1}E_i,\ \ T_i(K_j)=K_jK_i^{-c_{ij}},$$
for $i=1,\ldots,n$, and $j\neq i$,
$$T_i(E_j)=\sum_{r+s=-c_{ij}}(-1)^rq_i^{-r}E_i^{(s)}E_jE_i^{(r)},\ \ T_i(F_j)=\sum_{r+s=-c_{ij}}(-1)^rq_i^{r}F_i^{(r)}F_jF_i^{(s)}.$$
For details, see Chapter 37 in \cite{Lus2}. We fix a reduced decomposition $\underline{w}_0=s_{i_1}\ldots s_{i_N}\in R(w_0)$ and let positive roots $\beta_1,\beta_2,\ldots,\beta_N$ be defined as in Section \ref{Sec:Notation}. The quantum PBW root vector $F_{\beta_t}$ associated to a positive root $\beta_t$ is defined by:
$$F_{\beta_t}=T_{i_1}T_{i_2}\ldots T_{i_{t-1}}(F_{i_t})\in U_q(\n^-).$$
The PBW theorem of quantum groups affirms that the set
$$\{F^\bold{s}:=F_{\beta_1}^{s_1}F_{\beta_2}^{s_2}\ldots F_{\beta_N}^{s_N}|\ \bold{s}=(s_1,\ldots,s_N)\in\mathbb{N}^N\}$$
forms a $\mc(q)$-basis of $U_q(\mathfrak{n}^-)$ (\cite{Lus2}, Corollary 40.2.2).
\par
The commutation relations between these quantum PBW root vectors are given by the following Levendorskii-Soibelman (L-S for short) formula:
for any $i<j$, 
\begin{equation}\label{Eq:LS}
F_{\beta_j}F_{\beta_i}-q^{-(\beta_i,\beta_j)}F_{\beta_i}F_{\beta_j}=\sum_{n_{i+1},\ldots,n_{j-1}\geq 0}c(n_{i+1},\ldots,n_{j-1})F_{\beta_{i+1}}^{n_{i+1}}\ldots F_{\beta_{j-1}}^{n_{j-1}},
\end{equation}
where $c(n_{i+1},\ldots,n_{j-1})\in\mathbb{C}[q^{\pm 1}]$. We denote 
$$M_{i,j} = \{F^{n_{i+1}}_{\beta_{i+1}}F^{n_{i+2}}_{\beta_{i+2}} \ldots F^{n_{j-1}}_{\beta_{j-1}} \mid n_{i+1}\beta_{i+1} + n_{i+2} \beta_2 + \dots + n_{j-1}\beta_{j-1} = \beta_i+\beta_j \},$$
then for weight reasons, the sum in the right hand side of \eqref{Eq:LS} is supported inside $M_{i,j}$. Denote by $M^q_{i,j}\subset M_{i,j}$ the set of monomials which actually appear with a non-zero coefficient in the right-hand side of \eqref{Eq:LS}. It should be pointed out that the right hand side of \eqref{Eq:LS} largely depends on the chosen reduced decomposition. In general it is hard to know which monomials appear in $M_{i,j}^q$.
\par
Let us have a closer look on how these formulas depend on the reduced decomposition. Let $\underline{w}_0$, $\underline{w}_0'\in R(w_0)$ be two reduced decompositions such that they are of form
$$\underline{w}_0=\underline{w}_Ls_ps_q\underline{w}_R,\ \ \underline{w}_0'=\underline{w}_Ls_qs_p\underline{w}_R$$
with $1\leq p\neq q\leq n$ and $s_ps_q=s_qs_p$. We define $l=\ell(\underline{w}_L)$. 
\par
Let the convex total order on $\Delta_+$ induced by $\underline{w}_0$ (resp. $\underline{w}_0'$) be:
$$\beta_1<\beta_2<\ldots<\beta_N\ \ (\text{resp.}\ \beta_1'<\beta_2'<\ldots<\beta_N').$$
For $s\leq l$, the L-S formula (\ref{Eq:LS}) reads:
\begin{equation}\label{Eq:LS1}
F_{\beta_{s}}F_{\beta_{l+2}}-q^{(\beta_s,\beta_{l+2})}F_{\beta_{l+2}}F_{\beta_{s}}=\sum_{n_{s+1},\ldots,n_{l+1}\geq 0}c(n_{s+1},\ldots,n_{l+1})F_{\beta_{s+1}}^{n_{s+1}}\ldots F_{\beta_{l+1}}^{n_{l+1}}.
\end{equation}
For $t\geq l+3$, the L-S formula (\ref{Eq:LS}) reads:
\begin{equation}\label{Eq:LS2}
F_{\beta_{t}}F_{\beta_{l+1}}-q^{-(\beta_t,\beta_{l+1})}F_{\beta_{l+1}}F_{\beta_{t}}=\sum_{n_{l+2},\ldots,n_{t-1}\geq 0}c(n_{l+2},\ldots,n_{t-1})F_{\beta_{l+2}}^{n_{l+2}}\ldots F_{\beta_{t-1}}^{n_{t-1}}.
\end{equation}

\begin{lemma}\label{Lem:samecone}
\it In the formula \eqref{Eq:LS1}, $n_{l+1}=0$; in the formula \eqref{Eq:LS2}, $n_{l+2}=0$.
\end{lemma}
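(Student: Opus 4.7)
The plan is to exploit the fact that the two reduced decompositions $\underline{w}_0$ and $\underline{w}_0'$ differ only by transposing two commuting simple reflections, and to invoke the uniqueness of the PBW expansion in $U_q(\mathfrak{n}^-)$.

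First I would observe that $s_p s_q = s_q s_p$ forces $c_{pq}=c_{qp}=0$; by the explicit formula for Lusztig's automorphisms this gives $T_p(F_q)=F_q$ and $T_q(F_p)=F_p$. Consequently the quantum PBW root vectors in the two decompositions agree up to swapping two adjacent entries:
\begin{equation*}
F_{\beta_{l+1}} = T_{\underline{w}_L}(F_p) = F_{\beta'_{l+2}}, \qquad F_{\beta_{l+2}} = T_{\underline{w}_L}(F_q) = F_{\beta'_{l+1}},
\end{equation*}
while $F_{\beta_k} = F_{\beta'_k}$ for every $k \notin \{l+1, l+2\}$. Moreover $(\beta_{l+1}, \beta_{l+2})=0$, so applying the L-S formula to these two adjacent PBW root vectors (with an empty intermediate range) yields $F_{\beta_{l+1}} F_{\beta_{l+2}} = F_{\beta_{l+2}} F_{\beta_{l+1}}$. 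In particular the $\underline{w}_0$-PBW basis and the $\underline{w}_0'$-PBW basis coincide as subsets of $U_q(\mathfrak{n}^-)$, so uniqueness of the PBW expansion applies verbatim to any element when expanded in either basis.

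For the first claim, I would apply the L-S formula in $\underline{w}_0'$ to the pair of indices $(s, l+1)$. The intermediate positions in $\underline{w}_0'$ are $s+1, \ldots, l$, and by the identifications above this produces
\begin{equation*}
F_{\beta_{l+2}} F_{\beta_s} - q^{-(\beta_s,\beta_{l+2})} F_{\beta_s} F_{\beta_{l+2}} = \sum_{n_{s+1},\ldots,n_l \geq 0} c'(n_{s+1},\ldots,n_l)\, F_{\beta_{s+1}}^{n_{s+1}}\cdots F_{\beta_l}^{n_l},
\end{equation*}
an expansion whose right-hand side contains no factor $F_{\beta_{l+1}}$. Up to a nonzero scalar this is the same commutator as the left-hand side of \eqref{Eq:LS1}, so after rearranging we obtain a second PBW expansion of that commutator in which only $F_{\beta_{s+1}},\ldots,F_{\beta_l}$ appear. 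By uniqueness of the $\underline{w}_0$-PBW expansion, this must match the expansion in \eqref{Eq:LS1} term by term, which forces $n_{l+1}=0$ whenever $c(n_{s+1},\ldots,n_{l+1})\neq 0$.

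The second claim follows by a symmetric argument: apply L-S in $\underline{w}_0'$ to the pair $(l+2, t)$, whose intermediate positions are $l+3, \ldots, t-1$. The resulting identity involves the same commutator as \eqref{Eq:LS2} but expands it in PBW monomials not containing $F_{\beta_{l+2}}$; uniqueness of the $\underline{w}_0$-PBW expansion then forces $n_{l+2}=0$. The only subtle point in either direction is the compatibility of the two PBW bases, which is precisely what the commutativity of $F_{\beta_{l+1}}$ and $F_{\beta_{l+2}}$ guarantees; beyond that, the proof is bookkeeping.
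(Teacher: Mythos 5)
Your proposal is correct and follows essentially the same route as the paper: identify the quantum PBW root vectors of $\underline{w}_0$ and $\underline{w}_0'$ (they agree except for the swap at positions $l+1$, $l+2$), apply the L-S formula in $\underline{w}_0'$ to the pair $(s,l+1)$ so that the right-hand side is supported away from $F_{\beta_{l+1}}$, and conclude by uniqueness of the PBW expansion. Your added justifications (that $c_{pq}=0$ gives $T_p(F_q)=F_q$, and that $F_{\beta_{l+1}}$ and $F_{\beta_{l+2}}$ commute so the two PBW bases coincide) are points the paper leaves implicit, but the argument is the same.
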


\begin{proof}
We prove for example the first statement, the second one can be shown similarly.
\par
First notice that for any $i\neq l+1$, $l+2$, $\beta_i=\beta_i'$, $\beta_{l+1}=\beta_{l+2}'$, $\beta_{l+2}=\beta_{l+1}'$. The same argument can be applied to quantum PBW root vectors: let $F_{\beta_1}, F_{\beta_2},\ldots,F_{\beta_N}$ (resp. $F_{\beta_1}', F_{\beta_2}',\ldots,F_{\beta_N}'$) be the quantum PBW root vectors obtained from $\underline{w}_0$
(resp. $\underline{w}_0'$). Then for any $i\neq l+1,l+2$, $F_{\beta_i}=F_{\beta_i}'$, $F_{\beta_{l+1}}=F_{\beta_{l+2}}'$, $F_{\beta_{l+2}}=F_{\beta_{l+1}}'$.
For $s\leq l$, we apply the L-S formula to $F_{\beta_{s}}'$ and $F_{\beta_{l+1}}'$, it gives:
$$F_{\beta_{s}}'F_{\beta_{l+1}}'-q^{(\beta_s',\beta_{l+1}')}F_{\beta_{l+1}}'F_{\beta_{s}}'=\sum_{m_{s+1},\ldots,m_{l}\geq 0}d(m_{s+1},\ldots,m_{l}) F_{\beta_{s+1}}'^{m_{s+1}}\ldots F_{\beta_{l}}'^{m_{l}}.$$
Comparing it to \eqref{Eq:LS1} gives $n_{l+1}=0$.
\end{proof}

\subsection{Quantum degree cones}
We fix in this paragraph a reduced decomposition $\underline{w}_0\in R(w_0)$ and positive roots $\beta_1,\ldots,\beta_N$ obtained from $\underline{w}_0$ as explained in Section \ref{Sec:Notation}. 

\begin{definition}
The \emph{quantum degree cone} associated to $\underline{w}_0$ is defined by:
$$\small \md^q_{\underline{w}_0}:=\{(d_{\beta})_{\beta\in\Delta_+}\in\mathbb{R}_{\geq 0}^{\Delta_+}\ |\ \text{$\forall i<j$, $d_{\beta_i}+d_{\beta_j}>\sum_{k=i+1}^{j-1}n_kd_{\beta_k}$ if $c(n_{i+1},\ldots,n_{j-1})\neq 0$ in \eqref{Eq:LS}}\}.$$
We denote the set
$$\md^q:=\bigcup_{\underline{w}_0\in R(w_0)}\md^q_{\underline{w}_0}\subset\mathbb{R}_{\geq 0}^{\Delta_+}.$$
\end{definition}

Let $\md\subset\mathbb{R}_{\geq 0}^{\Delta_+}$ be the classical degree cone. Specializing the quantum parameter $q$ to $1$ proves the following lemma:

\begin{lemma}\label{Lem:inclusion}
We have $\md^q\subset\md$.
\end{lemma}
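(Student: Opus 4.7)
The plan is to fix $\bd \in \md^q_{\underline{w}_0}$ for some reduced decomposition $\underline{w}_0$, and verify directly that the defining inequalities of the classical cone $\md$ hold. Pick any triple $\alpha,\beta,\gamma \in \Delta_+$ with $\alpha+\beta=\gamma$. Order the pair so that, in the convex total order induced by $\underline{w}_0$, $\alpha=\beta_i$ and $\beta=\beta_j$ with $i<j$. By the convexity of the order on $\Delta_+$ (cf.\ Papi), the positive root $\gamma = \beta_i+\beta_j$ must appear strictly between $\beta_i$ and $\beta_j$, i.e.\ $\gamma = \beta_k$ for some $i<k<j$.

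Next, apply the L--S formula \eqref{Eq:LS} and specialize $q\to 1$. Under this specialization $q^{-(\beta_i,\beta_j)}\to 1$, Lusztig's quantum PBW vectors $F_{\beta_t}$ specialize to classical PBW root vectors $f_{\beta_t}$, and the PBW monomials $F_{\beta_{i+1}}^{n_{i+1}}\cdots F_{\beta_{j-1}}^{n_{j-1}}$ specialize to the corresponding classical PBW monomials, which are linearly independent in $U(\n^-)$. Hence the specialized L--S formula reads
\[
[f_{\beta_j},f_{\beta_i}] \;=\; \sum_{n_{i+1},\ldots,n_{j-1}\geq 0} c(n_{i+1},\ldots,n_{j-1})\big|_{q=1}\, f_{\beta_{i+1}}^{n_{i+1}}\cdots f_{\beta_{j-1}}^{n_{j-1}}.
\]

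The classical bracket on the left is a nonzero multiple of $f_{\beta_k}$ since $\beta_k = \beta_i+\beta_j \in \Delta_+$. By the linear independence of classical PBW monomials, matching the coefficient of $f_{\beta_k}$ on both sides forces the single exponent tuple $(n_{i+1},\ldots,n_{j-1})$ with $n_k=1$ and all other $n_\ell=0$ to have $c(0,\ldots,1,\ldots,0)\big|_{q=1}\neq 0$. In particular, this Laurent polynomial in $\mathbb{C}[q^{\pm 1}]$ is nonzero, so the corresponding inequality is imposed in the definition of $\md^q_{\underline{w}_0}$:
\[
d_{\beta_i} + d_{\beta_j} \;>\; d_{\beta_k}, \quad \text{i.e.,}\quad d_\alpha + d_\beta > d_\gamma.
\]
Combined with $\bd \in \mathbb{R}_{\geq 0}^{\Delta_+}$, this yields $\bd \in \md$, proving $\md^q_{\underline{w}_0}\subset \md$ and hence $\md^q\subset \md$.

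I do not anticipate a serious obstacle: the only subtle point is ensuring the coefficient of the \emph{single}-variable monomial $F_{\beta_k}$ in the L--S expansion survives the specialization $q=1$, and this is guaranteed by the non-vanishing of the classical commutator $[f_{\beta_i},f_{\beta_j}]$ together with the linear independence of classical PBW monomials of weight $-\beta_k$ in the prescribed range.
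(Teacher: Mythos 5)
Your proof is correct and takes exactly the route the paper intends: the paper's entire proof is the one-line remark that specializing $q$ to $1$ in the L--S formula yields the claim, and your argument simply fills in the details (convexity placing $\gamma=\beta_k$ strictly between $\beta_i$ and $\beta_j$, and the non-vanishing of $c(0,\dots,1,\dots,0)$ at $q=1$ forced by the non-vanishing of the classical bracket and the linear independence of PBW monomials).
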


\begin{remark}
Except for small rank cases $\g=\lie{sl}_2$, $\lie{sl}_3$ (see Example \ref{Ss:QDC:A2}), the inclusion in Lemma \ref{Lem:inclusion} is strict. For example, the constant function $\bold{1}$ taking value $1$ on each positive root
is in the classical degree cone $\mathcal{D}$, but for $\g\neq\lie{sl}_2, \lie{sl}_3$, there is no reduced decomposition $\underline{w}_0$ such that $\bold{1}\notin \Dwq$. See for example \cite[Section 2.4]{FFR15} and Example \ref{Ss:QDC:C2} for type $\tt C_2$, Section \ref{Sec:G2} for type $\tt G_2$.
\end{remark}

Let $\bold{d}=(d_{\beta})_{\beta\in\Delta_+} \in  S(\Dwq) =:\Dwq  \cap \N^{\Delta_+}$. For a monomial $F^\bold{t}$ where $\bold{t}=(t_1,\ldots,t_N)$, we define its $\bold{d}$-degree $\ddeg_{\bold{d}}$ by:
$${\ddeg}_{\bold{d}}(F^\bold{t}):=t_1d_{\beta_1} + t_2d_{\beta_2}+\ldots + t_Nd_{\beta_N}.$$

\begin{theorem}\label{thm:polycone}
The set $\md_{\underline{w}_0}^q$ is a non-empty open polyhedral cone. \end{theorem}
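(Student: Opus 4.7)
The cone structure is essentially immediate from the definition, so the plan has two parts: (a) observe that $\Dwq$ is cut out by \emph{finitely many} strict linear inequalities; (b) produce one explicit point. For (a), each relation \eqref{Eq:LS} contributes one inequality per monomial actually appearing on the right, and by weight considerations every such monomial $F_{\beta_{i+1}}^{n_{i+1}}\cdots F_{\beta_{j-1}}^{n_{j-1}}$ satisfies $\sum_{k=i+1}^{j-1} n_k\beta_k=\beta_i+\beta_j$. Since $\beta_i+\beta_j$ is a fixed element of the root lattice with bounded simple-root coefficients and each $\beta_k$ is a positive root, there are only finitely many solutions $(n_{i+1},\ldots,n_{j-1})$ to this equation; since there are also only finitely many pairs $i<j$, the total collection of inequalities is finite. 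Hence $\Dwq$ is the intersection of the closed cone $\mathbb{R}_{\geq 0}^{\Delta_+}$ with finitely many open half-spaces, which is by definition an open polyhedral cone.

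The substantive step is non-emptiness, and I would prove it by writing down an explicit point. Set $d_{\beta_k}:=M^k$ for a single large real parameter $M$ to be chosen. For any inequality coming from \eqref{Eq:LS}, after dividing through by $M^j$ it reads
$$1+M^{i-j}\;>\;\sum_{k=i+1}^{j-1}n_k\,M^{k-j}.$$
All exponents $k-j$ on the right are strictly negative, so as $M\to\infty$ the right side tends to $0$ while the left side tends to $1$; therefore every individual inequality holds for sufficiently large $M$. Because the collection of inequalities is finite by part (a), a single $M$ (in fact a single integer $M$) suffices for all of them simultaneously, producing an element of $\Dwq\cap \mathbb{Z}^{\Delta_+}$.

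The only real obstacle is deciding on a construction that lies in $\Dwq$, since one cannot borrow from the classical cone: by the remark following Lemma~\ref{Lem:inclusion}, the naive constant $\bold 1\in\mathcal{D}$ typically fails for $\Dwq$ (indeed, whenever an L-S right-hand side contains a monomial of total length $\geq 2$ one has $\sum n_k d_{\beta_k}\geq 2=d_{\beta_i}+d_{\beta_j}$, violating strictness). The exponential choice $d_{\beta_k}=M^k$ forces the degrees to grow strictly along the convex order induced by $\underline{w}_0$, which is exactly the asymmetry needed to dominate any L-S tail supported on indices $k<j$. Once this point is in hand, the open polyhedral cone statement is a formal consequence of the finite-inequality description.
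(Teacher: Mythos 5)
Your proposal is correct and follows essentially the same strategy as the paper: both exploit the fact that the right-hand side of the L-S formula for the pair $(i,j)$ is supported on the finitely many monomials in $F_{\beta_{i+1}},\ldots,F_{\beta_{j-1}}$ (indices strictly less than $j$), and both produce an explicit point whose coordinates grow fast enough along the convex order induced by $\underline{w}_0$. The paper realizes this by a greedy induction, setting $d_{\beta_{k+1}}=1+\max$ of the $\bold{d}$-degrees of the monomials in $M^q_{k+1}$, while you use the closed-form geometric choice $d_{\beta_k}=M^k$ for a single sufficiently large $M$; this is only a cosmetic difference.
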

\begin{proof}
By definition, $\md^q_{\underline{w}_0}$ is an open polyhedral cone. We describe an inductive procedure to construct an element $\bold{d}=(d_{\beta_1},\ldots,d_{\beta_N})\in\md^q_{\underline{w}_0}$.
\par
We set $d_{\beta_1}=1$. Suppose that $d_{\beta_1},\ldots,d_{\beta_k}$ are chosen such that they satisfy the inequalities in the definition of $\md_{\underline{w}_0}^q$.
\par
Let $M_{k+1}^q:=\bigcup_{s=1}^k M_{s,k+1}^q$. Since $M_{k+1}^q$ is a finite set, we set 
$$d_{\beta_{k+1}}=1+\max_{F^\bold{t}\in M_{k+1}^q}(\deg_{\bold{d}}(F^\bold{t})).$$
Since $F^\bold{t}\in M_{k+1}^q$ is a monomial on $\{F_{\beta_1},\ldots,F_{\beta_k}\}$, the degree is well-defined. By definition, for any $1\leq s\leq k$ and any $F^\bold{t}\in M_{s,k+1}^q$, $d_{\beta_s}+d_{\beta_{k+1}}>\deg_{\bold{d}}(F^\bold{t})$. This terminates the proof.
\end{proof}

For $\bold{d}\in S(\md_{\underline{w}_0}^q)$, we define a filtration $\ff^\bold{d}_\bullet =(\ff_0^\bold{d}\subset \ff_1^\bold{d}\subset\ldots\subset \ff_n^\bold{d}\subset\ldots)$ on $U_q(\lie n^-)$  by:
$$\ff_k^\bold{d}U_q(\mathfrak{n}^-):=\sspan\{F^\bold{t}\ |\ {\ddeg}_{\bold{d}}(F^\bold{t}) \leq k\}.$$
Let $S_q(\lie n^-)$ be the algebra generated by $x_1,x_2,\ldots,x_N$, subjects to the following relations: for $1\leq i<j\leq N$,
$$x_ix_j=q^{(\beta_i,\beta_j)}x_jx_i.$$
The following proposition is clear from the L-S formula (\ref{Eq:LS}).
\begin{proposition}\label{Prop:qFiltgeneral}
\begin{enumerate}
\item The filtration $\ff_\bullet$ endows $U_q(\mathfrak{n}^-)$ with a filtered algebra structure.
\item The associated graded algebra ${\gr}_{\ff}U_q(\lie n^-)$ is  isomorphic to $S_q(\mathfrak{n}^-)$.
\end{enumerate}
\end{proposition}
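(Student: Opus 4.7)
The plan is to extract both statements directly from the L-S formula \eqref{Eq:LS} together with the defining inequalities of $\md^q_{\underline{w}_0}$. For (1), I would show $\ff_a^\bd \cdot \ff_b^\bd \subseteq \ff_{a+b}^\bd$ by an induction aimed at bringing an arbitrary product of PBW root vectors into PBW-ordered form. Fixing ordered monomials $F^\bs, F^\bt$ with $\ddeg_\bd(F^\bs)\le a$ and $\ddeg_\bd(F^\bt)\le b$, I rewrite $F^\bs F^\bt$ in the PBW basis by repeatedly applying L-S to adjacent out-of-order pairs $F_{\beta_j}F_{\beta_i}$ ($i<j$). The leading contribution $q^{-(\beta_i,\beta_j)}F_{\beta_i}F_{\beta_j}$ preserves the $\bd$-degree, while every correction monomial $F_{\beta_{i+1}}^{n_{i+1}}\cdots F_{\beta_{j-1}}^{n_{j-1}}$ with nonzero coefficient satisfies
\[
\sum_{k=i+1}^{j-1} n_k d_{\beta_k} \;<\; d_{\beta_i}+d_{\beta_j}
\]
by the very definition of $\md^q_{\underline{w}_0}$. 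Hence no swap ever raises the $\bd$-degree; since each correction is already PBW-ordered with indices strictly between $i$ and $j$, reinserting it into the surrounding product only produces inversions of strictly smaller span or strictly smaller degree, and a well-founded induction (on the pair consisting of total $\bd$-degree and remaining number of inversions) shows the procedure terminates in the PBW basis without ever exceeding $\bd$-degree $a+b$.

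For (2), I pass to the associated graded and track which terms of the L-S formula survive. Writing $\bar F_{\beta_i}$ for the class of $F_{\beta_i}$ in degree $d_{\beta_i}$ of $\gr_\ff U_q(\n^-)$, the right-hand side of \eqref{Eq:LS} lies in $\ff_{d_{\beta_i}+d_{\beta_j}-1}^\bd$ by the strict inequality above, so in the associated graded the relation collapses to
\[
\bar F_{\beta_j}\bar F_{\beta_i} \;=\; q^{-(\beta_i,\beta_j)}\bar F_{\beta_i}\bar F_{\beta_j}\qquad (i<j).
\]
These are exactly the defining relations of $S_q(\n^-)$ under $x_i\mapsto \bar F_{\beta_i}$, producing a surjective algebra homomorphism $\Phi\colon S_q(\n^-)\twoheadrightarrow \gr_\ff U_q(\n^-)$. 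For injectivity I compare bases: by Lusztig's quantum PBW theorem the ordered monomials $F^\bs$ form a $\mc(q)$-basis of $U_q(\n^-)$, and since the filtration was defined exactly via their $\bd$-degrees, $\ff_k^\bd$ has basis $\{F^\bs : \ddeg_\bd(F^\bs)\le k\}$ and the classes $\bar F^\bs$ give a basis of $\gr_\ff U_q(\n^-)$. As $S_q(\n^-)$ has the analogous ordered monomials in $x_1,\dots,x_N$ as a basis, $\Phi$ sends basis to basis and is therefore a bijection.

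The only genuine subtlety is the termination/degree-control argument in (1); once that is in hand, (2) is essentially bookkeeping about which terms survive in the graded quotient. No step of the proof uses anything beyond the L-S formula and the definitions of $\md^q_{\underline{w}_0}$ and $S_q(\n^-)$, which is why the authors note the result is ``clear'' from \eqref{Eq:LS}.
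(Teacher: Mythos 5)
Your argument is correct and is exactly the expansion of what the paper leaves implicit: the authors simply assert the proposition is ``clear from the L-S formula,'' and your rewriting argument (corrections drop strictly in $\bd$-degree by the defining inequalities of $\md^q_{\underline{w}_0}$, the leading term keeps the degree, well-founded induction on degree and inversion count) together with the basis comparison for (2) is the standard way to make that assertion precise. No gaps; the approach matches the paper's.
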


For $\lambda\in\mathcal{P}_+$, the above filtration on $U_q(\lie n^-)$ induces a filtration on $V_q(\lambda)$ by letting
$$\mathcal{F}_k^\bold{d}V_q(\lambda):=\mathcal{F}_k^\bold{d}U_q(\n^-)\cdot\bold{v}_\lambda.$$
We let $V_q^\bold{d}(\lambda)$ denote the associated graded vector space: it is a cyclic $S_q(\lie n^-)$-module. Let $\bold{v}_\lambda^\bold{d}$ be the cyclic vector corresponding to $\bold{v}_\lambda$.

\section{Examples and properties of quantum degree cones}\label{SS:Rank2}\label{sec4}

\subsection{Examples of rank \texorpdfstring{$2$}{2}}
Before studying properties of these cones, we examine some small rank examples.

\begin{example}\label{Ss:QDC:A2} 
Let $\lie g = \mathfrak{sl}_3$ be the Lie algebra of type $\tt A_2$. For $\bold{d}\in\mathcal{D}$, let $d_{i,j}=\bold{d}(\alpha_{i,j})$.

We fix a sequence of positive roots $(\alpha_{1,1},\alpha_{1,2},\alpha_{2,2})$. The classical degree cone $\md$ is given by: 
$$\md=\{(d_{1,1},d_{1,2},d_{2,2})\in\mathbb{R}_{\geq 0}^{\Delta_+}\mid d_{1,1}+d_{2,2}>d_{1,2}\}.$$
We consider the quantum degree cones: $R(w_0)=\{s_1s_2s_1,\ s_2s_1s_2\}$. For the reduced decomposition $\underline{w}_0 = s_1s_2s_1$, let $F_{1,1},\ F_{1,2},\ F_{2,2}$ be the corresponding quantum PBW root vectors. The formula \eqref{Eq:LS} reads
$$F_{1,1} F_{2,2} = q^{-1}F_{2,2} F_{1,1} - q^{-1}F_{1,2},$$
implying $\md_{\underline{w}_0}^q=\md$. For $\underline{w}'_0 = s_2s_1s_2$, the same computation shows that $\mathcal{D}^q_{\!\!\underline{w}'_0}=\mathcal{D}$.
\end{example}

\begin{example}\label{Ss:QDC:C2}
Let $\g=\lie{sp}_4$ be the Lie algebra of type $\tt C_2$.
\par
For $\bold{d}\in\mathcal{D}$, let $d_{i,j}:=\bold{d}(\alpha_{i,j})$ and $d_{i,\overline{j}}:=\bold{d}(\alpha_{i,\overline{j}})$. The classical degree cone $\md$ is given by the following inequalities in $\mathbb{R}_{\geq 0}^{\Delta_+}$:
$$d_{1,1}+d_{2,2}>d_{1,2},\ \ d_{1,1}+d_{1,2}>d_{1,\overline{1}}.$$

\par
Fix a reduced decomposition $\underline{w}_0=s_1s_2s_1s_2$ of the longest element $w_0$ in the Weyl group of $\g$. Let 
$$F_{1,1},\ \ F_{1,\overline{1}},\ \ F_{1,2},\ \ F_{2,2}$$ 
be the corresponding quantum PBW root vectors, their commutation relations are:
$$F_{1,1}F_{1,\overline{1}}=q^2F_{1,\overline{1}}F_{1,1},\ \ F_{1,1}F_{1,2}=F_{1,2}F_{1,1}-(q+q^{-1})F_{1,\overline{1}},\ \ F_{1,1}F_{2,2}=q^{-2}F_{2,2}F_{1,1}-q^{-2}F_{1,2},$$
$$F_{1,\overline{1}}F_{1,2}=q^2F_{1,2}F_{1,\overline{1}},\ \ F_{1,\overline{1}}F_{2,2}=F_{2,2}F_{1,\overline{1}}+(1-q^{-2})F_{1,2}^{(2)},\ \ F_{1,2}F_{2,2}=q^2F_{2,2}F_{1,2}.$$
The quantum degree cone $\Dwq\subset \md$ is given by:
\begin{equation}\label{Eq:C2}
d_{1,1}+d_{2,2}>d_{1,2},\ \ d_{1,1}+d_{1,2}>d_{1,\overline{1}},\ \ d_{2,2}+d_{1,\overline{1}}>2d_{1,2}.
\end{equation}

The same construction with the reduced decomposition $\underline{w}_0' = s_2s_1s_2s_1$ shows that $\Dwq=\mathcal{D}^q_{\!\!\underline{w}_0'}$.
\end{example}

In the rank $2$ case, the quantum degree cone does not depend on the reduced decomposition.

\begin{proposition}\label{Prop:UniqueCone}
Let $\lie g$ be a simple Lie algebra of rank no more than $2$. For any $\underline{w}_0\in R(w_0)$, we have $\md^q=\mathcal{D}_{\underline{w}_0}^q$.
\end{proposition}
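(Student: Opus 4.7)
The plan is a case-by-case verification across the four simple Lie algebras of rank at most $2$: $A_1$, $A_2$, $B_2 = C_2$, and $G_2$. Rank $1$ is vacuous, since $w_0 = s_1$ has a unique reduced decomposition and there is a single positive root.

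For each rank $2$ case, the Weyl group has exactly two reduced decompositions of $w_0$, of the form $\underline{w}_0 = s_{i_1} s_{i_2} s_{i_1}\cdots$ and $\underline{w}_0' = s_{i_2} s_{i_1} s_{i_2}\cdots$, each of length $m_{12}\in\{3,4,6\}$. A first observation is that the two convex orders on $\Delta_+$ are mutually reversed: if $\underline{w}_0$ yields $\beta_1<\beta_2<\cdots<\beta_N$, then $\underline{w}_0'$ yields $\beta_N<\cdots<\beta_1$. Since the set of positive roots lying strictly between $\beta_i$ and $\beta_j$ in a convex order is unchanged under reversal, the defining inequalities of $\Dwq$ and $\mathcal{D}^q_{\underline{w}_0'}$ are parametrised by the same unordered pairs and the same possible intermediate multi-indices. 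It therefore suffices to verify that the supports of the L-S coefficients $c(n_{i+1},\ldots,n_{j-1})$ coincide for the two reduced decompositions, and that the resulting linear inequalities are literally the same.

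For $A_2$ and $C_2$ this is already achieved by Examples \ref{Ss:QDC:A2} and \ref{Ss:QDC:C2}: the explicit commutation relations written there show that both reduced decompositions define the same inequalities, hence the same cone. For $G_2$ the same strategy applies, but requires producing the six quantum PBW root vectors for $\underline{w}_0 = s_1 s_2 s_1 s_2 s_1 s_2$ and for its reverse, applying the L-S formula to all $\binom{6}{2}=15$ ordered pairs, and reading off the resulting inequalities. This explicit computation is carried out in Section \ref{Sec:G2}, and comparing the two tables completes the proof.

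The main obstacle is the volume of the $G_2$ case: the L-S sums there involve multi-term expressions with products of up to four intermediate root vectors, and the cancellations must be tracked carefully to determine which coefficients actually vanish. A more structural proof, perhaps exploiting the anti-automorphism of $U_q(\n^-)$ defined by $F_i\mapsto F_i$ and reversal of multiplication, which should interchange the two PBW bases associated to reversed reduced decompositions up to rescaling and thereby transfer L-S relations between them, would bypass the case analysis, but is not needed at this rank.
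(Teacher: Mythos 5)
Your proposal is correct and follows essentially the same route as the paper: the proof there is likewise a case-by-case reduction, invoking Examples \ref{Ss:QDC:A2} and \ref{Ss:QDC:C2} for types $\tt A_2$ and $\tt C_2$ and deferring the $\tt G_2$ computation to Section \ref{Sec:G2}. Your additional observations about reversed convex orders and a possible anti-automorphism argument are reasonable but not part of, nor needed for, the paper's argument.
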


\begin{proof}
According to the above examples, it remains to consider the $\tt G_2$ case, which is given in Section \ref{Sec:G2}.
\end{proof}

\subsection{Properties of quantum degree cones}
The first property of the quantum degree cones we will prove  is the following:

\begin{theorem} \label{Thm:nullintersection}
Let $\lie g$ be a simple Lie algebra of rank $n \geq 3$, then
$$ \bigcap_{\underline{w}_0\in R(w_0)} \mathcal{D}_{\underline{w}_0}^q = \emptyset.$$
\end{theorem}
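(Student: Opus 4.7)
The plan is to exhibit, for every simple Lie algebra $\lie g$ of rank at least $3$, two reduced decompositions $\underline{w}_0, \underline{w}_0' \in R(w_0)$ whose quantum degree cones impose contradictory linear inequalities. The key combinatorial input is that rank $\geq 3$ is equivalent to the existence of four distinct positive roots $\alpha, \beta, \gamma, \delta \in \Delta_+$ with $\alpha + \beta = \gamma + \delta$; no such 4-tuple exists in rank $2$, consistent with Proposition \ref{Prop:UniqueCone}. The canonical instance in $\tt A_3$ is $\alpha_{12} + \alpha_{23} = \alpha_2 + \alpha_{123}$.

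I would first handle type $\tt A_3$ explicitly. From $\underline{w}_0 = s_1 s_2 s_1 s_3 s_2 s_1$, with convex order $(\alpha_1, \alpha_{12}, \alpha_2, \alpha_{123}, \alpha_{23}, \alpha_3)$, the Example in the paper furnishes $F_{23} F_{12} = F_{12} F_{23} - (q - q^{-1}) F_2 F_{123}$, so $\md^q_{\underline{w}_0}$ contains the inequality $d_{12} + d_{23} > d_2 + d_{123}$. Next I would take $\underline{w}_0' = s_3 s_1 s_2 s_3 s_1 s_2$, whose induced convex order is $(\alpha_3, \alpha_1, \alpha_{123}, \alpha_{12}, \alpha_{23}, \alpha_2)$; here the pair $(\alpha_{123}, \alpha_2)$ sits at positions $3$ and $6$ with $\alpha_{12}, \alpha_{23}$ strictly between, and the only weight-matching monomial in the L-S expansion is $F_{\beta_4}' F_{\beta_5}'$. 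Expanding $F_{\beta_3}' = T_3 T_1(F_2) = F_{12} F_3 - q F_3 F_{12}$ in the Chevalley generators, using $F_{\beta_6}' = \lambda F_2$ (as $\dim U_q(\n^-)_{-\alpha_2} = 1$) together with the identity above, one verifies that the L-S coefficient is a non-zero multiple of $(q^2 - 1)$. This produces the contradictory inequality $d_2 + d_{123} > d_{12} + d_{23}$, so $\md^q_{\underline{w}_0} \cap \md^q_{\underline{w}_0'} = \emptyset$ already in $\tt A_3$.

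For higher rank, the argument reduces to a rank-$3$ subsystem. If the Dynkin diagram of $\lie g$ contains an $\tt A_3$ subdiagram (so $\tt A_n$ with $n \geq 4$, $\tt D_n$, $\tt E_6$, $\tt E_7$, $\tt E_8$, and $\tt B_n, \tt C_n$ with $n \geq 4$), one appends any reduced decomposition of the coset element in $W/W_{\tt A_3}$ to each of the two $\tt A_3$ decompositions; the L-S relations among the first six PBW root vectors depend only on intermediate root vectors inside the $\tt A_3$ block, so the inequalities persist. The remaining types $\tt B_3, \tt C_3, \tt F_4$ contain no $\tt A_3$ subdiagram, and for these one uses analogous 4-tuples arising from the double bond — e.g.\ $\alpha_{12} + \alpha_{2, \overline{3}} = \alpha_2 + \alpha_{1, \overline{3}}$ in $\tt B_3$ — and constructs explicit reduced decompositions realising the opposite positional orderings of these four roots. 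The main obstacle throughout is verifying the non-vanishing of the L-S coefficient in the reversed convex order: because $U_q(\n^-)_{-\alpha_{123}}$ is four-dimensional, the two decompositions produce linearly independent PBW root vectors for $\alpha_{123}$, so one cannot reduce to a known vanishing of the standard commutator; the verification proceeds via the explicit Chevalley-generator calculation sketched above for $\tt A_3$, with analogous but more delicate computations for $\tt B_3, \tt C_3, \tt F_4$ involving higher quantum integers from the double bond.
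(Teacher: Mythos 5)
Your strategy is the same as the paper's: exhibit, for each rank-$3$ type, two reduced decompositions whose L-S relations impose contradictory inequalities, and reduce rank $>3$ to a rank-$3$ simple subalgebra by restricting degree functions to the subalgebra's roots (the paper's $\underline{w}_0^1=\underline{w}_L\underline{w}_R$, $\underline{w}_0^2=\underline{w}_L'\underline{w}_R'$ construction in the proof of Theorem~\ref{Thm:nullintersection}). Indeed your $\tt A_3$ pair is the paper's pair from Section~\ref{Sec:A3} up to the commuting swap $s_1s_3\leftrightarrow s_3s_1$ (which leaves the cone unchanged), and the relation whose coefficient you propose to verify by hand, $F'_{123}F'_{2}=F'_{2}F'_{123}+(q-q^{-1})F'_{12}F'_{23}$, is exactly the one recorded there. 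Two caveats. First, your advertised ``key combinatorial input'' is false: in type $\tt G_2$ the four distinct positive roots $(3\alpha_1+\alpha_2)+\alpha_2=(2\alpha_1+\alpha_2)+(\alpha_1+\alpha_2)$ give such a $4$-tuple, yet by Proposition~\ref{Prop:UniqueCone} all quantum degree cones coincide; so the existence of a $4$-tuple is neither equivalent to rank $\geq 3$ nor sufficient for contradictory cones, and the whole weight of the argument rests on the non-vanishing of the L-S coefficients in \emph{both} convex orders --- which you correctly identify as the main obstacle but only carry out for $\tt A_3$. Second, the $\tt B_3$ and $\tt C_3$ verifications (indispensable for $\tt B_3$, $\tt C_3$ and $\tt F_4$, whose diagrams contain no $\tt A_3$ subdiagram) are deferred as ``analogous but more delicate''; the paper does these explicitly in Sections~\ref{Sec:B3} and~\ref{Sec:C3}, and until they are done your proof is complete only for the types containing an $\tt A_3$ subdiagram.
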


\begin{proof}
We show that there exist two reduced decompositions $\underline{w}^1_0,\underline{w}^2_0\in R(w_0)$ such that $\mathcal{D}_{\underline{w}_0^1}^q\cap \mathcal{D}_{\underline{w}_0^2}^q=\emptyset$. When $\g$ is a simple Lie algebra $\g$ of rank $3$, this is proved in Section \ref{Sec:A3}, \ref{Sec:B3} and \ref{Sec:C3} by explicit constructions.
\par
Let $\g$ be a simple Lie algebra of rank $>3$. There exists a Lie sub-algebra $\g'\subset\g$ of rank $3$ such that $\g'$ is a simple Lie algebra, we denote it by $\tt X_3$. The set of positive roots of $\tt X_3$ is denoted by $\Delta_+'$. We take $\underline{w}_L$ and $\underline{w}_L'$ as in the example of $\tt X_3$ in Section \ref{Sec:A3}, \ref{Sec:B3} or \ref{Sec:C3}, such that $\mathcal{D}_{\underline{w}_L}^q\cap \mathcal{D}_{\underline{w}_L'}^q=\emptyset$. Let $\underline{w}_0^1=\underline{w}_L\underline{w}_R$ and $\underline{w}_0^2=\underline{w}_L'\underline{w}_R'\in R(w_0)$. We claim that $\mathcal{D}_{\underline{w}_0^1}^q\cap \mathcal{D}_{\underline{w}_0^2}^q=\emptyset$. Let $p:\mathcal{D}_{\underline{w}_0^1}^q\ra \mathbb{R}_{\geq 0}^{\Delta_+'}$ be the restriction of functions. Then by definition,
$$p(\mathcal{D}_{\underline{w}_0^1}^q)=\mathcal{D}_{\underline{w}_L}^q,\ \ p(\mathcal{D}_{\underline{w}_0^2}^q)=\mathcal{D}_{\underline{w}_L'}^q.$$
This terminates the proof.
\end{proof}

This theorem implies that there is no degree function working for all reduced decompositions. In general, to study the relations between the cones associated to $\underline{w}_0$ and $\underline{w}_0'\in R(w_0)$ is a difficult task. But in some cases, the cone remains to be the same.
\par
Two reflections $s_p$ and $s_q$ in $W$ with $p\neq q$ are said to be \emph{orthogonal} if $s_ps_q=s_qs_p$. Two reduced decompositions $\underline{w}_0$, $\underline{w}_0'\in R(w_0)$ are said to be \emph{related by orthogonal reflections} if one can be obtained from the other by using only orthogonal reflections.

\begin{proposition}
Let $\underline{w}_0$, $\underline{w}_0'\in R(w_0)$ such that they are related by orthogonal reflections. Then $\mathcal{D}_{\underline{w}_0}^q=\mathcal{D}_{\underline{w}_0'}^q$.
\end{proposition}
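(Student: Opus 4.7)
The plan is to reduce to a single orthogonal exchange and then match the defining inequalities of the two cones one by one. Since the equivalence relation generated by single swaps exhausts ``related by orthogonal reflections,'' it suffices to treat $\underline{w}_0 = \underline{w}_L s_p s_q \underline{w}_R$ and $\underline{w}_0' = \underline{w}_L s_q s_p \underline{w}_R$ with $s_p s_q = s_q s_p$; set $l = \ell(\underline{w}_L)$. As recalled in the proof of Lemma~\ref{Lem:samecone}, one has $\beta_i = \beta_i'$ and $F_{\beta_i} = F_{\beta_i}'$ for $i \notin \{l+1, l+2\}$, while $\beta_{l+1} = \beta_{l+2}'$ and $\beta_{l+2} = \beta_{l+1}'$. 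Moreover $(\beta_{l+1}, \beta_{l+2}) = (\alpha_p, \alpha_q) = 0$, so the L-S formula for the pair $(l+1,l+2)$ reduces to the commutation $F_{\beta_{l+1}} F_{\beta_{l+2}} = F_{\beta_{l+2}} F_{\beta_{l+1}}$. Both cones live in $\mathbb{R}^{\Delta_+}$, and the identifications yield $d_{\beta_i} = d_{\beta_i'}$ for $i \notin \{l+1,l+2\}$ and $d_{\beta_{l+1}} = d_{\beta_{l+2}'}$, $d_{\beta_{l+2}} = d_{\beta_{l+1}'}$; the task is to verify that the inequalities defining the two cones match under this identification.

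I would split the pairs $(i,j)$ into cases. Pairs entirely contained in $\{1,\ldots,l\}$ or in $\{l+3,\ldots,N\}$ produce identical L-S formulas in both decompositions, and the pair $(l+1,l+2)$ contributes no inequality in either cone because its index range is empty. For the ``adjacent'' pairs $(i,l+1)$ or $(i,l+2)$ with $i \leq l$ (and symmetrically $(l+1,t)$, $(l+2,t)$ with $t \geq l+3$), the pair in $\underline{w}_0$ corresponds under the swap to the pair in $\underline{w}_0'$ with $l+1$ and $l+2$ interchanged. Lemma~\ref{Lem:samecone}, together with its symmetric version obtained by exchanging the roles of $\underline{w}_0$ and $\underline{w}_0'$, ensures that the exponent of the ``crossed'' PBW generator vanishes in each expansion, so both L-S expansions are supported only on monomials in $F_{\beta_{i+1}},\ldots,F_{\beta_l}$. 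Matching left-hand sides then shows the two formulas are literally identical as equations in $U_q(\lie n^-)$, whence the same inequalities result.

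The subtle remaining case is $(i,j)$ with $i \leq l$ and $j \geq l+3$, where both $F_{\beta_{l+1}}^{n_{l+1}}$ and $F_{\beta_{l+2}}^{n_{l+2}}$ may appear on the right-hand side. The LHS of the $\underline{w}_0$- and $\underline{w}_0'$-formulas coincide under the above identifications, so the two RHS must be equal as elements of $U_q(\lie n^-)$. The commutativity $F_{\beta_{l+1}} F_{\beta_{l+2}} = F_{\beta_{l+2}} F_{\beta_{l+1}}$ allows one to reorder $F^{m_{l+1}}_{\beta_{l+1}'} F^{m_{l+2}}_{\beta_{l+2}'} = F^{m_{l+1}}_{\beta_{l+2}} F^{m_{l+2}}_{\beta_{l+1}}$ into $\underline{w}_0$-order without producing correction terms; comparing the expansions then gives a bijection between non-vanishing monomials that simply swaps $n_{l+1} \leftrightarrow n_{l+2}$, and the associated inequalities coincide after the dual swap of $d$-variables. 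The main obstacle, absent orthogonality, would be that such a reordering would introduce lower-weight correction terms generating spurious inequalities; the hypothesis $(\alpha_p,\alpha_q)=0$ is precisely what rules this out and, together with Lemma~\ref{Lem:samecone} for the adjacent cases, closes the argument.
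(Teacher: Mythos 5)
Your proof is correct and follows essentially the same route as the paper: reduce to a single orthogonal exchange and invoke Lemma~\ref{Lem:samecone} for the pairs straddling positions $l+1,l+2$. The paper's own proof is just that one-line reduction; your additional treatment of the crossing case $i\leq l$, $j\geq l+3$ (using $(\beta_{l+1},\beta_{l+2})=0$ to reorder without correction terms and match coefficients via linear independence of PBW monomials) is a detail the paper leaves implicit, and you handle it correctly.
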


\begin{proof}
By definition, it suffices to consider the case where
$$\underline{w}_0=\underline{w}_Ls_ps_q\underline{w}_R,\ \ \underline{w}_0'=\underline{w}_Ls_qs_p\underline{w}_R$$
with $1\leq p,q\leq n$ such that $s_ps_q=s_qs_p$. In this case, Lemma \ref{Lem:samecone} can be applied to finish the proof.
\end{proof}

\section{Local and global monomial sets}\label{sec5}

\subsection{Local monomial set for type \texorpdfstring{$\tt A_n$}{An}}
Let $\g=\lie {sl}_{n+1}$ be the Lie algebra of type $\tt A_n$.
\par
The following lemma gives an easy criterion to determine whether a degree is in the local monomial set.

\begin{proposition}\label{Prop:Anlm}
Let $\bold{d}\in S(\mathcal{D})$. The following statements are equivalent:
\begin{enumerate}
\item For any four different positive roots $\alpha$, $\beta$, $\gamma$, $\delta$ satisfying $\alpha+\beta=\gamma+\delta$, $d_\alpha+d_\beta\neq d_\gamma+d_\delta$; 
\item $\bold{d}\in \ms_{\lm}$.
\end{enumerate}
\end{proposition}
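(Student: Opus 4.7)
The plan is to prove both implications separately, using the minuscule structure of the fundamental representations of type $\tt A_n$.

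For $(2) \Rightarrow (1)$, I would argue by contraposition. Suppose there exist pairwise distinct positive roots $\alpha,\beta,\gamma,\delta\in\Delta_+$ with $\alpha+\beta=\gamma+\delta$ and $d_\alpha+d_\beta=d_\gamma+d_\delta$. In type $\tt A_n$, restricting to the sub-system spanned by the simple roots in the support of these four roots, every such quadratic relation falls into one of two standard shapes: the \emph{overlapping} shape $\alpha_{i,j}+\alpha_{k,l}=\alpha_{i,l}+\alpha_{k,j}$ with $i<k\leq j<l$, or the \emph{adjacent} shape $\alpha_{i,s}+\alpha_{s+1,l}=\alpha_{i,s'}+\alpha_{s'+1,l}$ with $s\neq s'$. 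For the overlapping shape I would realize both products $f_\alpha f_\beta\cdot v_{\varpi_2}$ and $f_\gamma f_\delta\cdot v_{\varpi_2}$ inside the one-dimensional weight space $\mathbb{C}\cdot(e_k\wedge e_{l+1})$ of $V(\varpi_2)=\Lambda^2\mathbb{C}^{n+1}$ and check directly that both are nonzero; the induced scalar relation becomes a non-monomial element of $I^{\mathbf{d}}(\varpi_2)$ precisely because the two monomials have equal $\mathbf{d}$-degree. For the adjacent shape, the same check is carried out inside the defining representation $V(\varpi_1)$, where both decompositions produce the same basis vector $e_{l+1}$ from $e_i$. Either way, $\mathbf{d}\notin\mathcal{S}_{\mathrm{lm}}$.

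For $(1)\Rightarrow(2)$, I invoke Corollary~\ref{Cor:useful}. Since every $V(\varpi_i)\cong\Lambda^i\mathbb{C}^{n+1}$ is minuscule, each weight space is at most one-dimensional, so monomiality of $I^{\mathbf{d}}(\varpi_i)$ at a weight $\mu$ is equivalent to $S_\mu$ having a unique element of minimum $\mathbf{d}$-degree. Iterating the classical cone inequality $d_{\alpha'}+d_{\alpha''}>d_{\alpha'+\alpha''}$ along successive splittings shows that any $\mathbf{s}\in S_\mu$ of non-minimal length has $\mathbf{d}$-degree strictly larger than some $\mathbf{s}'\in S_\mu$ of minimal length. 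Hence it suffices to analyze the minimum-length monomials in $S_\mu$, which correspond bijectively to the matchings $\pi\colon\{1,\dots,i\}\to J$ (where $J\subset\{1,\dots,n+1\}$ is the target subset determined by $\mu$, and $j_{\pi(k)}\geq k$) via $\mathbf{s}_\pi=\sum_k e_{\alpha_{k,\,j_{\pi(k)}-1}}$. Two matchings related by a single transposition of targets differ in their associated monomials by precisely one overlapping-type quadratic swap of four distinct positive roots, so by condition~(1) the two $\mathbf{d}$-degrees are distinct.

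The crux, and the main obstacle, is upgrading this local statement to the global assertion that $\pi\mapsto \deg_{\mathbf{d}}(\mathbf{s}_\pi)$ has a unique minimum: condition~(1) rules out cancellation between the two sides of an individual swap, but a priori it does not rule out that two non-adjacent matchings of equal degree are connected by a sequence of single swaps whose values sum to zero. The plan to overcome this is an exchange argument on the symmetric group of matchings into $J$: I would show that any two elements of $S_\mu$ can be joined by a sequence of elementary transpositions all of whose intermediate matchings stay inside $S_\mu$ (this uses the minuscule structure), and that a minimum matching admits no beneficial single-transposition move, so that a competing minimum would have to agree with it step-by-step. This is the type-$\tt A_n$ analogue of the rearrangement inequality: the minimum is attained at a canonically determined matching, pinned down by the local inequalities from condition~(1), and therefore unique. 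Establishing this combinatorial uniqueness is where the specific structure of type $\tt A_n$—rather than condition~(1) alone—is used decisively.
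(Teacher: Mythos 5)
Your overall architecture matches the paper's: $(1)\Rightarrow(2)$ via minusculeness of the fundamental representations together with Corollary~\ref{Cor:useful}, and $(2)\Rightarrow(1)$ by exhibiting a two-term relation in a one-dimensional weight space of a fundamental module. However, there are two genuine problems.

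First, your treatment of the ``adjacent shape'' $\alpha_{i,s}+\alpha_{s+1,l}=\alpha_{i,s'}+\alpha_{s'+1,l}$ in $(2)\Rightarrow(1)$ is incorrect. In $V(\varpi_m)$ the weight space of weight $\varpi_m-\alpha_{i,l}$ is one-dimensional and is already reached by the single root vector $f_{i,l}$, whose $\mathbf{d}$-degree is strictly smaller than that of any length-two product by the classical cone inequalities. Hence each monomial $f_{i,s}f_{s+1,l}$ is congruent, modulo lower filtration degree, to a multiple of $f_{i,l}\cdot v_{\varpi_m}$ and therefore lies in $I^{\mathbf{d}}(\varpi_m)$ individually, regardless of whether $d_{i,s}+d_{s+1,l}=d_{i,s'}+d_{s'+1,l}$. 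The relation you write down never obstructs monomiality; consistently, the paper's own example $\mathbf{d}'=(2,2,1,1,1,1)$ in type $\tt A_3$ satisfies $d_{1,1}+d_{2,3}=d_{1,2}+d_{3,3}$ yet is asserted to lie in $\mathcal{S}_{\lm}$. Condition (1) must be read only for the overlapping configuration $\alpha_{i,j}+\alpha_{k,l}=\alpha_{i,l}+\alpha_{k,j}$ with $i<k\leq j<l$, which is the only case the paper treats. Your argument for that configuration is essentially the paper's; note only that for $i<k$ one should work in $V(\varpi_k)$ (or $V(\varpi_j)$) of $\mathfrak{g}$ itself rather than in $V(\varpi_2)$ of a sub-system, since $\mathcal{S}_{\lm}$ is defined through the fundamental representations of the full algebra.

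Second, and more seriously, the step you yourself flag as the crux of $(1)\Rightarrow(2)$ is a real gap, and the exchange argument you propose cannot close it as stated. Strict inequalities for all single transpositions do not imply that $\pi\mapsto\deg_{\mathbf{d}}(\mathbf{s}_\pi)$ has a unique minimizer: two matchings differing by a $3$-cycle can have equal total degree even though every individual swap strictly changes the degree, since the three discrepancies can cancel around the cycle. Condition (1) controls only $2+2$ exchanges, not $3+3$ ones, so connectivity under transpositions plus ``no beneficial single swap'' does not force a competing minimizer to coincide step-by-step with a given one. The paper avoids this by invoking \cite[Proposition 2]{FFR15}, whose proof uses more of the interval structure of type $\tt A_n$ roots than the pairwise exchange inequalities alone. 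To complete your argument you would need either to import that proposition or to prove directly that for $\mathbf{d}\in\mathcal{D}$ satisfying (1) the cost array $(d_{\alpha_{q_a,p_b-1}})$ admits no two distinct minimizing matchings; this is precisely the content you have not supplied.
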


\begin{proof}
(1)$\Rightarrow$(2): Since in the $\tt A_n$ case, all fundamental representations are minuscule. The proof of \cite[Proposition 2]{FFR15} can be applied to show the validity of the hypothesis of Corollary \ref{Cor:useful}.\\
(2)$\Rightarrow$(1): Since $\g=\lie {sl}_{n+1}$, we can suppose that $\alpha=\alpha_{i,j}$, $\beta=\alpha_{k,l}$, $\gamma=\alpha_{i,l}$ and $\delta=\alpha_{j,k}$ with $i\leq k\leq j\leq l$. We consider the fundamental representation $V(\varpi_l)$: in $I(\varpi_l)$ there is a relation $f_{i,j}f_{k,l}v_{\varpi_l}\pm f_{i,l}f_{k,j}v_{\varpi_l}=0$. Since $I^\bold{d}(\varpi_l)$ is monomial, either $f_{i,j}f_{k,l}$ or $f_{i,l}f_{k,j}$ is in $I^\bold{d}(\varpi_l)$, which forbids the case $d_{\alpha_{i,j}}+d_{\alpha_{k,l}}=d_{\alpha_{i,l}}+d_{\alpha_{k,j}}$.
\end{proof}

\begin{example}
In general, the inclusions $\mathcal{D}_{\underline{w}_0}^q\subset\mathcal{S}_{\lm}$ and $\mathcal{S}_{\lm}\subset\mathcal{D}^q$ do not hold. Let $\mathfrak{g}$ be of type $\tt A_3$. The reduced decomposition $\underline{w}_0 = s_1s_2s_3s_2s_1s_2 \in R(w_0)$ induces the convex order on $\Delta_+$:
$$
\alpha_{1,1} < \alpha_{1,2} < \alpha_{1,3} < \alpha_{3,3} < \alpha_{2,3} < \alpha_{2,2}.
$$
We fix this sequence and identify $\mathbb{N}^{\Delta_+}$ with $\mathbb{N}^6$. Let $\bold{d} = (1,1,1,1,1,1)$, $\mathbf{d}'=(2,2,1,1,1,1)$ and $\bold{d}''=(1,1,1,1,1,2)$.  By Proposition \ref{Prop:Anlm}, $\bold{d}\notin\mathcal{S}_{\lm}$, $\bold{d}'$, $\bold{d}''\in\mathcal{S}_{\lm}$, but $\bold{d}$, $\bold{d}'\in\mathcal{D}_{\underline{w}_0}^q$, $\bold{d}''\notin\mathcal{D}^q$.
\end{example}

We show in the following example that $\mathcal{S}_{\gm}$ is in general a proper subset of $\mathcal{S}_{\lm}$.

\begin{example}
Let $\mathfrak{g}$ be of type $\tt A_n$ and let $\mathbf{d}$ be defined by $d_{\alpha_{i,j}} = 2^{(n-1) - (j - i)}$. It is clear that $\mathbf{d} \in \md$. If $\alpha_{i,j},\alpha_{k,l},\alpha_{i,l},\alpha_{k,j}$ are four different positive roots in $\Delta_+$ such that $\alpha_{i,j} + \alpha_{k,l} = \alpha_{i,l} + \alpha_{k,j}$, the indices must satisfy: $1\leq i<k\leq l<j\leq n$. In this case, we have $d_{\alpha_{i,j}} + d_{\alpha_{k,l}} > d_{\alpha_{i,l}} + d_{\alpha_{k,j}}$. Hence by Proposition \ref{Prop:Anlm} we have $\mathbf{d} \in \ms_{\lm}$.
\par
For arbitrary $1 \leq i \leq n$, let $P(\varpi_i)$ be the polytope obtained in \cite{BD}, such that its lattice points $S(\varpi_i):=P(\varpi_i)\cap\mathbb{N}^{\Delta_+}$ parametrizes a basis of $V(\varpi_i)$. Furthermore, by the choice of $\bd$, we have: 
$$S(\varpi_i)=\{ \bs \in \mathbb{N}^{\Delta_+} \mid f^{\bs}\cdot v_{\varpi_i}^{\bd} \neq 0 \mbox{ in } V^{\bd}(\varpi_i)\}.$$
But in general, for $\lambda=m_1\varpi_1+\ldots+m_n\varpi_n\in\mathcal{P}_+$, the Minkowski sum of lattice points $S(\varpi_1)^{+m_1}+\ldots+S(\varpi_n)^{+m_n}$ may not parametrize a basis of $V(\lambda)$. For instance, let $\g$ be of type $\tt A_4$, we have [\textit{loc.cit}]: $\#(S(\varpi_1)+S(\varpi_2)+S(\varpi_3)+S(\varpi_4))=1023$ but $\dim V(\varpi_1+\varpi_2+\varpi_3+\varpi_4)=1024$. Hence in general, $\bold{d}\notin\mathcal{S}_{\gm}$. 
\end{example}

\subsection{Global monomial sets: \texorpdfstring{$\tt A_n$}{An}}

We start with recalling the Dyck paths and FFLV polytopes \cite{FFL1}, \cite{FFL2}.
\par
A sequence ${\bf b}=(\delta_1,\dots,\delta_r)$ of 
positive roots is called a {\it Dyck path of type $\tt A_n$} if $\delta_1=\alpha_{i,i}$ and $\delta_r=\alpha_{j,j}$ for $i\leq j$ are
simple roots, and if $\delta_m=\alpha_{p,q}$, then $\delta_{m+1}=\alpha_{p+1,q}$ or $\delta_{m+1}=\alpha_{p,q+1}$.
\par
Let $A=\{1,2,\ldots,n,\overline{n-1},\ldots,\overline{1}\}$ be the totally ordered index set $1<2<\ldots<n<\overline{n-1}<\ldots<\overline{1}$. A {\it symplectic Dyck path} is a sequence ${\bf b}=(\delta_1,\dots,\delta_r)$ of positive roots (of $\lie {sp}_{2n}$) such that:
the first root is a simple root, $\beta_1=\alpha_{i,i}$; the last root is either a simple root $\beta_r= \alpha_{j,j}$ or
$\beta_r = \alpha_{j,\overline{j}}$ ($i \leq j \leq n$); if $\beta_m=\alpha_{r,q}$ with $r, q \in A$ then $\beta_{m+1}$ is
either $\alpha_{r,q+1}$ or $\alpha_{r+1,q}$, where $x+1$ denotes the smallest element in $A$ which is bigger than $x$.
\par
For a dominant integral weight $\lambda=\lambda_1\varpi_1+\lambda_2\varpi_2+\ldots+\lambda_n\varpi_n$ in the corresponding weight lattice, the FFLV polytopes $P_{\tt A_n}(\lambda)$ and $P_{\tt C_n}(\lambda)$ are defined by:

$$
\small
P_{\tt A_n}(\lambda)=\left\{ \mathbf m\in \mathbb R_{\ge 0}^{\Delta_+} \mid 
\begin{array}{c}
\hbox{for any $i=1,\ldots,n$ and any Dyck paths ${\bf b}=(\delta_1,\dots,\delta_r)$}\\ 
\hbox{starting in $\alpha_{i,i}$, ending in $\alpha_{j,j}: \sum_{\ell=1}^r m_{\delta_\ell}\le \lambda_i+\ldots+\lambda_j$}\\
\end{array}
\right\};
$$

$$
\small
P_{\tt C_n}(\lambda)=\left\{\mathbf m\in \mathbb R_{\ge 0}^{\Delta_+} \mid 
\begin{array}{c}
\hbox{for any $i=1,\ldots,n$ and any symplectic Dyck paths ${\bf b}=(\delta_1,\dots,\delta_r)$}\\ 
\hbox{starting in $\alpha_{i,i}$, ending in $\alpha_{j,j}: \sum_{\ell=1}^r m_{\delta_\ell}\le \lambda_i+\ldots+\lambda_j$};\\
\hbox{for any $i=1,\ldots,n$ and any symplectic Dyck paths ${\bf b}=(\delta_1,\dots,\delta_r)$ }\\ 
\hbox{starting in $\alpha_{i,i}$, ending in $\alpha_{j,\overline{j}}: \sum_{\ell=1}^r m_{\delta_\ell}\le \lambda_i+\ldots+\lambda_n$}
\end{array}
\right\}.
$$

Let $S_{\tt A_n}(\lambda)$ and $S_{\tt C_n}(\lambda)$ denote the set of lattice points in the corresponding polytopes. 
It has been shown in \cite{FFL1} and \cite{FFL2} that the polytopes satisfy for all $\lambda = \lambda_1 + \lambda_2$:
\begin{equation}\label{Eq:AC}
S_{\tt A_n}(\lambda) = S_{\tt A_n}(\lambda_1) + S_{\tt A_n}(\lambda_2) \text{ and } S_{\tt C_n}(\lambda) = S_{\tt C_n}(\lambda_1) + S_{\tt C_n}(\lambda_2) 
\end{equation}

Denote $d_{i,j}:=d_{\alpha_{i,j}}$ and $d_{i,\overline{j}}:=d_{\alpha_{i,\overline{j}}}$.
\par
For $\tt A_n$, consider $\bold{d}\in\md$ defined by: $d_{i,j}=(j-i+1)(n-j+1)$, then the following theorem has been proved in \cite{FFR15}:
\begin{theorem}
\begin{enumerate}
\item We have $\bold{d}\in\ms_{\gm}$. Moreover, let $\underline{w}_0=(s_n\ldots s_1)(s_n\ldots s_2)\ldots (s_ns_{n-1})s_n$, then $\bold{d}\in\Dwq$.
\item The set $\{f^\bold{a}\cdot v_\lambda^\bold{d}\mid \bold{a}\in S_{\tt A_n}(\lambda)\}$ forms a monomial basis of $V^\bold{d}(\lambda)$.
\end{enumerate}
\end{theorem}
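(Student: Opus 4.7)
The plan is to apply the Minkowski sum criterion (Theorem~\ref{Thm:Minkow}), which reduces the statement $\bd\in\ms_{\gm}$ together with the basis claim in (2) to two verifications: first, that $\bd\in\ms_{\lm}$, and second, that for every fundamental weight the support set $S^{\bd}(\varpi_i)$ can be identified with the FFLV lattice set $S_{\tt A_n}(\varpi_i)$. Combined with the Minkowski decomposition~\eqref{Eq:AC} and the equality $|S_{\tt A_n}(\lambda)|=\dim V(\lambda)$ from \cite{FFL1}, the full statement $\bd\in\ms_{\gm}$ and the explicit basis in (2) drop out simultaneously. The inclusion $\bd\in\Dwq$ for the specified reduced word is a separate combinatorial check against the Levendorskii--Soibelman relations.

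For the first verification, Proposition~\ref{Prop:Anlm} reduces matters to showing, for each quadruple of distinct positive roots $\alpha_{i,j},\alpha_{k,l},\alpha_{i,l},\alpha_{k,j}$ with $\alpha_{i,j}+\alpha_{k,l}=\alpha_{i,l}+\alpha_{k,j}$ (which forces $1\leq i<k\leq j<l\leq n$), that the associated degrees never balance. Substituting $d_{p,q}=(q-p+1)(n-q+1)$ and simplifying yields
\[
(d_{i,j}+d_{k,l})-(d_{i,l}+d_{k,j})=(k-i)(l-j)>0,
\]
so $\bd\in\ms_{\lm}$. For the second verification, I would exploit the minusculity of $V(\varpi_i)$: every PBW monomial $f^{\bs}\cdot v_{\varpi_i}$ is either zero or proportional to a standard weight basis vector, and the combinatorics of Dyck paths indexing $S_{\tt A_n}(\varpi_i)$ parametrizes precisely the lowest $\bd$-degree representative in each weight space. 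This is exactly where the multiplicative form $d_{i,j}=(j-i+1)(n-j+1)$ does its work: among all monomials of fixed weight, the FFLV one is strictly smallest, giving $S^{\bd}(\varpi_i)=S_{\tt A_n}(\varpi_i)$. Plugging into Theorem~\ref{Thm:Minkow} and using~\eqref{Eq:AC} closes parts (1) and (2) together.

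It remains to verify $\bd\in\Dwq$ for $\underline{w}_0=(s_n\cdots s_1)(s_n\cdots s_2)\cdots(s_ns_{n-1})s_n$. Here I would first unwind the convex total order on $\Delta_+$ by applying $\beta_t=s_{i_1}\cdots s_{i_{t-1}}(\alpha_{i_t})$ to this reduced word, and then enumerate the nonzero terms on the right-hand side of~\eqref{Eq:LS} for each pair $\beta_i<\beta_j$, checking the inequality $d_{\beta_i}+d_{\beta_j}>\sum_k n_k d_{\beta_k}$ case by case. The multiplicative form of $\bd$ reduces each such check to an elementary arithmetic identity (the calculation of the second paragraph is the prototypical case), but the main technical obstacle is the bookkeeping: one must control the convex order carefully enough to guarantee that every L-S relation yields the inequality in the correct direction and never the opposite one. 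This is the unique place where the explicit interplay between the chosen reduced word and the chosen degree function is essential.
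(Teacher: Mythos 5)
The paper does not actually prove this theorem; it is quoted from \cite{FFR15}. Your outline reconstructs that argument correctly and is consistent with the machinery the paper sets up for exactly this purpose: Proposition~\ref{Prop:Anlm} for membership in $\ms_{\lm}$, Theorem~\ref{Thm:Minkow} together with the Minkowski property \eqref{Eq:AC} and $\#S_{\tt A_n}(\lambda)=\dim V(\lambda)$ for the upgrade to $\ms_{\gm}$ and the explicit basis. Your computation $(d_{i,j}+d_{k,l})-(d_{i,l}+d_{k,j})=(k-i)(l-j)\neq 0$ is correct and cleanly disposes of the local step.

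Two substantive steps remain unproved, however. First, the identification $S^{\bd}(\varpi_i)=S_{\tt A_n}(\varpi_i)$ is asserted via ``the FFLV monomial is strictly smallest in each weight space,'' but this is precisely the nontrivial content: one must run an explicit exchange argument on monomials of a fixed weight (compare the two-step argument the paper carries out in full for type $\tt C_n$, where exactly this minimality is the bulk of the work), and minusculity alone does not deliver it. Second, the claim $\bd\in\Dwq$ for $\underline{w}_0=(s_n\cdots s_1)\cdots s_n$ is entirely deferred to ``bookkeeping.'' The difficulty is not merely clerical: one needs to know which coefficients $c(n_{i+1},\ldots,n_{j-1})$ in \eqref{Eq:LS} are nonzero, which the paper itself warns is hard in general. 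In \cite{FFR15} this is resolved structurally, not by enumeration: the chosen reduced word is adapted to the equioriented $\tt A_n$ quiver, the nonvanishing terms in the L-S relations are controlled by the Hall algebra, and $d_{i,j}=(j-i+1)(n-j+1)$ is the dimension of a Hom-space between the corresponding indecomposables, so the required inequalities follow from homological identities. Without either that input or an actual case analysis, the second half of statement (1) is unestablished.
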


\subsection{Global monomial sets: \texorpdfstring{$\tt C_n$}{Cn}}

Let us consider the $\tt C_n$ case and $\bold{d}\in\md$ defined by: 
$$d_{i,j} = (2n-j)(j-i+1),\ \ d_{i,\overline{j}} = j(2n - i - j +1).$$
This degree arises from an embedding of $\mathfrak{g}$ into a Lie algebra of type $\tt{A}_{2n-1}$.
We will show that $\mathbf{d} \in \ms_{\gm}$ and moreover, the monomial basis of $V^{\mathbf{d}}(\lambda)$ is parametrized by $S_{\tt C_n}(\lambda)$. For this we need an explicit description of the monomials associated to $S_{\tt C_n}(\varpi_k)$ from \cite{FFL2}:
\[
\{ f_{i_1, j_{\ell}-1} \cdots f_{i_{\ell}, j_{1}-1} \mid 1 < i_1 < \ldots < i_{\ell} \leq k  \leq j_{1} < \ldots< j_\ell   \}.
\]
\begin{lemma}
The degree function $\mathbf{d}\in\ms_{\lm}$ and for any fundamental weight $\varpi_k$, \[ \{f^\bold{a}\cdot v_{\varpi_k}^\bold{d}\mid \bold{a}\in S_{\tt C_n}(\varpi_k)\} \text{ is a basis of } V^\bold{d}(\varpi_k).\]
\end{lemma}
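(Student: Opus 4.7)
The strategy is to verify the hypothesis of Corollary~\ref{Cor:useful} for each $\lambda=\varpi_k$ and then to identify the resulting monomial basis with the FFLV-set. By \cite{FFL2}, the set $\{f^\bs\cdot v_{\varpi_k}\mid \bs\in S_{\tt C_n}(\varpi_k)\}$ is already a basis of the classical module $V(\varpi_k)$, so for every weight $\mu$ of $V(\varpi_k)$ the number of FFLV-tuples of weight $\mu$ equals the multiplicity $r_\mu$; once monomiality of $I^\bd(\varpi_k)$ is established, both conclusions of the lemma follow.

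The crucial step is to verify hypothesis (2) of Corollary~\ref{Cor:useful}: for every such $\mu$ and any two distinct $\bs,\bs'\in\mathbb{N}^{\Delta_+}$ with $\sum_\alpha s_\alpha\alpha=\sum_\alpha s'_\alpha\alpha=\varpi_k-\mu$, we must show $\deg_\bd(f^\bs)\neq\deg_\bd(f^{\bs'})$. Equivalently, $\bd$ should not vanish on any nonzero integral relation $\sum_\alpha m_\alpha\alpha=0$ among the positive roots. Such primitive relations come in three flavours: the type-$\tt A$ quadrilateral relations $\alpha_{i,j}+\alpha_{k,l}=\alpha_{i,l}+\alpha_{k,j}$, the triangle relations $\alpha_{i,j}+\alpha_{j+1,k}=\alpha_{i,k}$ together with their long-root analogues $\alpha_{i,j}+\alpha_{j+1,\overline{l}}=\alpha_{i,\overline{l}}$, and the long-root splittings $\alpha_{i,\overline{j}}=\alpha_{i,n}+\alpha_{j,n-1}=\alpha_{i,n-1}+\alpha_{j,n}$ for $j<n$. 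Substituting the closed-form expressions $d_{i,j}=(2n-j)(j-i+1)$ and $d_{i,\overline{j}}=j(2n-i-j+1)$ reduces each case to a short polynomial identity; for instance the last family yields $(d_{i,n-1}+d_{j,n})-(d_{i,n}+d_{j,n-1})=j-i$, which is nonzero whenever the two decompositions actually differ.

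Once injectivity is established, Corollary~\ref{Cor:useful} gives $\bd\in\ms_{\lm}$ and provides the monomial basis of $V^\bd(\varpi_k)$ as the $\bd$-degree-minimum representative in each weight space. To identify these minima with the FFLV-tuples, we exploit that $\bd$ is the pullback of the $\tt A_{2n-1}$-degree function of \cite{FFR15} under the folding $\alpha_{i,j}\mapsto\alpha_{i,j}$, $\alpha_{i,\overline{j}}\mapsto\alpha_{i,2n-j}$, and that the $\tt C_n$-FFLV tuples correspond under this map to the $\tt A_{2n-1}$-FFLV tuples, whose degree-minimality was established in \cite{FFR15}.

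The main obstacle will be the case analysis underlying the injectivity check: the quadrilateral family is covered by the type-$\tt A$ argument of Proposition~\ref{Prop:Anlm}, but the long-root splittings are genuinely type-$\tt C$ specific, and several degenerate configurations (such as $j=i$ or $j=n-1$) have to be inspected separately. The verification in each case is elementary arithmetic in the two closed-form degrees above, but it is bookkeeping-intensive.
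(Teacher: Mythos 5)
There is a genuine gap at the crucial step. You reduce the hypothesis of Corollary~\ref{Cor:useful} to the claim that $\bd$ does not vanish on any nonzero integral relation among the positive roots, and assert this can be checked on a short list of primitive relations. That reduction is wrong in both directions, and the claim itself is false for this $\bd$: already for $n=3$ one has $\alpha_{1,1}+\alpha_{2,3}=\alpha_{1,2}+\alpha_{3,3}\ (=\alpha_{1,3})$ while $d_{1,1}+d_{2,3}=(2n-1)+2(2n-3)=6n-7=2(2n-2)+(2n-3)=d_{1,2}+d_{3,3}$. So the degree function does \emph{not} separate all same-weight monomials, and your program collapses. (This coincidence is harmless only because neither $f_{1,1}f_{2,3}$ nor $f_{1,2}f_{3,3}$ acts nontrivially on any $v_{\varpi_k}$ --- which is precisely why the criterion must be restricted to monomials surviving on the highest weight vector; Corollary~\ref{Cor:useful} never asks for more, and for one-dimensional weight spaces it only asks that the \emph{minimum} be uniquely attained.) Your list of primitive relations is also incomplete for the cases that do matter: to single out $f_{i,\overline{p}}$ one must compare it against all the splittings $f_{i,q-1}f_{p,\overline{q}}$ and $f_{i,\overline{q}}f_{p,q-1}$ for every $q=k+1,\dots,n$, plus $f_{i,n}f_{p,n}$, not just the $q=n$ instance you record.

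The paper's proof is structured quite differently and avoids these issues: it realizes $V(\varpi_k)$ inside $\bigwedge^k\mathbb{C}^{2n}$, restricts to monomials in $M_k=\{f_{i,j}\mid i\le k\le j\}$ (everything else kills $v_{\varpi_k}$), and proves only that for each target basis wedge there is a \emph{unique monomial of minimal degree} producing it --- first for the natural representation by an explicit chain of inequalities $Y(i)>\cdots>Y(n)=X(n)>\cdots>X(i)>\deg_{\bd}f_{i,\overline{p}}$ over all the splittings above, then for $\bigwedge^k$ by a permutation-exchange argument showing any "ascent" in the factorization can be swapped to strictly lower the degree. This identifies the minimal monomials with the FFLV tuples directly, rather than via the folding into $\tt A_{2n-1}$, which you invoke without justifying that minimality transports along the (non-trivial) inclusion $V_{\tt C_n}(\varpi_k)\subset\bigwedge^k\mathbb{C}^{2n}$ versus $V_{\tt A_{2n-1}}(\varpi_k)=\bigwedge^k\mathbb{C}^{2n}$. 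To repair your argument you would need to replace the global non-degeneracy claim by exactly this kind of restricted, representation-level minimality analysis.
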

\begin{proof}
We need to show that the annihilating ideal of $ V^\bold{d}(\varpi_k)$ is monomial for all $\varpi_k$.  We start with the natural representation, namely the vector space $\mathbb{C}^{2n}$ with basis $\{e_1, \ldots, e_n, e_{\overline{n}}, \ldots, e_{\overline{1}}\}$ and operation $f_{i,j} e_{\ell} =   \delta_{i,\ell} c_{i,j}  e_{j+1}$ for some $c_{i,j} \in \mathbb{C}^*$ (when $j = \overline{p}$ we set $j+1 := \overline{p-1}$). We will further use that we can identify $V(\varpi_k)$ uniquely with a submodule in $\bigwedge^k \mathbb{C}^{2n}$. 
\par
First of all, since $\mathbf{d} \in \mathcal{D}$, we see that we can restrict ourselves to the nilpotent radical of the fundamental weight $\varpi_k$ (since all other root vectors are acting by $0$ on $v_{\varpi_k} \in V(\varpi_k)$ and hence on $V^{\mathbf{d}}(\varpi_k)$), \textit{e.g}. we have to consider monomials in $M_k := \{f_{i,j} \mid i \leq k \leq j \}$ only. We will prove the lemma in two steps: 
\begin{enumerate}
\item For any $i \leq k < j$, there exists a unique monomial $\mathbf{m}$  in the variables from $M_k$ with minimal degree such that $\mathbf{m}\cdot e_i = e_j$.
\item For any $j_1 < j_2 <  \ldots < j_k$ with $e_{j_1} \wedge \ldots \wedge e_{j_k} \in V(\varpi_k)$, there exists a  unique monomial $\mathbf{m}$ in the variables from $M_k$ with minimal degree such that $\mathbf{m}\cdot e_1 \wedge e_2 \wedge \ldots \wedge e_k  = e_{j_1} \wedge e_{j_2}  \wedge \ldots \wedge e_{j_k}$.
\end{enumerate}
Then the second step implies the Lemma.
\par
We start with proving $(1)$. Suppose $i \leq k \leq j \leq \overline{k}$, for weight reasons, there exists a unique monomial $\mathbf{m}$ in the variables from $M_k$ such that $\mathbf{m}\cdot e_i = e_j$, namely $\mathbf{m} = f_{\alpha_i,j-1}$.\\
Suppose $i \leq k  \leq n <  \overline{k} \leq \overline{p}$, and for simplicity we assume that $i \leq p$ (the $p \leq i$ case is similar). Let $\mathbf{m}$ be a monic monomial in the variables from $M_k$ such that $\mathbf{m}\cdot e_i = e_{\overline{p-1}}$, then for weight reasons $\mathbf{m}$ is in one of the following sets:
\begin{eqnarray}\label{eq:mon}
\{ f_{i, q-1}f_{p, \overline{q}} \, , \, f_{i,\overline{q}} f_{p,q-1} \mid q = k+1, \ldots, n\} \cup \{ f_{i, n} f_{p, n}\} \cup \{ f_{i, \overline{p}} \}.
\end{eqnarray}
We will see that  among these monomials, $f_{i, \overline{p}}$ is the unique monomial of minimal degree, namely of degree $p(2n - i - p +1)$.
\par
Let $i \leq q \leq n$ and denote:
$$
\begin{array}{cc}
\text{ for } p < q, & Y(q) :=  q(2n - i - q + 1)  + (2n - (q-1))(q-p); \\
\text{for } p \geq q, & Y(q) :=  q(2n - i - q + 1) + (2n-p)(p-q+2); \\
\text{for any } p, & X(q): = (2n-(q-1))((q-1) - i +1) + q(2n-p-q+1).
\end{array}
$$
We have:
$$
\begin{array}{ccc}
\text{if } p \leq q, & \text{then} & \operatorname{deg }_{\bd}(f_{i,q-1}f_{p, \overline{q}}) = X(q);\\
\text{if } p > q,& \text{then} & \operatorname{deg }_{\bd}  (f_{i,q-1}f_{q, \overline{p}}) = X(q);\\
\text{ if } p < q, & \text{then}  & \operatorname{deg}_{\bd} (f_{i,\overline{q}}f_{p, q-1} )= Y(q);\\
\text{ if } p \geq q, & \text{then} & \operatorname{deg}_{\bd}( f_{i,\overline{q}}f_{q-1, p}) = Y(q).\\
\end{array}
$$
Now it is straightforward to see that for $q > i$:  
\[
X(q) > X(q-1) \;\;  \text{ and moreover } \; \;  X(i) > p(2n - i - p +1),
\]
as well as
\[
Y(q) > Y(q+1) \text{  and  } Y(n) = X(n).
\]
Combining both gives
\[
Y(i) >  \ldots > Y(n) = X(n) >  \ldots > X(i) > p(2n - i - p +1) = \operatorname{deg}_{\bd} f_{i, \overline{p}}.
\]
Moreover 
\[
\operatorname{deg }_{\bd} (f_{i,n}f_{p,n}) = n(2n-i-p+2) > p(2n-i-p+1) = p(2n - i - p +1) = \operatorname{deg}_{\bd} f_{i, \overline{p}}.
\]
This implies, that  $f_{i, \overline{p}}$ is the unique monomial of minimal degree among all monomials in \eqref{eq:mon} and the first step is done.\\
\par

We are left with step $(2)$. Let $e_{i_1} \wedge \ldots \wedge e_{i_k} \in V(\varpi_k) \subset \bigwedge^k \mathbb{C}^{2n}$, with $i_1 < \ldots < i_k$ and $i_j \in \{ 1, \ldots, \overline{1}\}$. Let $\mathbf{m}$ be a monomial of minimal degree in the variables from $M_k$ such that 
\[
\mathbf{m}\cdot e_1 \wedge \ldots \wedge e_k = e_{i_1} \wedge \ldots \wedge e_{i_k} + \text{rest}.
\]
Due to the operation on the tensor product, there exists a factorization $\mathbf{m} = \prod_{\ell=1}^k \mathbf{m}_\ell$ and a permutation $\sigma \in \mathfrak{S}_k$,  such that $\mathbf{m}_\ell\cdot e_\ell = e_{i_{\sigma(\ell)}}$. 
Since $\mathbf{m}$ is in the variables from $M_k$ only, we see that if $ \ell \in \{i_1, \ldots, i_k\} \cap \{1, \ldots, k\}$, then $\mathbf{m}_\ell = 1$ and hence $i_\sigma(\ell) = \ell$. So without loss of generality, we may assume that $k < i_1 < i_2 < \ldots < i_k$.
\par
Suppose now there exist $\ell < j $ with $\sigma(\ell) < \sigma(j)$. We have
\[
\mathbf{m}_\ell \mathbf{m}_j\cdot e_\ell \wedge e_j = e_{i_{\sigma(\ell)}} \wedge e_{i_{\sigma(j)}} + \text{ rest }.
\]
From step $(1)$ we deduce that if $\mathbf{m}$ is of minimal degree, then
\[
\mathbf{m}_\ell = f_{\ell, i_{\sigma(\ell)}-1} \; , \; \mathbf{m}_j = f_{j, i_{\sigma(j)}-1}.
\]
Similarly to the $\tt{A}_n$ considerations (recall that the $\tt{C}_n$-degree is a obtain from an $\tt{A}_{2n-1}$-degree), we see that 
\[
\operatorname{deg}_{\bd}  (f_{\ell, i_{\sigma(j)}-1} f_{j, i_{\sigma(\ell) }-1}) < \operatorname{ deg }_{\bd}  (f_{\ell, i_{\sigma(\ell) }-1} \;  f_{j, i_{\sigma(j) }-1}).
\]
We denote  $\mathbf{m}' :=   f_{\ell, i_{\sigma(j)}-1} f_{j, i_{\sigma(\ell) }-1} \left( \prod_{i \neq j ,\ell} \mathbf{m}_i  \right) $, then 
\[
\operatorname{ deg }_{\bd} \mathbf{m} > \operatorname{ deg }_{\bd}  \mathbf{m}'.
\]
But by construction:
\[
\mathbf{m}'\cdot e_1 \wedge \ldots \wedge e_k =  e_{i_1} \wedge \ldots \wedge e_{i_k} + \text{rest},
\]
we have a contradiction to the minimality of the degree of $\mathbf{m}$ and hence $\sigma(\ell) > \sigma(j)$ for all $\ell < j $. 
\par
Let $\{i_1 \ldots, i_k\} = \{p_1 < \ldots < p_s\} \cup \{ \ell_1 < \ldots < \ell_{k-s}\}$, where $\ell_{k-s} \leq k < p_1$ and $\{q_1 < \ldots < q_s\}$ be the complement of $ \{ \ell_1 < \ldots < \ell_{k-s}\}$ in $\{1, \ldots, k\}$, then the monomial of minimal degree to obtain $e_{i_1} \wedge \ldots \wedge e_{i_k}$ is
\[
f_{q_1, p_s-1} \cdots f_{q_s, p_1 - 1}.
\]
This proves that $\mathbf{d} \in  \mathcal{S}_{\lm}$ and moreover these are precisely the monomials associated to $S_{\tt{C}_n}(\varpi_k)$.
\end{proof}

From here we can deduce by using \eqref{Eq:AC} and Theorem~\ref{Thm:Minkow}:
\begin{theorem}
\begin{enumerate}
\item For the degree function: $\bold{d}\in\ms_{\gm}$.
\item The set $\{f^\bold{a}\cdot v_\lambda^\bold{d}\mid \bold{a}\in S_{\tt C_n}(\lambda)\}$ forms a monomial basis of $V^\bold{d}(\lambda)$.
\item If  $\bd \in \ms_{\gm}$ and the corresponding monomial basis is associated to $S_{\tt C_n}(\lambda)$, then $\bold{d}\notin\md^q$.
\end{enumerate}
\end{theorem}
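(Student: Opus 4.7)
\textit{Proof proposal.}

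The plan is to dispatch (1) and (2) as a direct consequence of the preceding lemma combined with Theorem~\ref{Thm:Minkow}, and then attack (3) by isolating a rank-two contradiction between what the FFLV parametrization forces and what the L-S formula forces. For (1) and (2), the lemma has already shown $\bd \in \ms_{\lm}$ with $S^{\bd}(\varpi_k) = S_{\tt C_n}(\varpi_k)$ for each fundamental weight; combined with the Minkowski identity \eqref{Eq:AC} and $\# S_{\tt C_n}(\lambda) = \dim V(\lambda)$, this verifies the hypothesis of Theorem~\ref{Thm:Minkow}, which then produces both $\bd \in \ms_{\gm}$ and the description of the monomial basis of $V^{\bd}(\lambda)$ as indexed by $S_{\tt C_n}(\lambda)$.

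For (3), I will work inside the rank-two subsystem $\Delta_+^{\g'} = \{\alpha_{n-1,n-1},\,\alpha_{n,n},\,\alpha_{n-1,n},\,\alpha_{n-1,\overline{n-1}}\}$ of type $\tt C_2$ and focus on $V(\varpi_n)$. The weight $\varpi_n - 2\alpha_{n-1,n}$ is Weyl-conjugate to $\varpi_n$ via $s_{n-1} s_n s_{n-1}$, hence extremal of multiplicity one, and both $f_{\alpha_{n-1,n}}^2 v_{\varpi_n}$ and $f_{\alpha_{n,n}} f_{\alpha_{n-1,\overline{n-1}}} v_{\varpi_n}$ are non-zero by elementary $\lie{sl}_2$-string computations; they are therefore proportional. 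On the combinatorial side, the symplectic Dyck path $\alpha_{n-1,n-1} \to \alpha_{n-1,n} \to \alpha_{n,n}$ enforces $m_{\alpha_{n-1,n-1}} + m_{\alpha_{n-1,n}} + m_{\alpha_{n,n}} \leq 1$ on $S_{\tt C_n}(\varpi_n)$, which rules out the lattice point with $m_{\alpha_{n-1,n}} = 2$; a scan of the remaining Dyck-path constraints confirms that the lattice point with $m_{\alpha_{n,n}} = m_{\alpha_{n-1,\overline{n-1}}} = 1$ (all others zero) lies in $S_{\tt C_n}(\varpi_n)$, since no symplectic Dyck path can simultaneously contain $\alpha_{n,n}$ and $\alpha_{n-1,\overline{n-1}}$. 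Under the hypothesis of (3), this forces $f_{\alpha_{n-1,n}}^2 \in I^{\bd}(\varpi_n)$ and hence the strict inequality
\[
2\, d_{\alpha_{n-1,n}} \;>\; d_{\alpha_{n,n}} + d_{\alpha_{n-1,\overline{n-1}}}. \qquad (\ast)
\]

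To close the argument, I will show that $\bd \in \md^q_{\underline{w}_0}$ forces the opposite inequality for every $\underline{w}_0 \in R(w_0)$, contradicting $(\ast)$. Fix $\underline{w}_0$ and apply \eqref{Eq:LS} to the pair $\{\alpha_{n-1,\overline{n-1}},\alpha_{n,n}\}$. Every monomial on the right-hand side has total weight $2\alpha_{n-1,n}$, supported on the simple roots $\alpha_{n-1}, \alpha_n$ alone, so only positive roots of $\Delta_+^{\g'}$ can appear. Among these, $\alpha_{n-1,n-1}$ is not in the positive real cone $\mathbb{R}_{>0}\alpha_{n-1,\overline{n-1}} + \mathbb{R}_{>0}\alpha_{n,n}$, while $\alpha_{n-1,n}$ is (via $2\alpha_{n-1,n} = \alpha_{n-1,\overline{n-1}} + \alpha_{n,n}$); the convex-order property therefore singles out $\alpha_{n-1,n}$ as the unique intermediate root of $\Delta_+^{\g'}$, and the right-hand side collapses to a scalar multiple of $F_{\alpha_{n-1,n}}^2$. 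This scalar is non-zero because the commutation relation matches the corresponding L-S relation inside the sub-quantum-group $U_q(\g') \cong U_q(\lie{sp}_4)$, whose coefficient is $\pm(1-q^{-2})$ by Example~\ref{Ss:QDC:C2}. Hence $\bd \in \md^q_{\underline{w}_0}$ would force $d_{\alpha_{n,n}} + d_{\alpha_{n-1,\overline{n-1}}} > 2\, d_{\alpha_{n-1,n}}$, contradicting $(\ast)$. I expect the main subtlety to be the non-vanishing of the L-S coefficient uniformly in $\underline{w}_0$; the weight-plus-convex-order reduction is precisely the device that reduces this to the known rank-two calculation.
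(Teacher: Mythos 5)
Your proposal follows the paper's proof essentially verbatim: parts (1) and (2) are obtained exactly as in the paper from the preceding lemma together with \eqref{Eq:AC} and Theorem~\ref{Thm:Minkow}, and part (3) uses the same rank-two $\tt C_2$ obstruction, pitting the FFLV-forced inequality $2d_{\alpha_{n-1,n}} > d_{\alpha_{n,n}} + d_{\alpha_{n-1,\overline{n-1}}}$ (read off from $V^{\bd}(\varpi_n)$) against the reverse inequality forced by the L-S relation in the $\tt C_2$ subalgebra. The only slip is cosmetic: the Weyl element taking $\varpi_n$ to $\varpi_n - 2\alpha_{n-1,n}$ is $s_ns_{n-1}s_ns_{n-1}$ rather than $s_{n-1}s_ns_{n-1}$, which does not affect the extremality, hence one-dimensionality, of that weight space.
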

\begin{proof}
Part (1) and (2) are deduced from the lemma, by using \eqref{Eq:AC} and Theorem~\ref{Thm:Minkow}. It is left to prove the part $(3)$, i.e. we assume that $\bd \in \md$ satisfies (1) and (2) and we want to show $\bd \notin \md^q$. We consider the simple Lie subalgebra $\mathfrak{g}_2$ of type $\tt{C}_2$ in $\mathfrak{g}$ with positive roots $\alpha_{n-1,n-1}, \alpha_{n-1, \overline{n-1}}, \alpha_{n-1,n}$ and $\alpha_{n,n}$. In the subalgebra $U_q(\mathfrak{g}_2) \subset U_q(\mathfrak{g})$ we have the following relation, independent of the chosen reduced expression (see Example~\ref{Ss:QDC:C2}),
$$ F_{n-1, \overline{n-1}} F_{n,n} = F_{n,n}F_{n-1, \overline{n-1}} + (1 - q^{-2}) F_{n-1,n}^{(2)},$$
implying that every $\mathbf{d}' \in \mathcal{D}^q$ satisfies $\bd'_{n-1, \overline{n-1}} + \bd'_{n,n} > 2 \bd'_{n-1,n}$.
\par
Since $\mathbf{d}$ satisfies (1) and (2), in $V^{\mathbf{d}}(\varpi_{n})$ we have $f_{n-1, \overline{n}} f_{n,n}\cdot v^{\mathbf{d}}_{\varpi_{n}} \neq 0$ and $f^2_{n-1,n}\cdot v^{\mathbf{d}}_{\varpi_{n}} = 0$ which implies $\bd_{n-1, \overline{n-1}} + \bd_{n,n} < 2 \bd_{n-1,n}$. Hence $\mathbf{d} \notin \mathcal{D}^q$.
\end{proof}

\begin{remark}
If $\ms_{\gm}\neq\emptyset$, then there exists an $\mathbb{N}$-filtration arising from $\bold{d}\in\ms_{\gm}$ such that for any $\lambda\in\mathcal{P}_+$, $V^\bold{d}(\lambda)$ has a unique monomial basis. 
\par
If $\bd \in \Dwq \cap \ms_{\gm}$, then, by the argument in \cite[Theorem 5]{FFR15}, there exists an $\mathbb{N}$-filtration on $U_q(\lie n^-)$ arising from $\bd$ such that for any $\lambda\in\mathcal{P}_+$, $V^\bold{d}_q(\lambda)$ has a unique monomial basis in $S_q(\lie n^-)$.
\end{remark}

\subsection{Global monomial set: \texorpdfstring{$\tt C_2$}{C2}}
Consider the quantum degree cone $\Dwq$ defined in (\ref{Eq:C2}). We pick a solution such that the sum $a_1+a_2+a_3+a_4$ takes its minimal value: 
$$\bold{d}=(d_{1,1},d_{1,\overline{1}},d_{1,2},d_{2,2})=(1,1,1,2).$$
Since $\bold{d}\in\md$, we consider the induced degree on the enveloping algebra with PBW root vectors $f_{1,1}$, $f_{1,\overline{1}}$, $f_{1,2}$ and $f_{2,2}$.
 
\begin{proposition}\label{lem:monom}
We have $\bold{d}\in\ms_{\lm}$, \emph{i.e.}, the defining ideals $I^\bold{d}(\varpi_1)$ and $I^\bold{d}(\varpi_2)$ are monomial.
\end{proposition}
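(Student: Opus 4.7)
The plan is to invoke Corollary~\ref{Cor:useful}. In type $\tt{C}_2$ the natural representation $V(\varpi_1)$ has dimension $4$, while $V(\varpi_2)$ has dimension $5$, realized as the trace-free part of $\wedge^2 V(\varpi_1)$; in both representations every weight space is one-dimensional. Consequently, for each weight $\mu$ with $r_\mu \neq 0$ we have $r_\mu = 1$, and verifying the corollary reduces to the following task: for every such $\mu$, show that the minimum $\bd$-degree among the monomials $f^{\bs}$ with $\sum_\alpha s_\alpha \alpha = \varpi_i - \mu$ and $f^{\bs}\cdot v_{\varpi_i} \neq 0$ is attained by a unique monomial.

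I would first handle $V(\varpi_1)$ by working with the standard basis $\{e_1, e_2, e_{\overline{2}}, e_{\overline{1}}\}$ and the normalizations $f_{1,1}e_1 = e_2$, $f_{2,2}e_2 = e_{\overline{2}}$, $f_{1,1}e_{\overline{2}} = e_{\overline{1}}$. Only two weights require genuine attention: at $\varpi_1 - \alpha_{1,2}$ the two monomials $f_{1,2}$ and $f_{1,1}f_{2,2}$ have degrees $1$ and $3$, and the second vanishes on $v_{\varpi_1}$ since $f_{2,2}\cdot v_{\varpi_1} = 0$; at $\varpi_1 - \alpha_{1,\overline{1}}$ the three candidates $f_{1,\overline{1}}$, $f_{1,1}f_{1,2}$, $f_{1,1}^2 f_{2,2}$ of degrees $1, 2, 4$ reduce on $v_{\varpi_1}$ to the first two (again because $f_{2,2}\cdot v_{\varpi_1} = 0$), which have distinct degrees.

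Next I would treat $V(\varpi_2)$ with $v_{\varpi_2} = e_1 \wedge e_2$ inside $\wedge^2 V(\varpi_1)$, using the Leibniz rule. The nontrivial weights below $\varpi_2$ are $\varpi_2 - \alpha_{1,2}$, $\varpi_2 - \alpha_{1,\overline{1}}$, and $\varpi_2 - \alpha_{1,\overline{1}} - \alpha_2$. At $\varpi_2 - \alpha_{1,2}$ both $f_{1,2}$ and $f_{1,1}f_{2,2}$ are nonzero on $v_{\varpi_2}$, with degrees $1$ and $3$. At $\varpi_2 - \alpha_{1,\overline{1}}$ all three monomials $f_{1,\overline{1}}, f_{1,1}f_{1,2}, f_{1,1}^2 f_{2,2}$ turn out to be nonzero on $v_{\varpi_2}$ (the key point is that, unlike in $V(\varpi_1)$, $f_{2,2}\cdot v_{\varpi_2}\neq 0$), and their degrees $1,2,4$ are pairwise distinct. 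Finally at $\varpi_2 - \alpha_{1,\overline{1}} - \alpha_2$ the four candidates $f_{1,2}^2, f_{1,\overline{1}}f_{2,2}, f_{1,1}f_{1,2}f_{2,2}, f_{1,1}^2 f_{2,2}^2$ have degrees $2,3,4,6$; the last two vanish on $v_{\varpi_2}$, both reducing to $f_{2,2}e_{\overline{2}} = 0$, while the first two are nonzero with distinct degrees.

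The only genuine work is the bookkeeping in $V(\varpi_2)$: tracking, for a handful of monomials, the action on $e_1 \wedge e_2$ via the Leibniz rule. Once these vanishings and non-vanishings are recorded, in every relevant weight space the unique-minimum condition of Corollary~\ref{Cor:useful} is satisfied, and the corollary immediately yields that both $I^{\bd}(\varpi_1)$ and $I^{\bd}(\varpi_2)$ are monomial ideals, establishing $\bd \in \ms_{\lm}$.
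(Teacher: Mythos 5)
Your computation is correct and is precisely the ``straightforward computation with the help of Corollary~\ref{Cor:useful}(1)'' that the paper invokes while omitting the details. One harmless imprecision: whether a given monomial annihilates $v_{\varpi_2}$ can depend on the chosen ordering of its factors in $U(\lie n^-)$ (e.g.\ $f_{1,1}f_{1,2}f_{2,2}\cdot v_{\varpi_2}=0$ if $f_{2,2}$ acts first, but $f_{1,2}f_{1,1}f_{2,2}\cdot v_{\varpi_2}\neq 0$), yet this is immaterial here because in every weight space of $V(\varpi_1)$ and $V(\varpi_2)$ the candidate monomials already have pairwise distinct $\bd$-degrees ($\{1,3\}$, $\{1,2,4\}$, $\{2,3,4,6\}$), so Corollary~\ref{Cor:useful} applies without any vanishing bookkeeping.
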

The proof of the proposition is omitted, as it is a straightforward computation with the help of Corollary~\ref{Cor:useful} (1). 
\par
We turn to study whether $\bold{d}$ is in the global monomial set $\ms_{\gm}$.
\par
Let $\bold{SP}_{4}(m_1,m_2)\subset\mathbb{R}^4$ be the polytope defined by the following inequalities:
$$x_1,\ x_2,\ x_3,\ x_4\geq 0,\ \ x_1\leq m_1,\ \ x_4\leq m_2,$$
$$2x_1+x_2+2x_3+2x_4\leq 2(m_1+m_2),\ \ x_1+x_2+x_3+2x_4\leq m_1+2m_2.$$
Let $S(m_1,m_2)$ denote the lattice points in $\bold{SP}_4(m_1,m_2)$.

\begin{theorem}\label{Thm:C2}
For any $\lambda=m_1\varpi_1+m_2\varpi_2\in\mathcal{P}_+$, the following statements hold:
\begin{enumerate}
\item The set $\{f^{\bold{p}} v_\lambda^\bold{d}\mid\bold{p}\in S(m_1,m_2)\}$ forms a basis of $V^\bold{d}(\lambda)$, hence a basis of $V(\lambda)$.
\item We have $\bold{d}\in\ms_{\gm}$, \emph{i.e.}, the defining ideal $I^\bold{d}(\lambda)$ is monomial.
\end{enumerate}
\end{theorem}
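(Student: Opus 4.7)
The plan is to apply Theorem~\ref{Thm:Minkow} by identifying $\bold{SP}_4(m_1, m_2)$ as the Minkowski sum of the fundamental polytopes $\bold{SP}_4(1, 0)$ and $\bold{SP}_4(0, 1)$, and then verifying that the resulting lattice count matches $\dim V(\lambda)$.

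By Proposition~\ref{lem:monom}, $\bd \in \ms_{\lm}$, so the fundamental representations have monomial defining ideals. A direct enumeration of the weight spaces of $V(\varpi_1)$ and $V(\varpi_2)$, using $\bd = (1,1,1,2)$ to single out the smallest-degree monomial representative in each weight space, shows that $S^{\bd}(\varpi_1) = S(1, 0)$ and $S^{\bd}(\varpi_2) = S(0, 1)$. The only subtle weight spaces of $V(\varpi_2)$ are those of weights $0$ and $-\varpi_2$, where the surviving monomials are $f_{1,2}\cdot v^{\bd}_{\varpi_2}$ and $f_{1,2}^{2}\cdot v^{\bd}_{\varpi_2}$, corresponding precisely to the lattice points $(0,1,0,0)$ and $(0,2,0,0)$ of $\bold{SP}_4(0, 1)$.

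The main technical step is the Minkowski decomposition
\[
S(m_1, m_2) \;=\; S(1, 0)^{+m_1} + S(0, 1)^{+m_2}.
\]
The inclusion $\supseteq$ is immediate from additivity of the defining inequalities of $\bold{SP}_4$. For $\subseteq$, given $\bold{p}=(x_1, x_2, x_3, x_4) \in S(m_1, m_2)$, one draws $x_1$ copies of $(1,0,0,0) \in S(1,0)$ and $x_4$ copies of $(0,0,0,1) \in S(0,1)$, and then distributes the remaining coordinate values among the shared generators via the linear system $a + c + 2d = x_2$, $b + e = x_3$, with $a + b \leq m_1 - x_1$ and $c + d + e \leq m_2 - x_4$. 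The feasibility of this system in nonnegative integers is the main obstacle; I expect it to follow by first choosing $d = \max(0,\, x_1 + x_2 + x_3 + x_4 - m_1 - m_2)$, whereupon the two slanted inequalities defining $\bold{SP}_4(m_1, m_2)$ are exactly what guarantee $d \leq x_2/2$ and $d \leq m_2 - x_4$, and then solving for $c$ and $e$ within the remaining slack.

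Finally, parameterising by $A = m_1 - x_1$ and $B = m_2 - x_4$, a short split on $x_3 \leq A$ versus $A < x_3 \leq A+B$ shows that for fixed $(A,B)$ the number of admissible $(x_2, x_3)$ equals $\tfrac{1}{2}(A+1)(A+4B+2) + B^2$. Summing over $A \in [0, m_1]$ and $B \in [0, m_2]$ and simplifying recovers the Weyl dimension
\[
\dim V(\lambda) \;=\; \frac{(m_1+1)(m_2+1)(m_1+m_2+2)(m_1+2m_2+3)}{6}.
\]
With $|S^{\bd}(\varpi_1)^{+m_1} + S^{\bd}(\varpi_2)^{+m_2}| = |S(m_1, m_2)| = \dim V(\lambda)$, Theorem~\ref{Thm:Minkow} applies and delivers (1) and (2) simultaneously: the monomials indexed by $S(m_1,m_2)$ form a basis of $V^{\bd}(\lambda)$, and the defining ideal $I^{\bd}(\lambda)$ is monomial.
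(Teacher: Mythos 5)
Your proposal is correct and follows essentially the same route as the paper: identify the lattice points of the fundamental polytopes with $S^{\bd}(\varpi_1)$ and $S^{\bd}(\varpi_2)$, establish the Minkowski additivity $S(m_1,m_2)=S(1,0)^{+m_1}+S(0,1)^{+m_2}$, count lattice points fibrewise over $(x_1,x_4)$ to match the Weyl dimension, and invoke Theorem~\ref{Thm:Minkow}. The only cosmetic differences are that the paper proves additivity by peeling off one fundamental generator at a time (Proposition~\ref{Prop:MinkC}) where you distribute all coordinates at once, and your closed form $\tfrac{1}{2}(A+1)(A+4B+2)+B^2$ agrees with the paper's Lemma~\ref{Lem:Pcount} count for the two-dimensional slices.
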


The rest of this paragraph will be devoted to prove this theorem.

\begin{proposition}\label{Prop:MinkC}
For any $m_1,m_2,m_1',m_2'\in\mathbb{N}$, 
$$S(m_1,m_2)+S(m_1',m_2')=S(m_1+m_1',m_2+m_2').$$
\end{proposition}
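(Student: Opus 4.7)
The inclusion $S(m_1,m_2)+S(m_1',m_2')\subseteq S(m_1+m_1',m_2+m_2')$ is immediate: every defining inequality of $\bold{SP}_4$ is linear and homogeneous in the parameters $(m_1,m_2)$, so summing the inequalities satisfied by $\mathbf{q}\in S(m_1,m_2)$ and $\mathbf{q}'\in S(m_1',m_2')$ yields exactly the inequalities defining $\bold{SP}_4(m_1+m_1',m_2+m_2')$, while integrality is preserved under addition. The content of the proposition is therefore the reverse inclusion.

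My plan for the reverse inclusion is to give an explicit decomposition rule. Fix $\mathbf{p}=(p_1,p_2,p_3,p_4)\in S(m_1+m_1',m_2+m_2')$. For the ``extreme'' coordinates $p_1$ and $p_4$ there is an obvious range from which $q_1$ and $q_4$ must be drawn, namely
\[
q_1\in[\max(0,p_1-m_1'),\min(p_1,m_1)],\qquad q_4\in[\max(0,p_4-m_2'),\min(p_4,m_2)],
\]
both of which are non-empty since $0\le p_1\le m_1+m_1'$ and $0\le p_4\le m_2+m_2'$. The heart of the argument is to choose $q_1,q_4$ (at the boundary of these intervals if needed) and then to split $p_2=q_2+q_2'$, $p_3=q_3+q_3'$ so that the two diagonal inequalities
\[
2q_1+q_2+2q_3+2q_4\le 2(m_1+m_2),\qquad q_1+q_2+q_3+2q_4\le m_1+2m_2,
\]
and their primed analogues are simultaneously satisfied. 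I would first set $q_1:=\min(p_1,m_1)$ and $q_4:=\min(p_4,m_2)$, then treat $q_2+q_3$ and $q_2$ as two quantities to be allocated between the two summands, with allowed ranges dictated by the slacks in the four inequalities on each side. A short linear-algebra check shows the resulting system always has an integer solution provided the global inequalities on $\mathbf p$ hold: the feasibility of the allocation reduces to verifying that the sum of the ``lower bounds on $q_2+q_3$'' (arising from the inequalities on the primed side) does not exceed the sum of the ``upper bounds'' coming from the unprimed side, which is precisely the defining inequality of $\bold{SP}_4(m_1+m_1',m_2+m_2')$ evaluated at $\mathbf p$.

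The main obstacle I expect is exactly this combinatorial check: when one or both of $p_1>m_1$, $p_4>m_2$ force $q_1$ or $q_4$ to the boundary of its interval, the slacks in the two diagonal inequalities on the two sides are no longer independent, and one must verify that no ``hidden'' obstruction appears. I would handle this by splitting into the four cases $p_1\lessgtr m_1$, $p_4\lessgtr m_2$ (symmetric under swapping the two summands, so essentially two cases) and, in each case, writing the allowed range for $q_2$ and $q_3$ as an explicit interval whose non-emptiness is a direct consequence of the defining inequalities of $S(m_1+m_1',m_2+m_2')$. Integrality is automatic because all bounds involved are integers. An alternative, more conceptual route would be an induction on $m_1+m_1'+m_2+m_2'$ where one peels off a lattice point of $\bold{SP}_4(1,0)$ or $\bold{SP}_4(0,1)$ from $\mathbf p$ at each step; this is cleaner in principle but still requires essentially the same feasibility check to show that such a peelable point exists.
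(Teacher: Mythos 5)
Your easy inclusion is fine, and your overall strategy is viable in principle, but the proposal stops exactly where the proposition starts being nontrivial. The assertion that ``a short linear-algebra check shows the resulting system always has an integer solution'' is doing all the work and is not carried out. After fixing $q_1,q_4$, you must split $(p_2,p_3)$ subject to four nontrivial constraints on each summand: nonnegativity together with the two coupled inequalities $q_2+2q_3\le A$ and $q_2+q_3\le B$ (and their primed analogues with $q_i$ replaced by $p_i-q_i$). The feasible set for $(q_2,q_3)$ is the intersection of two polygons, and its non-emptiness is \emph{not} equivalent to the single condition ``sum of lower bounds $\le$ sum of upper bounds'' that you invoke: that criterion works for allocating one scalar over one interval per side, whereas here there are two coupled quantities and one must verify every pairing of an upper-bound facet on one side against a lower-bound facet on the other. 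Likewise, the greedy choice $q_1=\min(p_1,m_1)$, $q_4=\min(p_4,m_2)$ is plausible but unjustified: maximizing $q_4$ inside the first summand consumes its diagonal slack at rate $2$, and you have not shown this never obstructs the subsequent allocation. So, as written, there is a genuine gap at the heart of the argument.

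The paper takes the route you relegate to an ``alternative'': it proves $S(m_1,m_2)=S(m_1-1,m_2)+S(1,0)$ for $m_1>0$ and $S(0,m_2)=S(0,m_2-1)+S(0,1)$, and iterates; the easy inclusion then reassembles the unit pieces into the two desired summands. Contrary to your remark that this ``still requires essentially the same feasibility check'', it does not: the piece being peeled off is a single unit vector (or $0$), so there is no coupled allocation problem at all --- one simply runs a four-way case split on the first nonzero coordinate of $\mathbf{s}$ and checks that subtracting the corresponding unit vector lands in $S(m_1-1,m_2)$, which is immediate from the defining inequalities. I would recommend either completing your allocation argument with the full case analysis you only sketch, or switching to the peeling argument, which is substantially shorter.
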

\begin{proof}
It suffices to prove that for $m_1>0$ and $m_2\geq 0$,
$$S(m_1-1,m_2)+S(1,0)=S(m_1,m_2) \text{ and } S(0,m_2-1) + S(0,1) = S(0,m_2).$$
First suppose that $m_1 \neq 0$ and pick $\bold{s} = (a_1, a_2, a_3, a_4) \in S(m_1, m_2)$. 
\begin{enumerate}
\item If $a_1 \neq 0$, we set $\bold{t}_1 = (a_1 - 1, a_2, a_3, a_4)$ and $\bold{t}_2 = (1,0,0,0)$; then $\bold{t}_2 \in S(1,0)$. Since $\bold{s} \in S(m_1, m_2)$, $2a_1 + a_2 + 2a_3 + 2a_4 \leq 2(m_1+m_2)$ implies that $2(a_1-1) + a_2 + 2a_3 + 2a_4 \leq 2(m_1 -1 + m_2)$;
$a_1 + a_2 + a_3 + 2a_4 \leq m_1  +2m_2$ implies that $a_1 - 1 + a_2 + a_3 + 2a_4 \leq (m_1 - 1) + 2m_2$. Combining them together we get $\bold{t}_1 \in S(m_1-1, m_2)$.
\item If $a_1 = 0$ and $a_3 \neq 0$, the very similar argument with $\bold{t}_2 = (0,0,1,0)$ implies again $\bold{t}_1=\bold{s}-\bold{t}_2 \in S(m_1-1, m_2)$.
\item Suppose that $a_1 = 0$, $a_3 = 0$ but $a_2 \neq 0$. The inequalities for $\bold{s} \in S(m_1, m_2)$ are reduced to $a_2 + 2a_4 \leq m_1 + 2m_2$. We see that $\bold{s} = (0,a_2-1, 0, a_4) + (0,1,0,0)$ gives a decomposition in $S(m_1-1, m_2) + S(1,0)$. 
\item When $a_1 =a_2=a_3= 0$ but $a_4 \neq 0$, the decomposition is obvious.
\end{enumerate}
Suppose now $m_1 = 0$ and pick $\bold{s} = (a_1, a_2, a_3, a_4)  \in S(0,m_2)$. Then $a_1 = 0$ and the inequality $a_2 + a_3 + 2a_4 \leq 2m_2$ is redundant. 
\begin{enumerate}
\item Suppose $a_3 \neq 0$, then we decompose $\bold{s} = (0,a_2 -1, a_3, a_4) + (0,1,0,0)$: it is clear $(0,1,0,0)\in S(0,1)$; since $a_2 + 2a_3 + 2a_4 \leq 2m_2$, we get $a_2 + 2(a_3 - 1) + 2a_4 \leq 2(m_2-1)$, it implies $(0,a_2 -1, a_3, a_4)\in S(0,m_2-1)$.
\item The case $a_1 =a_3= 0$ but $a_4 \neq 0$ can be dealt in a similar way.
\item We are left with the case where $0 \neq a_2 \leq 2m_2$. If $a_2\leq 2$, there is nothing to be shown; if $a_2>2$, we decompose it as $(0,a_2-2, 0,0) + (0,2,0,0)$.
\end{enumerate}
Repeating this procedure shows that any element in $S(m_1, m_2)$ can be decomposed as the sum of elements in $S(m_1 - k, m_2 - \ell)$ and in $S(k,\ell)$.
\end{proof}

To apply Theorem \ref{Thm:Minkow} to terminate the proof of Theorem \ref{Thm:C2}, it suffices to count the number of lattice points in $\bold{SP}_4(m_1,m_2)$.
\par
For any integers $a,b\in\mathbb{N}$, we define a polytope $\bold{P}(a,b)\subset\mathbb{R}^2$ by the following inequalities:
$$x\geq 0, \ \ y\geq 0,\ \ x+2y\leq a,\ \ x+y\leq b.$$
Let $N(a,b)$ denote the number of lattice points in $\bold{P}(a,b)$.

\begin{lemma}\label{Lem:Pcount}
The number of lattice points $N(a,b)$ has the following expression:
\begin{enumerate}
\item $N(a,a)=\left\{\begin{matrix} l(l+1) & \text{if } a=2l-1;\\ (l+1)^2 & \text{if }a=2l.\end{matrix}\right.$
\item $N(a,b)=\left\{\begin{matrix}
N(a,a), & \text{if }b\geq a; \\
\frac{1}{2}(b+1)(b+2), & \text{if } a\geq 2b;\\
-l^2+2lb-\frac{1}{2}b^2+\frac{1}{2}b+l+1, & \text{if } 2b>a>b \text{ and }a=2l;\\
-l^2+2lb-\frac{1}{2}b^2+\frac{3}{2}b+1, & \text{if } 2b>a>b \text{ and }a=2l+1.
\end{matrix}\right.$
\end{enumerate}
\end{lemma}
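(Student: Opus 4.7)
The plan is a straightforward case-by-case computation, slicing the polytope $\mathbf{P}(a,b)$ by horizontal lines $y = \text{const}$ and counting admissible $x$-values.

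First I would record two redundancies. When $b \geq a$, the inequality $x+y \leq b$ is implied by $x+2y \leq a$ together with $y \geq 0$, since $x+y \leq x+2y \leq a \leq b$. Hence $\mathbf{P}(a,b) \cap \mathbb{Z}^2 = \mathbf{P}(a,a)\cap\mathbb{Z}^2$ and $N(a,b)=N(a,a)$. Symmetrically, when $a \geq 2b$, the inequality $x+2y \leq a$ follows from $x+y \leq b$ and $y \leq b$, since $x+2y = (x+y)+y \leq 2b \leq a$. In that regime $\mathbf{P}(a,b)$ degenerates to the triangle $\{x,y \geq 0,\ x+y \leq b\}$, which contains $\binom{b+2}{2} = \tfrac{1}{2}(b+1)(b+2)$ lattice points, settling two of the four subcases of part (2).

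For part (1), I would slice $\mathbf{P}(a,a)$ on $y$: for each $0 \leq y \leq \lfloor a/2 \rfloor$ the admissible $x$-values are $0,1,\ldots,a-2y$, contributing $a-2y+1$ lattice points. Summing and separating parities gives
\begin{equation*}
\sum_{y=0}^{l}(2l-2y+1) = (l+1)^2 \text{ when } a = 2l, \qquad \sum_{y=0}^{l-1}(2l-2y) = l(l+1) \text{ when } a = 2l-1,
\end{equation*}
which matches the formulas claimed.

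The main computation is the intermediate range $b < a < 2b$, where both defining inequalities bite. For a fixed $y$ with $0 \leq y \leq \lfloor a/2\rfloor$ the upper bound on $x$ is $\min(a-2y,\,b-y)$, and $a-2y \leq b-y$ precisely when $y \geq a-b$. Under the hypothesis $b<a<2b$ the split index $a-b$ satisfies $0 < a-b < \lfloor a/2\rfloor$, so both pieces of the sum are non-empty and we obtain
\begin{equation*}
N(a,b) \;=\; \sum_{y=0}^{a-b-1}(b-y+1) \;+\; \sum_{y=a-b}^{\lfloor a/2\rfloor}(a-2y+1).
\end{equation*}
Each piece is an arithmetic progression. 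Substituting $a = 2l$ (respectively $a=2l+1$) and collecting terms yields the two quadratic expressions in $l$ and $b$ stated in the lemma. I would expect the only obstacle to be clerical: verifying that the split index lies in the claimed range under the hypothesis $b<a<2b$, and combining the two arithmetic progressions correctly in each parity case. No input beyond elementary arithmetic is required.
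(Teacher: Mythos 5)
Your proposal is correct and follows exactly the route the paper gestures at: the paper's proof is a one-sentence remark that one counts lattice points in the region cut out by the two lines and the axes, depending on where the lines intersect, and your slicing by $y$ with the split index $a-b$ is precisely that computation carried out (I checked that both quadratic formulas come out right). The only clerical slip is that the split index satisfies $0 < a-b \leq \lfloor a/2\rfloor$ (with equality possible when $a$ is odd), not a strict inequality, but this does not affect the argument.
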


\begin{proof}
It amounts to count the integral points in the closed region cutting by the lines $x+2y=a$, $x+y=b$ and the two axes in $\mathbb{R}^2$ which depends on the position of the intersection of these two lines.
\end{proof}

\begin{proposition}
The number of lattice points in $\bold{SP}_4(m_1,m_2)$ is
$$\frac{1}{6}(m_1+1)(m_2+1)(m_1+m_2+2)(m_1+2m_2+3).$$
\end{proposition}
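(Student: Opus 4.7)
The plan is to slice the polytope $\mathbf{SP}_4(m_1,m_2)$ along the coordinate directions $x_1$ and $x_4$, reducing to the two-dimensional count already handled by Lemma~\ref{Lem:Pcount}. Concretely, fix integers $x_1 = s$ with $0 \leq s \leq m_1$ and $x_4 = t$ with $0 \leq t \leq m_2$. The remaining two inequalities become
\[
x_2 + 2x_3 \leq 2(m_1-s) + 2(m_2-t), \qquad x_2 + x_3 \leq (m_1-s) + 2(m_2-t),
\]
which is exactly the polytope $\mathbf{P}(a,b)$ of Lemma~\ref{Lem:Pcount} with $a = 2u+2v$ and $b = u+2v$, where I set $u = m_1 - s$ and $v = m_2 - t$.

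Next I would apply Lemma~\ref{Lem:Pcount}. Since $a$ is always even and $2b - a = 2v \geq 0$, only the cases $b \geq a$ (when $u=0$), $a \geq 2b$ (when $v=0$), and $2b > a > b$ with $a$ even (when $u,v > 0$) can occur. The key observation is that the Case~3 polynomial $-l^2 + 2lb - \tfrac{1}{2}b^2 + \tfrac{1}{2}b + l + 1$, after substituting $l = u+v$ and $b = u+2v$, simplifies to
\[
N(u,v) \;=\; \tfrac{1}{2}u^2 + 2uv + v^2 + \tfrac{3}{2}u + 2v + 1,
\]
and one checks that this uniform expression agrees with the Case~1 value $(v+1)^2$ at $u=0$ and with the Case~2 value $\tfrac{1}{2}(u+1)(u+2)$ at $v=0$. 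Hence the total lattice point count is
\[
\#(\mathbf{SP}_4(m_1,m_2) \cap \mathbb{Z}^4) \;=\; \sum_{u=0}^{m_1} \sum_{v=0}^{m_2} \left( \tfrac{1}{2}u^2 + 2uv + v^2 + \tfrac{3}{2}u + 2v + 1 \right).
\]

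Finally I would carry out the double sum using the standard formulas $\sum_{k=0}^{N} k = \tfrac{N(N+1)}{2}$ and $\sum_{k=0}^N k^2 = \tfrac{N(N+1)(2N+1)}{6}$, factor out $(m_1+1)(m_2+1)$, and reduce the remaining bracketed polynomial. After multiplication by $12$ the bracket becomes $2m_1^2 + 6m_1 m_2 + 4m_2^2 + 10 m_1 + 14 m_2 + 12$, and the quadratic part factors as $(m_1+m_2)(m_1+2m_2)$; solving for the linear shift gives the factorization $(m_1+m_2+2)(m_1+2m_2+3)$, which yields the claimed formula
\[
\tfrac{1}{6}(m_1+1)(m_2+1)(m_1+m_2+2)(m_1+2m_2+3).
\]

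The only subtle point is verifying that the Case~3 polynomial of Lemma~\ref{Lem:Pcount} extends continuously to the boundary strata $u=0$ and $v=0$; this spares us from splitting the double sum into three separate pieces. After that, the calculation is a routine but slightly long factorization, where the correct guess $(m_1+m_2+\alpha)(m_1+2m_2+\beta)$ with $\alpha+\beta=5$ and $\alpha+2\beta=7$ pins down $\alpha=3$, $\beta=2$ uniquely.
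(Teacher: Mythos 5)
Your proof is correct and follows essentially the same route as the paper: slice along $x_1$ and $x_4$, identify each slice with $\mathbf{P}(2u+2v,\,u+2v)$, apply Lemma~\ref{Lem:Pcount}, and evaluate the resulting double sum (your summand $\tfrac{1}{2}u^2+2uv+v^2+\tfrac{3}{2}u+2v+1$ is exactly the paper's polynomial after the change of variables $u=m_1-\alpha$, $v=m_2-\beta$). If anything you are slightly more careful than the paper, which silently invokes the third case of the lemma even on the boundary strata $u=0$ and $v=0$ where the strict inequalities $2b>a>b$ fail; your explicit check that the Case~3 polynomial specializes to $(v+1)^2$ and $\tfrac{1}{2}(u+1)(u+2)$ there closes that small gap.
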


\begin{proof}
Let $H$ be the intersection of hyperplanes $x_1=\alpha$ and $x_4=\beta$ in $\mathbb{R}^4$ with coordinates $(x_1,x_2,x_3,x_4)$ where $\alpha,\beta\geq 0$. By definition, 
$$H\cap \bold{SP}_4(m_1,m_2)=\bold{P}(2m_1+2m_2-2\alpha-2\beta,m_1+2m_2-\alpha-2\beta).$$
Therefore by Lemma \ref{Lem:Pcount}, the number of integral points in $\bold{SP}_4(m_1,m_2)$ equals
\begin{equation}\label{Eq:N(a,b)}
\sum_{\alpha=0}^{m_1}\sum_{\beta=0}^{m_2}N(2m_1+2m_2-2\alpha-2\beta,m_1+2m_2-\alpha-2\beta).
\end{equation}
Since $\alpha\leq m_1$ and $\beta\leq m_2$, it falls into the third case in Lemma \ref{Lem:Pcount} (2) and (\ref{Eq:N(a,b)}) reads (where $l=m_1+m_2-\alpha-\beta$ and $b=m_1+2m_2-\alpha-2\beta$):
\begin{small}
$$
\sum_{\alpha=0}^{m_1}\sum_{\beta=0}^{m_2}\frac{1}{2}\alpha^2+2\alpha\beta+\beta^2-(m_1+2m_2+\frac{3}{2})\alpha-2(m_1+m_2+1)\beta+(\frac{1}{2}m_1^2+2m_1m_2+m_2^2+\frac{3}{2}m_1+2m_2+1).
$$
\end{small}
An easy summation provides the number in the statement.
\end{proof}

By Weyl character formula, for $\lambda=m_1\varpi_1+m_2\varpi_2\in\mathcal{P}_+$, $\dim V(\lambda)$ coincides with the number of lattice points in $\bold{SP}_4(m_1,m_2)$. This terminates the proof of Theorem \ref{Thm:C2}.

\begin{remark}
Up to permuting the second and the third coordinates, the polytope $\bold{SP}_4(m_1,m_2)$ coincides with the one in Proposition 4.1 of \cite{Kir1} (see also \cite{Kir2}), which is unimodularly equivalent to the Newton-Okounkov body of some valuation arising from inclusions of (translated) Schubert varieties.
\end{remark}

There are several other known polytopes parameterizing bases of a finite dimensional irreducible representation $V(\lambda)$ of $\lie{sp}_4$. For example, the Gelfand-Tstelin polytope $P_1(\lambda)$ \cite{BZ01}; the FFLV polytope $P_2(\lambda)$ \cite{FFL2}; the string polytope $P_3(\lambda)$ associated to the reduced decomposition $\underline{w}_0=s_1s_2s_1s_2$ \cite{Lit98}; the string polytope $P_4(\lambda)$ associated to the reduced decomposition $\underline{w}_0=s_2s_1s_2s_1$ [\emph{loc.cit.}]; when $\lambda=m_1\varpi_1+m_2\varpi_2$, the polytope $\bold{SP}_4(m_1,m_2)$.
\par
With the help of Polymake \cite{GJ97}, one can verify that: the polytopes $P_1(\lambda)$, $P_2(\lambda)$ and $P_4(\lambda)$ are unimodular equivalent; but the polytope $P_3(\lambda)$ and $\bold{SP}_4(m_1,m_2)$ are not unimodular equivalent to any other polytopes.

\begin{remark}
Using the polyhedral cones associated to these polytopes, the construction in \cite{FaFoL15} can be applied to produce three non-isomorphic toric degenerations of the spherical varieties associated to the symplectic group $\text{Sp}_4$, see for example \cite[Section 10, Section 12, Section 13]{FaFoL15}.
\end{remark}

\subsection{Global monomial set: $\tt D_4$} 
We prove that the global monomial set for $\tt D_4$ is non-empty. We refer to Section \ref{Sec:D4} for details on the cones and the enumeration of positive roots. Let $\bold{d}=(5,5,1,2,4,1,1,2,6,10,12,20)$. It is shown in Section \ref{Sec:D4} that there exists a $\underline{w}_0\in R(w_0)$ such that $\bold{d}\in\mathcal{D}_{\underline{w}_0}^q$. We will freely use the notations in Section \ref{Sec:D4}.

\begin{proposition} 
We have $\bd \in \mathcal{S}_{\lm}$. 
\end{proposition}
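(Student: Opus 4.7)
The plan is to verify the monomiality of each defining ideal $I^{\bd}(\varpi_i)$, $i=1,2,3,4$, by applying Corollary~\ref{Cor:useful} weight space by weight space. Since $\tt D_4$ has three minuscule fundamental representations $V(\varpi_1), V(\varpi_3), V(\varpi_4)$ (the vector and the two half-spin representations, each of dimension $8$) and one non-minuscule representation $V(\varpi_2)$, the adjoint representation of dimension $28$, the work naturally splits in two.

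For the minuscule cases, every weight space $V(\varpi_i)_\mu$ with $r_\mu \neq 0$ is one-dimensional, so only condition (1) of Corollary~\ref{Cor:useful} must be checked. Concretely, I would list for each weight $\mu\in\varpi_i-\mathcal{Q}_+$ occurring in $V(\varpi_i)$ all tuples $\bs\in\N^{\Delta_+}$ with $f^{\bs}\cdot v_{\varpi_i}\neq 0$ and $\sum s_\alpha\alpha=\varpi_i-\mu$, and verify that among these tuples the minimum $\deg_{\bd}(f^{\bs})$ is uniquely attained. Since the positive roots with nonzero action on $v_{\varpi_i}$ are exactly those in the support of the nilpotent radical of the maximal parabolic corresponding to $\varpi_i$, and since the concrete degrees in $\bd=(5,5,1,2,4,1,1,2,6,10,12,20)$ separate all relevant sums (this is the kind of purpose the jumpy weights $6,10,12,20$ serve), the inequalities are reduced to a finite checklist that can be verified by inspection, exactly as done for the rank $2$ and $\tt A_n$ cases.

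For the adjoint representation $V(\varpi_2)$, the weights of nonzero multiplicity are $0$ and the roots $\pm\alpha$ for $\alpha\in\Delta_+$. All root weight spaces are one-dimensional so condition (1) applies as in the minuscule case. The zero weight space has dimension $4$, so for each tuple $\bs$ with $\sum s_\alpha\alpha=\varpi_2$ and $f^{\bs}\cdot v_{\varpi_2}\neq 0$ in $V(\varpi_2)_0$, I must verify condition (2), namely that all the distinct weights $\deg_{\bd}(f^{\bs_k})$ taking values among these tuples are pairwise distinct. This reduces to a finite combinatorial check using the chosen degree vector and the explicit action of $\mathfrak{n}^-$ on $v_{\varpi_2}$.

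The main obstacle is the zero weight space of $V(\varpi_2)$: there are potentially many $\bs$ with $\sum s_\alpha\alpha=\varpi_2$ (every way of writing $\varpi_2$ as a sum of positive roots that acts nontrivially on $v_{\varpi_2}$), so one must both enumerate them and confirm that the specific values in $\bd=(5,5,1,2,4,1,1,2,6,10,12,20)$ genuinely separate the degrees of the $4$ basis tuples from the degrees of the tuples whose images are dependent. After these two checks are completed, $\bd \in \mathcal{S}_{\lm}$ follows from Corollary~\ref{Cor:useful}.
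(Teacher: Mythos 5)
Your plan is exactly the paper's: the authors omit the proof, stating only that it is a straightforward finite check via Corollary~\ref{Cor:useful}, applied to each $V^{\bd}(\varpi_i)$, and your decomposition into the three minuscule $8$-dimensional representations (condition (1) only) plus the $28$-dimensional adjoint representation with its $4$-dimensional zero weight space (condition (2)) is the correct way to organize that check. Like the paper, you stop short of actually carrying out the finite enumeration for the given $\bd=(5,5,1,2,4,1,1,2,6,10,12,20)$, but the approach and the reduction are the same.
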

Again, we omit the proof as it is straightforward with  Corollary~\ref{Cor:useful}.

\begin{theorem}
\begin{enumerate}
\item We have $\bold{d}\in\ms_{\gm}$. 
\item The set $\{f^\bold{s}\cdot v_\lambda^\bold{d}\mid \bold{s}\in S_{\tt D_4}(\lambda)\}$ forms a monomial basis of $V^\bold{d}(\lambda)$.
\item Let $\underline{w}_0 = s_2s_1s_2s_3s_2s_4s_2s_1s_2s_3s_2s_4 \in R(w_0)$, then $\bd \in \md_{\underline{w}_0}^q$.
\end{enumerate}
\end{theorem}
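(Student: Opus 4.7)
The plan is to handle part (3) first by direct verification, then deduce parts (1) and (2) from Theorem \ref{Thm:Minkow} using the FFLV polytopes.

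For part (3), I would enumerate the L-S commutation relations arising from the reduced expression $\underline{w}_0 = s_2s_1s_2s_3s_2s_4s_2s_1s_2s_3s_2s_4$ and check case by case that each inequality $d_{\beta_i} + d_{\beta_j} > \sum_k n_k d_{\beta_k}$ required by a non-vanishing structure constant $c(n_{i+1},\dots,n_{j-1})$ is satisfied by $\bold{d} = (5,5,1,2,4,1,1,2,6,10,12,20)$. This is a finite mechanical check: there are sixty-six pairs $(\beta_i,\beta_j)$, and only those with a non-trivial right-hand side in the L-S formula produce an honest inequality. The relevant data are compiled in Section \ref{Sec:D4}, so this step reduces to verifying a tabulated list of linear inequalities.

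For parts (1) and (2), by Theorem \ref{Thm:Minkow} it suffices to establish two facts: first, that for each fundamental weight $\varpi_i$ the set $S_{\tt D_4}(\varpi_i)$ coincides with $\{\bs \in \N^{\Delta_+} \mid f^\bs\cdot v^{\bd}_{\varpi_i} \neq 0 \text{ in } V^\bd(\varpi_i)\}$; and second, that the Minkowski sum identity $S_{\tt D_4}(\varpi_1)^{+m_1} + \cdots + S_{\tt D_4}(\varpi_4)^{+m_4} = S_{\tt D_4}(\lambda)$ together with $|S_{\tt D_4}(\lambda)| = \dim V(\lambda)$ holds. The second input comes directly from the FFLV construction \cite{FFL2}. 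For the first, the preceding proposition already asserts $\bold{d} \in \ms_{\lm}$, so each $V^\bd(\varpi_i)$ has a unique monomial basis; the task is to identify this basis with the FFLV monomials. I would proceed weight by weight, in the spirit of the $\tt C_n$ lemma above: for each weight $\mu$ of $V(\varpi_i)$ list all weight-$\mu$ monomials $f^{\bs_1},\dots,f^{\bs_{m_\mu}}$ in increasing $\bd$-degree and verify that the $r_\mu := \dim V(\varpi_i)_\mu$ monomials of smallest degree are exactly those indexed by $S_{\tt D_4}(\varpi_i)$, invoking Lemma \ref{Lem:useful} and Corollary \ref{Cor:useful} to conclude.

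The main obstacle lies in the weight-space analysis for $\varpi_2$. The three minuscule fundamentals $\varpi_1, \varpi_3, \varpi_4$ are $8$-dimensional and behave as in the type $\tt A_n$ case (each weight space is one-dimensional, so the identification of the surviving monomial is forced by weight considerations alone), but the adjoint fundamental $V(\varpi_2)$ is $28$-dimensional with several weight spaces of multiplicity greater than one, and many distinct twelve-letter PBW monomials can realize the same weight. The weights of $\bold{d}$ on the four simple roots, together with the sharp gap between the last four coordinates $(6,10,12,20)$ (the heights concentrated on the non-simple roots with large support) are chosen precisely so that, among the competing monomials for each weight of $V(\varpi_2)$, the FFLV monomials are always the strict $\bd$-minimal representatives; carrying out these degree comparisons is the only non-routine calculation. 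Once this is done, Theorem \ref{Thm:Minkow} combined with the FFLV Minkowski property delivers both the basis statement (2) and the global monomiality claim (1).
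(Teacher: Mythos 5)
Your proposal follows essentially the same route as the paper: part (3) is a finite check of the inequalities tabulated in Section~\ref{Sec:D4}, the local monomiality is verified weight by weight on the fundamentals (with $V(\varpi_2)$ being the only nontrivial case), and parts (1) and (2) then follow from Theorem~\ref{Thm:Minkow} via the Minkowski-sum property of the polytopes. One correction: the polytope $P_{\tt D_4}(\lambda)$ and its Minkowski decomposition come from Gornitskii's work \cite{Gor2}, not from the FFLV construction of \cite{FFL2} (which covers types $\tt A_n$ and $\tt C_n$ only); in type $\tt D$ the naive Dyck-path polytope does not have the required properties, so this input must be sourced correctly.
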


\begin{proof}
Let $P_{\tt D_4}(\lambda)$ be the polytope defined in \cite[Section 3]{Gor2} and $S_{\tt D_4}(\lambda)$ be the set of lattice points in $P_{\tt D_4}(\lambda)$. By a straightforward comparation, we obtain $$S_{\tt D_4}({\varpi_i}) = \{ \bs \in \N^{\Delta_+} \mid f^{\bs}\cdot v_{\varpi_i}^{\bd} \neq 0 \mbox{ in } V^\bd(\varpi_i)\},\ \ i = 1,2,3,4.$$
It is shown in [\emph{loc.cit.}] that for any $\lambda,\mu \in \mathcal{P}_+$, we have:
$$P_{\tt D_4}(\lambda) + P_{\tt D_4}(\mu) = P_{\tt D_4}(\lambda+\mu)\ \mbox{ and }\ S_{\tt D_4}(\lambda) + S_{\tt D_4}(\mu) = S_{\tt D_4}(\lambda+\mu)$$ and $\dim V(\lambda) = \#S_{\tt D_4}(\lambda)$.
\par
The statements (1) and (2) follow from Theorem \ref{Thm:Minkow}. The part (3) is shown in Section \ref{Sec:D4}.
\end{proof}

\subsection{Global monomial set: $\tt B_3$} 

Let $\lie g$ be of type $\tt B_3$. For $\lambda\in\mathcal{P}_+$, let $P_{\tt B_3}(\lambda)$ denote the polytope defined in \cite[Section 5]{BK} and $S_{\tt B_3}(\lambda)$ be the set of lattice points in $P_{\tt B_3}(\lambda)$.
\par
Let $f_{i,j}$ and $f_{i,\overline{j}}$ be the PBW root vectors associated to the positive roots $\alpha_{i,j}$ and $\alpha_{i,\overline{j}}$ respectively. For $\bold{d}\in\mathbb{R}_{\geq 0}^{\Delta_+}$, we write $d_{i,j}=\bold{d}(\alpha_{i,j})$ and $d_{i,\overline{j}}=\bold{d}(\alpha_{i,\overline{j}})$. We consider the element $\bold{d}\in\mathbb{R}_{\geq 0}^{\Delta_+}$ defined by:
$$ d_{1,1} = 4, \ d_{1,2} = 3,\ d_{2,2} = 3,\ d_{1,3} = 3,\ d_{1,\overline{2}} = 1,\ d_{1,\overline{3}} = 1,\ d_{2,3} = 4,\ d_{2,\overline{3}} = 3,\ d_{3,3} = 2.$$ 
We will show in Section \ref{Sec:B3} that $\bold{d}\in\mathcal{D}$.

\begin{theorem}
\begin{enumerate}
\item We have $\bold{d}\in\ms_{\gm}$, and the set $\{f^\bold{s}\cdot v_\lambda^\bold{d}\mid \bold{s}\in S_{\tt B_3}(\lambda)\}$ forms a monomial basis of $V^\bold{d}(\lambda)$.
\item For any $\bold{e}\in \md$ satisfying (1), we have $\bold{e} \notin \mathcal{D}^q$.
\end{enumerate}
\end{theorem}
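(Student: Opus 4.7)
The overall strategy is to follow the templates already established for $\tt D_4$ and $\tt C_n$ in the preceding subsections: part (1) will be deduced from $\bold{d}\in\mathcal{S}_{\lm}$ together with the Minkowski property of the Berenstein--Kirillov polytopes $P_{\tt B_3}(\lambda)$ via the master Theorem~\ref{Thm:Minkow}, while part (2) will be obtained from a rank-two quantum obstruction analogous to the one used in the $\tt C_n$ case.

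For (1), I would first verify $\bold{d}\in\mathcal{D}$ by checking the inequalities $d_\alpha+d_\beta>d_\gamma$ for the finitely many root decompositions $\alpha+\beta=\gamma$ among the nine positive roots of $\tt B_3$. Next I would prove $\bold{d}\in\mathcal{S}_{\lm}$ using Corollary~\ref{Cor:useful}: for each fundamental module $V(\varpi_i)$, $i=1,2,3$, and each weight $\mu$, I would enumerate the PBW monomials $f^{\bold{s}}$ with $f^{\bold{s}}\cdot v_{\varpi_i}\neq 0$ of weight $\varpi_i-\mu$, order them by $\bold{d}$-degree, and show that the leading monomial in each weight space is uniquely determined. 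Simultaneously I would identify
$$\{\bold{s}\in\mathbb{N}^{\Delta_+}\mid f^{\bold{s}}\cdot v_{\varpi_i}^{\bold{d}}\neq 0\}=S_{\tt B_3}(\varpi_i),$$
matching the monomials obtained with the lattice points of $P_{\tt B_3}(\varpi_i)$. Since those polytopes are known from \cite{BK} to satisfy $S_{\tt B_3}(\lambda)+S_{\tt B_3}(\mu)=S_{\tt B_3}(\lambda+\mu)$ and $\#S_{\tt B_3}(\lambda)=\dim V(\lambda)$, Theorem~\ref{Thm:Minkow} then delivers both $\bold{d}\in\mathcal{S}_{\gm}$ and the claimed monomial basis of $V^{\bold{d}}(\lambda)$.

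For (2), I would exploit the rank-two subalgebra of $\g$ generated by $\{E_2,F_2,E_3,F_3\}$, which is of type $\tt B_2\cong\tt C_2$. Inside $U_q(\g)$ the corresponding quantum subalgebra carries, independently of the chosen reduced expression, the Levendorskii--Soibelman relation
$$F_{2,\overline{3}}F_{2,2}-F_{2,2}F_{2,\overline{3}}=(1-q^{-2})F_{2,3}^{(2)}$$
already recorded in Example~\ref{Ss:QDC:C2}; it forces every $\bold{e}\in\mathcal{D}^q$ to satisfy $e_{2,\overline{3}}+e_{2,2}>2e_{2,3}$. On the classical side, I would use the polytope description of \cite{BK} to exhibit a dominant weight $\lambda_0$ and a weight $\mu_0$ such that, among the PBW monomials of weight $\lambda_0-\mu_0$, the one corresponding to $f_{2,\overline{3}}f_{2,2}$ (times a common factor) belongs to $S_{\tt B_3}(\lambda_0)$ while the one corresponding to $f_{2,3}^2$ does not. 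Any $\bold{e}\in\mathcal{D}$ realising the $S_{\tt B_3}(\lambda)$-monomial basis then has a monomial defining ideal $I^{\bold{e}}(\lambda_0)$ that kills $f_{2,3}^2 \cdot f^{\bold{s}_0}\cdot v_{\lambda_0}^{\bold{e}}$ but not $f_{2,\overline{3}}f_{2,2}\cdot f^{\bold{s}_0}\cdot v_{\lambda_0}^{\bold{e}}$, which imposes the strict inequality $2e_{2,3}>e_{2,\overline{3}}+e_{2,2}$ and contradicts the quantum one. Note that $\bold{d}$ itself illustrates this: $d_{2,\overline{3}}+d_{2,2}=6<8=2d_{2,3}$.

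The main obstacle is the local step in (1): because $V(\varpi_1)$ and especially $V(\varpi_2)$ are non-minuscule and carry weight multiplicities larger than one, separating the PBW monomials of each weight by $\bold{d}$-degree and matching the resulting indexing set with the polyhedral description $S_{\tt B_3}(\varpi_i)$ of \cite{BK} requires a careful case-by-case analysis rather than a direct minuscule argument. Once that local verification is settled, the passage to arbitrary $\lambda$ via Theorem~\ref{Thm:Minkow} is automatic, and the quantum obstruction in (2) reduces to the explicit rank-two inequality already used in Example~\ref{Ss:QDC:C2}.
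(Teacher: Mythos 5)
Your part (1) is exactly the paper's argument: verify $\bold{d}\in\mathcal{S}_{\lm}$ weight space by weight space via Corollary~\ref{Cor:useful}, match the resulting monomials with the lattice points of $P_{\tt B_3}(\varpi_i)$ from \cite{BK}, and then invoke the Minkowski property $S_{\tt B_3}(\lambda)+S_{\tt B_3}(\mu)=S_{\tt B_3}(\lambda+\mu)$ together with Theorem~\ref{Thm:Minkow}. For part (2) your strategy is the same in outline (a classical degree inequality forced by the monomial basis contradicts a quantum inequality forced by every $\Dwq$), but you use a genuinely different witness. The paper works with the triple $\alpha_{1,2},\alpha_{1,3},\alpha_{1,\overline{3}}$: it reads off from $P_{\tt B_3}(\varpi_2)$ that $f_{1,2}f_{1,\overline{3}}\cdot v^{\bold{e}}_{\varpi_2}\neq 0$ while $f_{1,3}^2\cdot v^{\bold{e}}_{\varpi_2}=0$ in the one-dimensional weight space, and then must run a three-case analysis on the possible positions of $\alpha_{1,3}$ in the convex order, using convexity (via $\alpha_{3,3}$) to eliminate the orders in which $\alpha_{1,3}$ does not sit between $\alpha_{1,2}$ and $\alpha_{1,\overline{3}}$. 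Your triple $\alpha_{2,2},\alpha_{2,3},\alpha_{2,\overline{3}}$ lies entirely in the rank-two subalgebra generated by $E_2,F_2,E_3,F_3$, so convexity automatically places $\alpha_{2,3}$ between the other two, the only weight-admissible monomial on the right-hand side of the L-S formula is $F_{2,3}^{(2)}$, and the quantum inequality $e_{2,2}+e_{2,\overline{3}}>2e_{2,3}$ follows uniformly --- this is precisely the reduction the paper itself uses for part (3) of its $\tt C_n$ theorem, so your route is cleaner and avoids the case analysis. The one step you leave dangling is the classical side: you should not search for an auxiliary $\lambda_0$, since $\varpi_2$ itself works --- the weight $\varpi_2-2\alpha_{2,3}$ is the root $\alpha_1$ of the adjoint representation $V(\varpi_2)$, hence one-dimensional, both $f_{2,2}f_{2,\overline{3}}$ and $f_{2,3}^2$ act nontrivially on $v_{\varpi_2}$, and since $d_{2,2}+d_{2,\overline{3}}=6<8=2d_{2,3}$ part (1) already certifies that the exponent of $f_{2,2}f_{2,\overline{3}}$ lies in $S_{\tt B_3}(\varpi_2)$ while that of $f_{2,3}^2$ does not; any $\bold{e}$ satisfying (1) therefore obeys $e_{2,2}+e_{2,\overline{3}}<2e_{2,3}$, completing the contradiction.
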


\begin{proof}
\begin{enumerate}
\item As before by computing each weight space in $V^{\bd}(\varpi_i), i= 1,2,3$, we obtain $\bd \in \mathcal{S}_{\lm}$. By comparing the basis arising from the monomiality of the defining ideals $I^{\bd}(\varpi_i)$ with the basis obtain in [\emph{loc.cit.}], we get for $i=1,2,3$: 
$$S_{\tt B_3}({\varpi_i}) = \{ \bs \in \N^{\Delta_+} \mid f^{\bs}\cdot v_{\varpi_i}^{\bd} \neq 0 \mbox{ in } V^\bd(\varpi_i)\}.
$$
For any $\lambda,\mu \in \mathcal{P}_+$, we have: 
$$P_{\tt B_3}(\lambda) + P_{\tt B_3}(\mu) = P_{\tt B_3}(\lambda+\mu),\ \ S_{\tt B_3}(\lambda) + S_{\tt B_3}(\mu) = S_{\tt B_3}(\lambda+\mu)$$ and $\dim V(\lambda) = \# S_{\tt B_3}(\lambda)$. By Theorem \ref{Thm:Minkow}, $\bold{d}\in\mathcal{S}_{\gm}$. 

\item Let $\bold{e} \in \mathcal{S}_{\gm}$. From reading the lattice points in $P_{\tt B_3}(\varpi_2)$, we get: $f_{1,2}f_{1,\overline{3}}\cdot v^{\bold{e}}_{\varpi_2} \neq 0$ in $V^{\bold{e}}(\varpi_2)$. Since the corresponding weight space is one-dimensional and $f_{1,3}^2$ has the same weight, $f_{1,3}^2\cdot v^{\bold{e}}_{\varpi_2} = 0$.
\par
Assume $\underline{w}_0\in R(w_0)$ such that $\bold{e} \in \Dwq$. Let $<$ be the induced convex order on $\Delta_+$. Without loss of generality we suppose that $\alpha_{1,2}< \alpha_{1,\overline{3}}$. 
\begin{enumerate}[\textbf{Case} 1]
\item Assume $\alpha_{1,2}<\alpha_{1,3}< \alpha_{1,\overline{3}}$, for the quantum degree cone $\Dwq$, by computing the L-S formula explicitly, this would imply the following inequality:  $d_{1,2} + d_{1,\overline{3}} > 2 d_{1,3}$.
This implies, turning to the classical case, $f_{1,3}^2\cdot v_{\varpi_2}^{\bold{e}} \neq 0$, which is a contradiction.
\item Assume $\alpha_{1,3}<\alpha_{1,2}<\alpha_{1,\overline{3}}$. Consider the root $\alpha_{3,3}$: by the convexity, it must be simultaneously larger than $\alpha_{1,\overline{3}}$ and smaller than $\alpha_{1,2}$. This is a contradiction.
\item Assume $\alpha_{1,2}<\alpha_{1,\overline{3}}<\alpha_{1,3}$, with similar arguments as in Case 2 we get a contradiction.
\end{enumerate}
As a conclusion, for any $\underline{w}_0\in R(w_0)$, $\bold{e} \notin \Dwq$.
\end{enumerate}
\end{proof}

\subsection{Global monomial set: $\tt G_2$}
Let $\lie g$ be of type $\tt G_2$. We use the notations in Section \ref{Sec:G2}. Consider the following $\bold{d}\in\mathcal{D}$:
$$d_{1} = 2,\ \ d_{1112} = 1,\ \ d_{112} = 3,\ \ d_{11122} = 1,\ \ d_{12} = 3,\ \ d_2 = 2.$$ 
It is clear that $\bd \in \mathcal{D}$.
\par
Let $P_{\tt G_2}(\lambda)$ be the polytope defined in \cite[Section 1]{Gor1}, the set of its lattice points will be denoted by $S_{\tt G_2}(\lambda)$. With similar arguments and calculations as before we obtain the first statement of the following theorem. The second statement follows from Section \ref{Sec:G2}, there we show, for each $\mathbf{e} \in \mathcal{D}^q$ there exist a unique monomial basis of $V^{\mathbf{e}}(\varpi_i), i=1,2$, i.e. $\mathbf{e} \in \ms_{\lm}$, which does not coincide with the basis in (1) of the following theorem.

\begin{theorem}\label{Th:gmG2} 
\begin{enumerate}
\item We have $\bold{d}\in\ms_{\gm}$, and the set $\{f^\bold{s}\cdot v_\lambda^\bold{d}\mid \bold{s}\in S_{\tt G_2}(\lambda)\}$ forms a monomial basis of $V^\bold{d}(\lambda)$.\par
\item For all $\bd \in \md$ satisfying (1), we have $\bd \notin \mathcal{D}^q$.
\end{enumerate}
\end{theorem}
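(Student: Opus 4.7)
For part (1), the plan is to follow the same template used earlier for types $\tt C_n$, $\tt D_4$ and $\tt B_3$. First I would verify directly that $\bd \in \ms_{\lm}$ by going through the two fundamental modules $V(\varpi_1)$ (the $7$-dimensional representation) and $V(\varpi_2)$ (the $14$-dimensional adjoint representation): in each weight space I list the PBW monomials of the correct weight, compute their $\bd$-degrees with the six values above, and apply Corollary~\ref{Cor:useful}. The values have been chosen so that in every multi-dimensional weight space the degrees of distinct PBW monomials are pairwise distinct, which gives monomiality of $I^\bd(\varpi_i)$ for $i=1,2$. I would then compare the resulting sets
\[
\{\bs \in \N^{\Delta_+} \mid f^{\bs}\cdot v_{\varpi_i}^{\bd} \neq 0 \text{ in } V^\bd(\varpi_i)\}
\]
with the lattice points $S_{\tt G_2}(\varpi_i)$ of Gornitskii's polytope from \cite{Gor1}, obtaining equality in both cases by a direct enumeration.

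To upgrade from fundamentals to all dominant weights, I would quote from \cite{Gor1} the two key properties: $P_{\tt G_2}(\lambda)+P_{\tt G_2}(\mu) = P_{\tt G_2}(\lambda+\mu)$ together with the Minkowski-sum compatibility $S_{\tt G_2}(\lambda)+S_{\tt G_2}(\mu)=S_{\tt G_2}(\lambda+\mu)$ on lattice points, and the equality $\dim V(\lambda) = \# S_{\tt G_2}(\lambda)$. Having $\# (S_{\tt G_2}(\varpi_1)^{+m_1}+S_{\tt G_2}(\varpi_2)^{+m_2}) = \dim V(m_1\varpi_1 + m_2\varpi_2)$ for all $(m_1,m_2)$, Theorem~\ref{Thm:Minkow} applies verbatim and yields both $\bd \in \ms_{\gm}$ and the monomial basis parametrization by $S_{\tt G_2}(\lambda)$.

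For part (2), the strategy is to pin down an equality forced on any such $\bd$ and then show it is incompatible with every quantum degree cone $\Dwq$. The hypothesis is that $\bd \in \md$ satisfies (1); from the explicit description of $S_{\tt G_2}(\varpi_i)$ I would identify a one-dimensional weight space of some $V(\varpi_i)$ in which the lattice-point monomial is not a power of a single root vector, while at the same time there is a competing PBW monomial of identical weight (a power of another long or short root vector). Monomiality of $I^\bd(\varpi_i)$ then forces a precise inequality between $\bd$-values, analogous to the inequality $d_{1,2}+d_{1,\overline 3}<2 d_{1,3}$ used in the $\tt B_3$ proof. The main obstacle is then to show that no reduced decomposition $\underline{w}_0\in R(w_0)$ is compatible with this inequality.

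This last step is where I expect to do the real work, and I would reduce it to a finite case check using Section~\ref{Sec:G2}. Since $\tt G_2$ has rank $2$, Proposition~\ref{Prop:UniqueCone} tells us that there is only one quantum degree cone $\md^q=\Dwq$ regardless of the reduced word, so the problem collapses to showing that the single cone $\md^q$ computed in Section~\ref{Sec:G2} does not meet the inequality extracted above. I would therefore write down the defining inequalities of $\md^q$ for $\tt G_2$ in the coordinates $(d_1,d_{1112},d_{112},d_{11122},d_{12},d_2)$, argue that at least one of them forces $\bd_{1112}+\bd_{12}\leq$ (some integer combination) in the opposite direction to the inequality forced by condition (1), and thereby obtain the desired contradiction. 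The core difficulty is combinatorial: isolating the right weight space in a fundamental representation where the Gornitskii monomial and a ``quantum-preferred'' monomial genuinely compete, and then verifying that all convex orders on $\Delta_+$ give the wrong sign on that competition.
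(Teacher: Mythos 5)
Your proposal is correct and follows essentially the same route as the paper: part (1) is the standard pipeline (verify $\bd\in\ms_{\lm}$ via Corollary~\ref{Cor:useful}, match the fundamental bases with $S_{\tt G_2}(\varpi_i)$ from \cite{Gor1}, then invoke the Minkowski-sum property and Theorem~\ref{Thm:Minkow}), and part (2) is the contrapositive of the paper's observation that every $\mathbf{e}\in\md^q=\Dwq$ lies in $\ms_{\lm}$ but yields a monomial basis different from the Gornitskii one. Your concrete competition does exist as you anticipate (e.g.\ $f_{1112}f_{12}$ versus $f_{112}^2$ in the one-dimensional weight space of weight $-2(2\alpha_1+\alpha_2)$ below the highest weight, where (1) forces $d_{1112}+d_{12}<2d_{112}$ against the cone inequality $d_{1112}+d_{12}>2d_{112}$ of \eqref{Eq:G2}), and Proposition~\ref{Prop:UniqueCone} correctly collapses the check to a single cone.
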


\subsection{Local monomial sets: \texorpdfstring{$\tt G_2$}{G2}}\label{LMS:G2}
Let $\lie g$ be of type $\tt G_2$. By Proposition \ref{Prop:UniqueCone}, the quantum degree cone $\Dwq$ does not depend on the choice of $\underline{w}_0\in R(w_0)$. Let 
$$\bold{d}=(d_1,d_{1112},d_{112},d_{11122},d_{12},d_{2})=(2,2,1,2,2,5).$$ 
We will show in Section \ref{Sec:G2} that $\bold{d}\in \Dwq$. Let $f_1, f_{1112}, f_{112}, f_{11122}, f_{12}, f_{2}$ be the PBW root vectors in $\lie n^-$.

\begin{proposition}\label{Lem:MIG2}
We have $\bold{d}\in \mathcal{S}_{\lm}$, i.e., the defining ideals $I^{\bf d}(\varpi_1)$ and $I^{\bf d}(\varpi_2)$ are monomial.
\end{proposition}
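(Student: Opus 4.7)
The plan is to verify the hypothesis of Corollary~\ref{Cor:useful} for each fundamental representation $V(\varpi_1)$ and $V(\varpi_2)$. Recall that for $\tt G_2$ we have $\Delta_+ = \{\alpha_1, \alpha_2, \alpha_1+\alpha_2, 2\alpha_1+\alpha_2, 3\alpha_1+\alpha_2, 3\alpha_1+2\alpha_2\}$ corresponding to the PBW generators $f_1, f_2, f_{12}, f_{112}, f_{1112}, f_{11122}$, and that $\dim V(\varpi_1) = 7$ while $\dim V(\varpi_2)=14$. So the proof reduces to a finite, weight-by-weight inspection.

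For $V(\varpi_1)$, the weight spaces are all one-dimensional: the six short-root weights together with the zero weight. For each weight $\mu$ with $V(\varpi_1)_\mu \neq 0$, I would first enumerate the finite set $S_\mu$ of tuples $\bs \in \mathbb{N}^{\Delta_+}$ with $f^{\bs}\cdot v_{\varpi_1}\neq 0$ and $\sum_\alpha s_\alpha \alpha = \varpi_1 - \mu$ (this is a very short list, cut down by the known representation theory of the $7$-dimensional $\tt G_2$-module), then compute $\deg_{\bd}(f^{\bs}) = 2s_1 + 5s_2 + s_{12} + 2s_{112} + 2s_{1112} + 2s_{11122}$ for each such $\bs$, and confirm that the minimum is attained exactly once. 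By Corollary~\ref{Cor:useful}(1), this yields that $I^{\bd}(\varpi_1)$ is monomial.

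For $V(\varpi_2)$, which is the $14$-dimensional adjoint representation, most weight spaces are one-dimensional (corresponding to the twelve roots), and exactly the zero weight space has multiplicity $2$. The one-dimensional weights are handled exactly as in the previous paragraph. For the zero weight one must verify condition~(2) of Corollary~\ref{Cor:useful}: enumerate all $\bs \in \mathbb{N}^{\Delta_+}$ with $\sum_\alpha s_\alpha \alpha = \varpi_2$ such that $f^{\bs}\cdot v_{\varpi_2}\neq 0$, compute their $\bd$-degrees, and check that at least two distinct values appear among them (so that the surviving representatives in $V^{\bd}(\varpi_2)_0$ have distinct degrees). The specific choice of $\bd$ with the large weight $d_2 = 5$ on the highest short root and the small weight $d_{112}=1$ is designed precisely to separate the degrees of monomials of the form $f_1^{a} f_2^{b} f_{12}^{c} f_{112}^{d} f_{1112}^{e} f_{11122}^{f}$ with the same total weight.

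The main obstacle is purely bookkeeping: one must correctly identify, for each weight $\mu$, which tuples $\bs$ give a non-zero vector $f^{\bs}\cdot v_{\varpi_i}$ (as opposed to merely a tuple with the right weight, which might act by zero for representation-theoretic reasons, e.g. $f_1^k \cdot v_{\varpi_1} = 0$ as soon as $k$ exceeds the length of the $\alpha_1$-string through $\varpi_1$). Once this list is compiled, the degree separation follows from the prime-like numerical pattern $(2,2,1,2,2,5)$, and no further algebraic input is needed. Since all verifications are finite and transparent, the proof is simply the resulting table of weights, monomials and degrees, and we therefore omit the explicit computations.
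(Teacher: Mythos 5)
Your approach is exactly the one the paper intends: the paper itself omits this proof, saying only that it is a routine weight-by-weight application of Corollary~\ref{Cor:useful}, and your reduction to a finite table over the weights of the $7$- and $14$-dimensional modules is that same strategy. Two slips should be corrected before the table is actually compiled. First, your displayed degree formula transposes two coordinates: with $\mathbf d=(d_1,d_{1112},d_{112},d_{11122},d_{12},d_2)=(2,2,1,2,2,5)$ the degree is $2s_1+5s_2+2s_{12}+s_{112}+2s_{1112}+2s_{11122}$, i.e.\ $d_{12}=2$ and $d_{112}=1$, not the other way around (your later remark that $d_{112}=1$ is the correct reading, so the formula is internally inconsistent with your prose). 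Second, for the two-dimensional zero weight space of $V(\varpi_2)$ the criterion you propose to check --- that ``at least two distinct values appear'' among the degrees --- is strictly weaker than Corollary~\ref{Cor:useful}(2), which requires \emph{all} the degrees of the relevant monomials to be pairwise distinct; having merely two distinct values would not by itself force the $5$-dimensional weight-$0$ component of the ideal to be spanned by monomials. Fortunately the stronger condition does hold here: the seven monomials of weight $3\alpha_1+2\alpha_2$, namely $f_{11122}$, $f_{12}f_{112}$, $f_1f_{12}^2$, $f_2f_{1112}$, $f_1f_2f_{112}$, $f_1^2f_2f_{12}$, $f_1^3f_2^2$, have $\mathbf d$-degrees $2,3,6,7,8,11,16$, which are all distinct, so Corollary~\ref{Cor:useful}(2) applies to whichever subset acts nontrivially on $v_{\varpi_2}$. (A cosmetic point: $\alpha_2$ is the long simple root, so $d_2=5$ is not assigned to the highest short root.) With these corrections your outline matches the paper's intended, omitted, verification.
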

We omit the proof as before.
\par
We want to examine that this degree function is quite interesting, due to the fact that the induced semigroup is not saturated as we will explain:
\par
Let $S(\varpi_2) = \{\bs \in \N^{\Delta_+} \mid f^{\bs}\cdot v^{\bd}_{\varpi_2} \neq 0\}$. We have by construction $\#S(\varpi_2) = \dim V(\varpi_2) = 14$, but there are $16$ lattice points in the convex hull $P = \mbox{conv}(S(\varpi_2))$.
\par
We fix the sequence of positive roots $(\alpha_1,\alpha_{1112},\alpha_{112},\alpha_{11122},\alpha_{12},\alpha_2)$ to identify $\mathbb{R}^{\Delta_+}$ and $\mathbb{R}^6$. Let $\bold{G_2^{\varpi_1}}(m_1) \subset \mathbb{R}^6$ be the polytope defined by the inequalities:
$$x_1,\ x_2,\ x_3,\ x_4,\ x_5,\ x_6\geq 0,\ \ x_1\leq m_1,\ \ x_6\leq 0,\ \ 2x_1+2x_2+x_3+2x_4+2x_5\leq 2m_1.$$

Let $\bold{G_2^{\varpi_2}}(m_2) \subset \mathbb{R}^6$ be the polytope defined by the inequalities:
$$x_1,\ x_2,\ x_3,\ x_4,\ x_5,\ x_6\geq 0,\ \ x_1\leq 0,\ \ 2x_2+x_3+x_4+x_5+2x_6\leq 2m_2.$$
Let $\{e_1,e_2,\ldots,e_6\}$ be the standard basis of $\mathbb{R}^6$.
\begin{conjecture}
Let $\lambda = m_1 \varpi_1 + m_2 \varpi_2 \in \mathcal{P}_+$. The number of lattice points in the Minkowski sum $$m_1 \bold{G_2^{\varpi_1}}(1) + m_2 \left(\bold{G_2^{\varpi_2}}(1) \cup \{3e_3,3e_5\}\right)$$ coincides with $\dim V(m_1 \varpi_1 + m_2\varpi_2)$.
\end{conjecture}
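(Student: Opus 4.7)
The plan is to apply Theorem~\ref{Thm:Minkow}. By Proposition~\ref{Lem:MIG2} we already have $\bd \in \ms_{\lm}$, so it suffices to prove
$$\#\!\left(S^{\bd}(\varpi_1)^{+m_1} + S^{\bd}(\varpi_2)^{+m_2}\right) = \dim V(m_1\varpi_1 + m_2\varpi_2)$$
for all $m_1, m_2 \geq 0$; the conjecture then follows once the sets $S^{\bd}(\varpi_i)$ are identified with those appearing in the stated Minkowski sum, and as a bonus one obtains $\bd \in \ms_{\gm}$.

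First I would identify $S^{\bd}(\varpi_1)$ and $S^{\bd}(\varpi_2)$ explicitly. For each weight $\mu$ appearing in $V(\varpi_i)$, one enumerates the finitely many monomials $f^{\bs}$ of weight $\varpi_i - \mu$ and applies Lemma~\ref{Lem:useful}: precisely the $\dim V(\varpi_i)_\mu$ monomials of smallest $\bd$-degree in each weight space survive in $V^{\bd}(\varpi_i)$. A direct verification inside the $7$-dimensional representation $V(\varpi_1)$ and the $14$-dimensional representation $V(\varpi_2)$ should yield
$$S^{\bd}(\varpi_1) = \bold{G_2^{\varpi_1}}(1) \cap \N^6, \qquad S^{\bd}(\varpi_2) = \bigl(\bold{G_2^{\varpi_2}}(1) \cap \N^6\bigr) \cup \{3e_3,\, 3e_5\}.$$

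Next, writing $A = \bold{G_2^{\varpi_2}}(1) \cap \N^6$, every element of the $m_2$-fold Minkowski sum of $S^{\bd}(\varpi_2)$ arises by choosing, in each of the $m_2$ summands, either a point of $A$, the point $3e_3$, or the point $3e_5$; this gives
$$m_2 \cdot S^{\bd}(\varpi_2) \;=\; \bigcup_{\substack{k,\ell\geq 0\\ k+\ell\leq m_2}} \bigl(\,3k\, e_3 + 3\ell\, e_5 + (m_2-k-\ell)\,A\,\bigr).$$
Combined with $m_1 \bold{G_2^{\varpi_1}}(1) \cap \N^6$ (for which one needs the auxiliary fact that $\bold{G_2^{\varpi_1}}(1)$ is a normal lattice polytope, so its $m_1$-fold Minkowski sum with itself coincides with the lattice points of its $m_1$-dilation), the target set decomposes as an explicit union of lattice points of translated lattice polytopes indexed by the pair $(k,\ell)$.

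The hard part will be the cardinality count: the union above is not disjoint, so one must perform an inclusion-exclusion over $(k,\ell)$ to correct for overlaps. I would identify the overlap loci combinatorially by comparing the defining inequalities of the translated polytopes, expressing the correction terms as Ehrhart polynomials of a small family of rational polytopes, and then summing to obtain a polynomial in $(m_1, m_2)$. The final step is to match this polynomial against the Weyl dimension formula
$$\dim V(m_1\varpi_1 + m_2\varpi_2) = \tfrac{1}{120}(m_1{+}1)(m_2{+}1)(m_1{+}m_2{+}2)(m_1{+}2m_2{+}3)(m_1{+}3m_2{+}4)(2m_1{+}3m_2{+}5).$$
Since both sides are polynomials of degree six in $(m_1, m_2)$, the verification can be carried out symbolically or reduced to checking sufficiently many small cases together with the degree bound. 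A cleaner alternative, which I would pursue in parallel, is to construct a direct bijection between the Minkowski sum and a Gelfand--Tsetlin-type pattern set for $\tt G_2$ (e.g.\ the lattice points of $P_{\tt G_2}(\lambda)$ from \cite{Gor1}) by a weight-preserving bending of the two isolated points $3e_3, 3e_5$ onto the convex region; the non-convex nature of $S^{\bd}(\varpi_2)$ makes the construction of such a bijection the technical heart of the argument.
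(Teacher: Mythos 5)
First, note that the statement you are proving is stated in the paper only as a \emph{conjecture}: the authors give no proof of it, and indeed the surrounding discussion (the non-saturatedness of the monoid $\Gamma$, the two isolated points $3e_3,3e_5$ outside the convex hull of $S(\varpi_2)$) is precisely the reason they could not run their usual Minkowski-sum machinery here. So there is no paper proof to compare against, and the question is whether your proposal actually closes the gap. It does not: it is a strategy outline in which every substantive step is deferred. The identification $S^{\bd}(\varpi_1)=\bold{G_2^{\varpi_1}}(1)\cap\N^6$ and $S^{\bd}(\varpi_2)=(\bold{G_2^{\varpi_2}}(1)\cap\N^6)\cup\{3e_3,3e_5\}$ is only asserted (``should yield''), and the cardinality count --- which \emph{is} the conjecture, since Theorem~\ref{Thm:Minkow} converts that count into $\bd\in\ms_{\gm}$ rather than the other way around --- is left entirely to a future inclusion--exclusion or bijection that you acknowledge is ``the technical heart of the argument.''

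The most serious concrete gap is your claim that the left-hand side is a polynomial of degree six in $(m_1,m_2)$, so that the identity with the Weyl dimension formula could be checked on finitely many cases. Because $S^{\bd}(\varpi_2)$ is not the lattice-point set of a polytope, its $m_2$-fold Minkowski sum is a union of $O(m_2^2)$ translates $3ke_3+3\ell e_5+(m_2-k-\ell)A$ indexed by $(k,\ell)$, and these translates overlap in a way that itself depends on $(k,\ell,m_1,m_2)$. Ehrhart theory gives you (quasi-)polynomiality of each individual term, but the number of terms and the overlap pattern grow with $m_2$, so polynomiality of the total count is exactly what must be proved, not assumed; without it the ``check finitely many cases'' step is unjustified. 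Until either the inclusion--exclusion is carried out explicitly or a weight-preserving bijection onto $S_{\tt G_2}(\lambda)$ is constructed, the statement remains, as in the paper, a conjecture.
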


\begin{remark} Note that the proof of Lemma \ref{Lem:MIG2} does not depend on the choice of $\mathbf{d} \in \Dwq$. Further we have $\mathcal{D}^q = \Dwq$ (see Proposition \ref{Prop:UniqueCone}). This implies the inclusion $\mathcal{D}^q \subset \mathcal{S}_{\lm}$.
\end{remark}

\begin{remark}
Let $G$ be the complex algebraic group of type $\tt G_2$ and $U$ (resp. $U^-$) be the maximal unipotent subgroup of $G$ having $\lie n^+$ (resp. $\lie n^-$) as Lie algebra.
\par
Let $S=(\alpha_1,\alpha_{1112},\alpha_{112},\alpha_{11122},\alpha_{12}, \alpha_2)$ be a birational sequence for $U^-$ (see \cite[Section 3]{FaFoL15} for definition). Using $S$ we identify $\mathbb{N}^{\Delta_+}$ and $\mathbb{N}^6$. We fix the integral weight function $\Psi:\Delta_+\ra\mathbb{N}$ by $\Psi=\bold{d}$ and the lexicographic order on $\mathbb{N}^6$. Let $>$ be the total order on $\mathbb{N}^6$ defined in \cite[Section 5]{FaFoL15} by combining $\Psi$ and the lexicographic order. In [\textit{loc.cit}], a monoid $\Gamma=\Gamma(S,>)\subset \mathcal{P}_\mathbb{R}\times\mathbb{N}^6$ is attached to $G/\hskip -3.3pt/U$ to study its toric degenerations. Let $\pi_1:\mathcal{P}_\mathbb{R}\times\mathbb{N}^6\ra\mathcal{P}_\mathbb{R}$ and $\pi_2:\mathcal{P}_\mathbb{R}\times\mathbb{N}^6\ra \mathbb{N}^6$ be the canonical projections.
\par
We claim that $\Gamma$ is not saturated: first notice that $\pi_2\circ\pi_1^{-1}(\varpi_2)=S(\varpi_2)$. Pick a lattice point $\bold{q}\in P$ which is not in $S(\varpi_2)$. Since $q\in\textrm{conv}(S(\varpi_2))$, there exists $s_1,\ldots,s_m\in\mathbb{Q}$ and $\bold{p}_1,\ldots,\bold{p}_m\in S(\varpi_2)$ such that $s_1+\ldots+s_m=1$ and $\bold{q}=s_1\bold{p}_1+\ldots+s_m\bold{p}_m$. Multiplying both sides by the least common multiple $M$ of the denominators of $s_1,\ldots,s_m$, we know that $(M\varpi_2,M\bold{q})\in \Gamma$ as $\Gamma$ is a monoid. If $\Gamma$ were saturated, $(M\varpi_2,M\bold{q})\in \Gamma$ will imply $(\varpi_2,\bold{q})\in\Gamma$, contradicts to $\pi_2\circ\pi_1^{-1}(\varpi_2)=S(\varpi_2)$.
\par
This example explains that the saturated assumption in \cite{FaFoL15} is necessary.
\end{remark}

\section{Various quantum degree cones}\label{sec6}

\subsection{Lie algebra $\tt G_2$}\label{Sec:G2}

Let $\lie g$ be the Lie algebra of type $\tt G_2$ with positive roots 
$$\Delta_+=\{\alpha_1,\alpha_2,\alpha_1+\alpha_2,2\alpha_1+\alpha_2,3\alpha_1+\alpha_2,3\alpha_1+2\alpha_2\}.$$
For $\bold{d}\in \mathbb{R}^{\Delta_+}_{\geq 0}$, we write $d_1=\bold{d}(\alpha_1)$, $d_2=\bold{d}(\alpha_2)$, $d_{12}=\bold{d}(\alpha_1+\alpha_2)$, $d_{112}=\bold{d}(2\alpha_1+\alpha_2)$, $d_{1112}=\bold{d}(3\alpha_1+\alpha_2)$ and $d_{11122}=\bold{d}(3\alpha_1+2\alpha_2)$. 
The classical degree cone $\md\subset \mathbb{R}^{\Delta_+}_{\geq 0}$ is determined by the following inequalities:
$$ d_1 + d_2 > d_{12},\ \ d_{1} + d_{12} > d_{112}, \ \ d_{1} + d_{112} > d_{1112}, $$
$$ d_2 + d_{1112} > d_{11122},\ \ d_{112} + d_{12} > d_{11122}.$$
For example $(d_{1}, d_{1112},d_{112},d_{11122},d_{12},d_2)=(2,1,3,1,3,2) \in \mathcal{D}$. 
\par
We fix a reduced decomposition $\underline{w}_0=s_1s_2s_1s_2s_1s_2\in R(w_0)$. Let 
$$F_{1},\ \ F_{1112},\ \ F_{112},\ \ F_{11122},\ \ F_{12},\ \ F_2$$ 
be the corresponding quantum PBW root vectors. 
The quantum degree cone $\Dwq$ in $\mathcal{D}$ is given by the following inequalities:
\begin{equation}\label{Eq:G2}
\begin{aligned}
d_1+d_{11122}>2d_{112},\ \ d_{1112} + d_{11122} > 3d_{112},\ \ d_{1112} + d_{12} > 2d_{112},\\
d_{1112} + d_{2} > d_{112} + d_{12},\ \ d_{112} + d_{2} > 2d_{12},\ \ d_{11122} + d_{2} > 3 d_{12}.
\end{aligned}
\end{equation}
These inequalities are the same for the other reduced decomposition $\underline{w}_0'=s_2s_1s_2s_1s_2s_1\in R(w_0)$. It is clear that $\bold{d}\in\mathcal{D}_{\underline{w}_0}^q$ for any $\underline{w}_0\in R(w_0)$.

\subsection{Lie algebra $\tt A_3$}\label{Sec:A3}
Let $\g$ be of type $\tt A_3$. For $\bold{d}\in\mathbb{R}_{\geq 0}^{\Delta_+}$, we write $d_{i,j}=\bold{d}(\alpha_{i,j})$. The classical degree cone $\mathcal{D}\subset\mathbb{R}_{\geq 0}^{\Delta_+}$ is defined by the following inequalities:
$$d_{1,1} + d_{2,2} > d_{1,2},\ \ d_{2,2} + d_{3,3} > d_{2,3},\ \ d_{1,1} + d_{2,3} > d_{1,3},\ \ d_{1,2} + d_{3,3} > d_{1,3}, $$

Let $\underline{w}^1_0 = s_1s_2s_1s_3s_2s_1$ and $\underline{w}^2_0 = s_1s_3s_2s_3s_1s_2\in R(w_0)$. We claim that the corresponding quantum degree cones satisfy $\mathcal{D}^q_{\!\!\underline{w}^1_0} \cap \mathcal{D}^q_{\!\!\underline{w}^2_0} = \emptyset$.
\par
Let $F_{i,j}$ (resp. $F_{i,j}'$) denote the quantum PBW root vector associated to $\underline{w}_0^1$ (resp. $\underline{w}_0^2$) and root $\alpha_{i,j}$. We have the following commutation relations between the quantum PBW root vectors:
$$F_{1,2} F_{2,3} = F_{2,3} F_{1,2} + (q-q^{-1}) F_{2,2} F_{1,3},\ \ F'_{1,3} F'_{2,2} = F'_{2,2} F'_{1,3} + (q-q^{-1}) F'_{1,2}F'_{2,3},$$
giving two contradict inequalities in the corresponding quantum degree cones:
$$d_{1,2} + d_{2,3} > d_{2,2} + d_{1,3},\ \ d_{1,3} + d_{2,2} > d_{1,2} + d_{2,3}.$$
Projecting to the corresponding coordinates proves the claim.

\subsection{Lie algebra $\tt B_3$}\label{Sec:B3}
Let $\g$ be of type $\tt B_3$. The set of positive roots 
$$\Delta_+=\{\alpha_{1,1},\alpha_{1,2},\alpha_{1,3},\alpha_{2,2},\alpha_{2,3},\alpha_{3,3}, \alpha_{1,\overline{3}}, \alpha_{2,\overline{3}}, \alpha_{1,\overline{2}}\}.$$ 
For $\bold{d}\in\mathbb{R}_{\geq 0}^{\Delta_+}$, we write $d_{i,j}=\bold{d}(\alpha_{i,j})$ and $d_{i,\overline{j}}=\bold{d}(\alpha_{i,\overline{j}})$. The classical degree cone $\mathcal{D}$ is determined by:
$$d_{1,1} + d_{2,2} > d_{1,2},\ \ d_{1,1} + d_{2,3} > d_{1,3},\ \ d_{1,1} + d_{2,\overline{3}} > d_{1,\overline{3}},\ \ d_{1,2} + d_{3,3} > d_{1,3},$$
$$d_{1,2} + d_{2,\overline{3}} > d_{1,\overline{2}},\ \  d_{2,2} + d_{3,3} > d_{2,3},\ \ d_{2,2} + d_{1,\overline{3}} > d_{1,\overline{2}},$$
$$d_{1,3} + d_{2,3} > d_{1,\overline{2}},\ \ d_{1,3} + d_{3,3} > d_{1,\overline{3}},\ \ d_{2,3} + d_{3,3} > d_{2,\overline{3}}.$$
For example $\bd = (4,3,3,3,1,1,4,3,2) \in \mathcal{D}$.
\par
We consider 
$$\underline{w}_0^1 = s_1s_2s_1s_3s_2s_1s_3s_2s_3\ \ \text{and}\ \ \underline{w}^2_0 = s_1s_3s_2s_3s_2s_1s_2s_3s_2\in R(w_0).$$
The quantum degree cone $\mathcal{D}_{\!\!\underline{w}^1_0}^q$ in $\mathcal{D}$ is defined by the following inequalities:
$$d_{1,1} + d_{1,\overline{2}} > 2d_{1,3},\ \ d_{1,2} + d_{1,\overline{2}} > d_{2,2} + 2d_{1,3},\ \ d_{1,2} + d_{1,\overline{3}} > 2d_{1,3},\ \ d_{1,2} + d_{2,\overline{3}} > d_{2,2} + d_{1,\overline{3}},$$
$$d_{1,2} + d_{2,\overline{3}} > d_{1,3} + d_{1,2},\ \ d_{2,2} + d_{2,\overline{3}} > 2 d_{2,3},\ \ d_{1,3} + d_{2,\overline{3}} > d_{1,\overline{3}} + d_{2,3}, $$ 
$$d_{1,\overline{2}} + d_{2,\overline{3}} > d_{1,\overline{3}} + 2d_{2,3},\ \ d_{1,\overline{2}} + d_{3,3} > d_{1,\overline{3}} + d_{2,3},\ \ d_{1,2} + d_{2,3} > d_{1,3} + d_{2,2}.$$
The quantum degree cone $\mathcal{D}_{\!\!\underline{w}^2_0}^q$ in $\mathcal{D}$ is defined by the following inequalities:
$$d_{1,1} + d_{1,\overline{2}} > 2d_{1,3},\ \ d_{1,1} + d_{1,\overline{2}} > d_{1,2} + d_{1,\overline{3}},\ \ d_{1,2} + d_{1,\overline{3}} > 2d_{1,3},\ \ d_{1,\overline{3}} + d_{2,3} > d_{1,3} + d_{2,\overline{3}},$$
$$ d_{1,\overline{3}} + d_{2,2} > d_{1,3} + d_{2,3},\ \ d_{1,\overline{3}} + d_{2,2} > d_{1,2} + d_{2,\overline{3}},\ \ d_{1,3} + d_{2,3} > d_{1,2} + d_{2,\overline{3}}, $$
$$d_{2,\overline{3}} + d_{2,2} > 2d_{2,3},\ \ d_{1,3} + d_{2,2} > d_{1,2} + d_{2,3}.$$
By the contradiction of the last inequalities, we obtain $\mathcal{D}^q_{\!\!\underline{w}^1_0} \cap \mathcal{D}^q_{\!\!\underline{w}^2_0} = \emptyset$.

\subsection{Lie algebra $\tt C_3$}\label{Sec:C3}

Let $\lie g$ be of type $\tt C_3$. For $\bold{d}\in\mathbb{R}_{\geq 0}^{\Delta_+}$, we write $d_1=\bold{d}(\alpha_{1,1})$, $d_2=\bold{d}(\alpha_{1,2})$, $d_3=\bold{d}(\alpha_{1,\overline{1}})$, $d_4=\bold{d}(\alpha_{1,3})$, $d_5=\bold{d}(\alpha_{1,\overline{2}})$, $d_6=\bold{d}(\alpha_{2,2})$, $d_7=\bold{d}(\alpha_{2,\overline{2}})$, 
$d_8=\bold{d}(\alpha_{2,3})$ and $d_9=\bold{d}(\alpha_{3,3})$.
The classical degree cone $\mathcal{D}\subset \mathbb{R}_{\geq 0}^{\Delta_+}$ is determined by:
$$d_1 + d_5 > d_3,\ \ d_1 + d_6 > d_2,\ \ d_1 + d_7 > d_5,\ \ d_1 + d_8 > d_4,\ \ d_2 + d_4 > d_3,$$
$$d_2 + d_8 > d_5,\ \ d_2 + d_9 > d_4,\ \ d_4+d_6 > d_5,\ \ d_6 + d_8 > d_7,\ \ d_6+d_9 > d_8.$$
We consider the reduced decompositions 
$$\underline{w}^1_0=s_1s_2s_3s_2s_1s_2s_3s_2s_3\ \ \text{and}\ \ \underline{w}^2_0=s_1s_3s_2s_3s_2s_1s_2s_3s_2.$$

Moreover, the inequalities determining the cone $\mathcal{D}^q_{\!\!\underline{w}^1_0}$ in $\mathcal{D}$ are:
$$d_1 + d_5 > d_2 + d_4,\ \ d_3+d_9 > 2d_4,\ \ d_7 + d_9 > 2d_8,\ \ d_3 + d_7 > 2d_5,$$
$$d_1 + d_7 > d_4 + d_6,\ \ d_2+d_7 > d_5 + d_6,\ \ d_2 + d_7 > d_4 + 2d_6,\ \ d_3+d_7 > d_4+d_5+d_6,$$
$$d_3+d_7> 2d_4+2d_6,\ \ d_3+d_8 > d_4 + d_5,\ \ d_3+d_8 >2d_4 + d_6,\ \ d_2 + d_8 > d_4 + d_6. $$

The inequalities determining the cone $\mathcal{D}^q_{\!\!\underline{w}^2_0}$ in $\mathcal{D}$ are:
$$d_1 + d_5 > d_2 + d_4,\ \ d_3 + d_9 > 2d_4,\ \ d_7 + d_9 > 2d_8,\ \ d_3 + d_7 > 2d_5,$$
$$d_1 + d_7 > d_2 + d_8,\ \ d_4 + d_7 > d_5 + d_8,\ \ d_3 + d_7 > d_2 + d_5 + d_8 ,\ \ d_3 + d_7 > 2d_2 + 2d_8 ,,$$
$$d_3 + d_6 > d_2 + d_5,\ \ d_3 + d_6 > 2d_2 + d_8,\ \ d_4 + d_7 > d_2 + d_8,\ \ d_4 + d_6 > d_2 + d_8.$$
Notice that there is a contradiction in the last inequalities, implying that $\mathcal{D}^q_{\!\!\underline{w}^1_0} \cap \mathcal{D}^q_{\!\!\underline{w}^2_0} = \emptyset$.

There are four elements in $\mathcal{D}^q_{\!\!\underline{w}^1_0 }$, which are minimal regarding the sum over all entries:
$$ \mathbf{d}_1 = (2,1,1,1,1,1,4,4,5),\ \mathbf{d}_2 = (3,2,2,1,1,1,3,3,4),$$
$$ \mathbf{d}_3 = (5,4,4,1,1,1,1,1,2),\ \mathbf{d}_4 = (4,3,3,1,1,1,2,2,3).$$
Since $\mathbf{d}_1, \mathbf{d}_2, \mathbf{d}_3 ,\mathbf{d}_4 \in \mathcal{D}$ we go back to the classical case. We consider the fundamental module $V(\varpi_2)$ and the weight 
$\tau = 2\alpha_1 + 3\alpha_2 + \alpha_3$
 whose weight space $V(\varpi_2)_{\varpi_2 - \tau}$ is of dimension 1. We have to choose an element with minimal degree from the following set, where we neglect the elements which have obviously a higher degree:
$$\{f_{1,2}f_{1,\overline{2}}, f_{1,\overline{1}}f_{2,2}\}.$$
For each of the above elements in $\mathcal{D}^q_{\underline{w}_0}$ both monomials have the same degree, so we do not obtain a monomial ideal $I^{\bd_i}$, $1 \leq i \leq 4$.\par
By taking larger degrees $\bd \in \Dwq$ it is possible to obtain a unique monomial basis of $V^{\bd}(\varpi_2)$, where it is possible to obtain a basis with either of both monomials applied to $v^{\bd}_{\varpi_2}$. We conclude $\mathcal{D}^q_{\underline{w}_0} \nsubseteq \mathcal{S}_{\lm}$, but $\mathcal{D}^q_{\underline{w}_0} \cap \mathcal{S}_{\lm} \neq \emptyset$. We also see, different elements in $\mathcal{D}^q_{\underline{w}_0 }$ can produce different monomial bases. This observation still holds, even if we consider elements where the sum over the entries is the same.\par

\subsection{Lie algebra $\tt D_4$}\label{Sec:D4}
Let $\lie g$ be of type $\tt D_4$. In the Dynkin diagram we let $2$ be the central node. We consider the following reduced decomposition 
$$\underline{w}_0 = s_2s_1s_2s_3s_2s_4s_2s_1s_2s_3s_2s_4\in R(w_0)$$
For a positive root $a \alpha_1 + b \alpha_2 + c \alpha_3 + d \alpha_4\in\Delta_+$, we let $f_{abcd}$ denote the corresponding quantum PBW root vector. In the convex order on positive roots given by $\underline{w}_0$, they are:
$$f_{0100},\ \ f_{1100},\ \ f_{1000},\ \ f_{1110}, \ \ f_{0110},\ \ f_{1211},\ \ f_{1101},\ \ f_{1111},\ \ f_{0010},\ \ f_{0111},\ \ f_{0101},\ \ f_{0001}.$$
For $d\in\mathbb{R}_{\geq 0}^{\Delta_+}$, let $d_i$ be the value of $\bold{d}$ at the positive root corresponding to the $i$-th quantum PBW root vector above. The quantum degree cone $\Dwq\subset \mathbb{R}_{\geq 0}^{\Delta_+}$ is defined by:
$$d_1 + d_3 > d_2,\ \ d_1 + d_8 > d_5 + d_7,\ \ d_1 + d_8 > d_6,\ \ d_1 + d_9 > d_5,\ \ d_1 + d_{12} > d_{11}$$
$$d_2 + d_8 > d_3+d_5+d_7 ,\ \ d_2 + d_8 > d_3 + d_6 ,\ \ d_2 + d_8 > d_4 + d_7 ,\ \ d_2 + d_9 > d_3 + d_5,$$
$$d_2 + d_9 > d_4,\ \ d_2 + d_{10} > d_6,\ \ d_2 + d_{12} > d_3 + d_{11},\ \ d_2 + d_{12} > d_7$$
$$d_3 + d_5 > d_4,\ \ d_3 + d_{10} > d_7 + d_9 ,\ \ d_3 + d_{10} > d_8, \ \ d_3 + d_{11} > d_7   $$
$$ d_4 + d_{10} > d_5 + d_7 +d_9 ,\ \ d_4 + d_{10} > d_5 + d_8 ,\ \ d_4 + d_{10} > d_6 + d_9$$
$$ d_4 + d_{11} > d_5 + d_7 ,\ \ d_4 + d_{11} > d_6 ,\ \ d_4 + d_{12} > d_8,\ \  d_5 + d_7 > d_6,$$
$$d_5 + d_{12} > d_9 + d_{11},\ \ d_5 + d_{12} > d_{10},\ \ d_6 + d_{12} > d_7 + d_9 + d_{11},\ \  d_6 + d_{12} > d_7 + d_{10}$$
$$d_6 + d_{12} > d_8 + d_{11},\ \ d_7 + d_9 > d_8,\ \ d_9 + d_{11} > d_{10}. $$
For example, $\bold{d} = (5,5,1,2,4,1,1,2,6,10,12,20)\in \Dwq$.


\end{document}